\setlist[itemize]{topsep=3pt}
\setlist[enumerate]{topsep=3pt}
\newtheorem{theorem}{Theorem}[section]
\newtheorem{lemma}[theorem]{Lemma}
\newtheorem{claim}{Claim}[theorem]
\newtheorem{cor}[theorem]{Corollary}
\newtheorem{prop}[theorem]{Proposition}
\newtheorem{remark}[theorem]{Remark}
\newtheorem{question}[theorem]{Question}
\theoremstyle{definition}
\newenvironment{subproof}[1][Proof]{\begin{proof}[#1]}{\end{proof}}
\newcommand{\mbn}{\mathbb{N}}
\newcommand{\mbz}{\mathbb{Z}}
\newcommand{\mcb}{\mathcal{B}}
\newcommand{\mcc}{\mathcal{C}}
\newcommand{\mcf}{\mathcal{F}}
\newcommand{\mch}{\mathcal{H}}
\newcommand{\mcl}{\mathcal{L}}
\newcommand{\mcp}{\mathcal{P}}
\newcommand{\mcq}{\mathcal{Q}}
\newcommand{\mct}{\mathcal{T}}
\title{A flat wall theorem for group-labelled graphs in the undirected model}
\author{Robin Thomas \and Youngho Yoo}
\begin{document}

\centerline{\Large \bf Packing cycles in undirected group-labelled graphs}

\bigskip
\bigskip

\centerline{{\bf Robin Thomas}%
\footnote{Partially supported by NSF under Grant No.~DMS-1700157.}
}
\smallskip
\centerline{and}
\smallskip
\centerline{{\bf Youngho Yoo}%
\footnote{This work was done while the second author was at Georgia Tech. Partially supported by the Natural Sciences and Engineering Research Council of Canada (NSERC), PGSD2-532637-2019. }
}
\bigskip
\centerline{$^1$School of Mathematics}
\centerline{Georgia Institute of Technology}
\centerline{Atlanta, Georgia  30332-0160, USA}
\bigskip
\centerline{$^2$Department of Mathematics}
\centerline{Texas A\&M University}
\centerline{College Station, Texas 77843-3368, USA}
\bigskip

\begin{abstract}
We prove a refinement of the flat wall theorem of Robertson and Seymour to undirected group-labelled graphs $(G,\gamma)$ where $\gamma$ assigns to each edge of an undirected graph $G$ an element of an abelian group $\Gamma$.
As a consequence, we prove that $\Gamma$-nonzero cycles (cycles whose edge labels sum to a non-identity element of $\Gamma$) satisfy the half-integral Erd\H{o}s-P\'osa property, and we also recover a result of Wollan that if $\Gamma$ has no element of order two, then $\Gamma$-nonzero cycles satisfy the Erd\H{o}s-P\'osa property.
As another application, we prove that if $m$ is an odd prime power, then cycles of length $\ell \mod m$ satisfy the Erd\H{o}s-P\'osa property for all integers $\ell$.
This partially answers a question of Dejter and Neumann-Lara from 1987 on characterizing all such integer pairs $(\ell,m)$.

\end{abstract}

\section{Introduction}
Erd\H{o}s and P\'osa showed in \cite{ErdPos} that cycles satisfy an approximate packing-covering duality; that is, there exists a function $f(k)=O(k\log k)$ such that in every graph, either there are $k$ vertex-disjoint cycles or there is a set of at most $f(k)$ vertices intersecting every cycle.
This result has generated extensive activity on whether various families of graphs satisfy a similar approximate duality, also known as the \emph{Erd\H{o}s-P\'osa property} (precise definitions are deferred to section \ref{sec:prelim}).
We refer to \cite{RayThi} for a recent survey on the Erd\H{o}s-P\'osa property and also to \cite{BruHeiJoo} for a collection of results on families of cycles and paths.

Group-labelled graphs provide a general framework in which many Erd\H{o}s-P\'osa problems can be studied simultaneously.
For instance, Huynh, Joos, and Wollan \cite{HuyJooWol} proved a structure theorem for \emph{directed} group-labelled graphs and obtained as special cases that the Erd\H{o}s-P\'osa property holds for the families of cycles, $S$-cycles (cycles intersecting some fixed vertex set $S$), $S_1$-$S_2$-cyles (cycles intersecting two fixed vertex sets $S_1$ and $S_2$), and cycles not homologous to zero in graphs embedded on an orientable surface.
Furthermore, their structure theorem describes the canonical obstructions to such cycles satisfying the Erd\H{o}s-P\'osa property, which in turn proves the \emph{half-integral Erd\H{o}s-P\'osa property} of more general families including odd cycles, odd $S$-cycles, and cycles not homologous to zero in graphs embedded on a nonorientable surface.

The framework of directed group-labelled graphs, however, does not seem to address an important class of graph families, namely of cycles and paths with modularity constraints with modulus greater than 2.
To this end, we consider \emph{undirected} group-labelled graphs as the main focus of this paper.

Let $\Gamma$ be an abelian group with additive operation and identity 0.
A \emph{$\Gamma$-labelled graph} is an ordered pair $(G,\gamma)$ where $G$ is an undirected graph and $\gamma:E(G) \to \Gamma$ is a \emph{$\Gamma$-labelling} of $G$.
The \emph{weight} of a subgraph $H$ of $G$ is defined as $\gamma(H):=\sum_{e \in E(H)}\gamma(e)$. 
We say that $H$ is \emph{$\Gamma$-zero} or \emph{$\Gamma$-nonzero} if $\gamma(H)=0$ or $\gamma(H)\neq 0$ respectively.

We remark that since we are mostly concerned with the weights of cycles and paths, it is possible to allow $\Gamma$ to be nonabelian and define the weight of a walk to be the ordered sum of the edge labels in the order of the walk.
However, under this definition, the weight of a cycle can depend on the choice of its starting vertex and direction of traversal, and reversing the direction of traversal can affect whether the cycle is $\Gamma$-nonzero (this is in contrast to \textit{directed} group-labelled graphs
 where reversing the direction of traversal simply inverts the weight of the cycle).
In this paper, we avoid this difficulty and only consider abelian groups.

In the context of Erd\H{o}s-P\'osa problems, studying cycles with constraints modulo $m$ is equivalent to studying cycles in $\mbz/m\mbz$-labelled graphs. Indeed, given a graph $G$, there is a natural $\mbz/m\mbz$-labelling $\gamma$ of $G$ defined by setting $\gamma(e)$ to be the congruence class of 1 modulo $m$ for all $e \in E(G)$.
Conversely, given a $\mbz/m\mbz$-labelled graph $(G,\gamma)$ there is a corresponding unlabelled graph $G'$ obtained from $G$ by replacing each edge $e \in E(G)$ with a path whose length is in the congruence class $\gamma(e)$.

Our main contribution is a structure theorem for undirected group-labelled graphs (Theorem \ref{flatwallundirectedtheorem}) analogous to that of Huynh, Joos, and Wollan (Theorem 22 in \cite{HuyJooWol}) in the directed setting.
As one consequence, we recover a result of Wollan \cite{WolCycle} that, if $\Gamma$ is an abelian group with no element of order two, then the family of $\Gamma$-nonzero cycles in undirected $\Gamma$-labelled graphs satisfies the Erd\H{o}s-P\'osa property.
In particular, if $m > 0$ is an odd integer, then the family of cycles of length $\not \equiv 0 \mod m$ satisfies the Erd\H{o}s-P\'osa property.
This also implies that $S$-cycles satisfy the Erd\H{o}s-P\'osa property (set $\Gamma = \mbz$, label an edge 1 if it is incident to $S$, and 0 otherwise).

If $\Gamma$ has an element $g$ of order two, then such a conclusion is not possible.
Consider the $n\times n$-grid with vertex set $[n]\times [n]$ (defined in section \ref{sectionwalls}) and label all edges 0. 
Add the edge $(1,i)(n,n-i+1)$ with label $g$ for each $i\in[k]$.
The resulting graph can be embedded on the projective plane so that a cycle is $\Gamma$-nonzero if and only if it is a one-sided closed curve.
This implies that there does not exist two disjoint $\Gamma$-nonzero cycles.
On the other hand, there does not exist a hitting set for $\Gamma$-nonzero cycles with less than $n$ vertices.
Indeed, a set $S$ of less than $n$ vertices is disjoint from a path $P_m$ for some $m\in [n]$, where for each $j\in[n]$, $P_j$ denotes the path in the grid induced by the vertex set $\{(j, i):i\in[n]\}$. 
Since $|S|<n$, there are $n_1$ disjoint paths from $P_1$ to $P_m$ and there are $n_2$ disjoint paths from $P_m$ to $P_n$ such that $n_1+n_2 \geq n+1$.
This yields a $\Gamma$-nonzero cycle disjoint from $S$, and therefore a hitting set for $\Gamma$-nonzero cycles has at least $n$ vertices.
Since the minimum size of a hitting set can be arbitrarily large in relation to the maximum size of a packing, this shows that the family of $\Gamma$-nonzero cycles does not satisfy the Erd\H{o}s-P\'osa property if $\Gamma$ has an element of order two.

Another consequence of Theorem \ref{flatwallundirectedtheorem} is that this projective planar grid is essentially the only obstruction for the Erd\H{o}s-P\'osa property of $\Gamma$-nonzero cycles (see section \ref{sec:nzcycles}).
In particular, if $\Gamma = \mbz/2\mbz$, then this (essentially) recovers a theorem of Reed (Theorem 1 in \cite{Ree99}) that every graph contains either $k$ disjoint odd cycles, a bounded hitting set for odd cycles, or a large \emph{Escher wall} which is a projective planar grid-like graph similar to this construction.
Also note that this projective planar grid contains a large \emph{half-integral packing} of $\Gamma$-nonzero cycles.
This leads to the following theorem.

\begin{restatable}{theorem}{nzcyclesEPtheoremres}
\label{nzcyclesEPtheorem}
Let $\Gamma$ be an abelian group.
Then the family of $\Gamma$-nonzero cycles satisfies the half-integral Erd\H{o}s-P\'osa property. 
Moreover, if $\Gamma$ has no element of order two, then the family of $\Gamma$-nonzero cycles satisfies the Erd\H{o}s-P\'osa property. 
\end{restatable}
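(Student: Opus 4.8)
The plan is to derive Theorem~\ref{nzcyclesEPtheorem} from the structure theorem, Theorem~\ref{flatwallundirectedtheorem}, via the now-standard combination of an \emph{irrelevant vertex} recursion with a \emph{bounded treewidth} base case, mirroring the way Huynh, Joos, and Wollan obtain their Erd\H{o}s--P\'osa results in the directed model. Fix $k$ and a $\Gamma$-labelled graph $(G,\gamma)$, and suppose for now only that $G$ has no $k$ pairwise vertex-disjoint $\Gamma$-nonzero cycles; we show that $\tau(G)$, the minimum size of a hitting set for $\Gamma$-nonzero cycles, is bounded by a function of $k$ alone, arguing by induction on $|V(G)|$. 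If the treewidth of $G$ is at most some threshold $w=w(k)$, we invoke the base case below. Otherwise $G$ contains a large wall, to which we apply Theorem~\ref{flatwallundirectedtheorem}; this returns one of a bounded list of outcomes: (i) $k$ disjoint $\Gamma$-nonzero cycles; (ii) a set $A$ of at most $h(k)$ vertices together with a large flat wall in $G-A$ that is \emph{homogeneous} for $\gamma$, in the sense that no $\Gamma$-nonzero routing can be made through it after a suitable shift of $\gamma$; or (iii) a large Escher-type projective-planar configuration as in the example preceding the statement.

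Outcome (i) contradicts the hypothesis. In outcome (ii), homogeneity of the wall lets us locate a vertex $v$ in its interior that is \emph{robustly irrelevant}: for every $W\subseteq V(G)\setminus\{v\}$, the graph $G-W$ contains a $\Gamma$-nonzero cycle if and only if $G-W-v$ does. Establishing this is the rerouting step: a $\Gamma$-nonzero cycle through $v$ traverses the flat region in a subpath that enters and leaves through a bounded bottleneck, and because the wall is large and $\gamma$-homogeneous this subpath can be pushed off $v$ without changing the cycle's weight, even if the cycle uses some of the at most $h(k)$ vertices of $A$. A short argument then shows that robust irrelevance of $v$ implies both $\nu(G)=\nu(G-v)$ and $\tau(G)=\tau(G-v)$, where $\nu$ denotes the maximum size of a packing: given a packing in $G$, reroute the unique cycle through $v$; given a hitting set of $G-v$, apply robust irrelevance with $W$ the hitting set. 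Hence we may delete $v$ and invoke the induction hypothesis, and crucially the set $A$ is never charged to the hitting set. Iterating, we reach the case $\mathrm{tw}(G)\le w(k)$, unless outcome (iii) intervenes along the way; deferring (iii), the base case supplies the desired bound.

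For the base case I would show, by dynamic programming over a tree decomposition (or by a direct argument specific to $\Gamma$-nonzero cycles), that $\Gamma$-nonzero cycles satisfy the Erd\H{o}s--P\'osa property in graphs of treewidth at most $w$, with a hitting-set bound depending only on $k$ and $w$ and, importantly, not on $\Gamma$, since only finitely many weight classes of cycles are relevant. Combining this with the recursion, the only remaining issue is outcome (iii). For the half-integral statement, assume $G$ has no half-integral packing of $k$ $\Gamma$-nonzero cycles; this implies the integral hypothesis used above, and moreover a sufficiently large Escher-type configuration contains a half-integral packing of size at least $k$ directly from its projective-planar grid structure, so outcome (iii) cannot occur and the recursion yields a (fully integral) hitting set of bounded size. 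For the second statement, assume $\Gamma$ has no element of order two; as explained in the example preceding Theorem~\ref{nzcyclesEPtheorem}, such a configuration can exist only if $\Gamma$ has an element of order two (a one-sided closed curve traversed twice is a $\Gamma$-zero closed walk), so again outcome (iii) is impossible, and the recursion yields a bounded integral hitting set, recovering Wollan's theorem.

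I expect the main obstacle to be the rerouting/irrelevant-vertex step in outcome (ii): one must reroute a $\Gamma$-nonzero cycle off an interior wall vertex while (a) preserving its weight class, which is where $\gamma$-homogeneity of the flat wall is essential and where the undirected setting genuinely differs from the directed one, and (b) coping with the apex vertices of $A$ that the cycle may use; this must be done together with the verification that robust irrelevance preserves both the packing and the covering number, so that the recursion terminates with a bound independent of the apex sizes encountered along the way.
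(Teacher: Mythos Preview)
Your plan misreads the outcomes of Theorem~\ref{flatwallundirectedtheorem}. None of its outcomes is a ``homogeneous flat wall through which no $\Gamma$-nonzero routing can be made''; outcomes (1), (2)(a), (2)(b) are precisely the opposite: a $\Gamma$-odd $K_t$-model, a facially $\Gamma$-odd flat wall, or a $\Gamma$-bipartite flat wall equipped with a pure $\Gamma$-odd linkage. Each of these is a structure \emph{rich} in nonzero cycles, and the paper uses them to extract $k$ disjoint (or $2k$ half-integrally disjoint, in the crossing-linkage subcase) nonzero cycles directly. Outcome (3) does not hand you a flat wall at all; it gives a set $Z$ with $|Z|\le h(r,t)$ such that the $\mct_W$-large 3-block of $(G-Z,\gamma)$ is $\Gamma$-bipartite. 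So there is no outcome to which an irrelevant-vertex/rerouting argument applies, and your recursion never gets off the ground.

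The paper's route is both different and much shorter: take a minimal counterexample $((G,\gamma),k)$ and use Lemma~\ref{tangleminctex} to obtain a large-order tangle $\mct$ with the crucial property that no nonzero cycle lies in a $\mct$-small side. Then apply Theorem~\ref{tanglewallthm} and Theorem~\ref{flatwallundirectedtheorem}. Outcomes (1), (2)(a), (2)(b) yield the packing directly (the crossing linkage in (2)(b) is exactly where the order-two hypothesis enters: if $\Gamma$ has no element of order two one gets $k$ disjoint nonzero cycles, otherwise only a half-integral packing). In outcome (3), the tangle property is what makes $Z$ itself a hitting set: any nonzero cycle in $G-Z$ would sit in a $B$-bridge and hence in a $\mct$-small side, which is forbidden. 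No irrelevant-vertex iteration, no bounded-treewidth base case, and no accumulation of apex sets is needed. If you want to salvage your scheme, you would first have to rework outcome (3) into a flat-wall statement suitable for rerouting and then carry out a weight-preserving irrelevant-vertex argument in the undirected model; neither step is in the paper, and the first is not what the theorem says.
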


In particular, for every positive integer $m$, the family of cycles of length $\not\equiv 0 \mod m$ satisfies the half-integral Erd\H{o}s-P\'osa property. 
Recently, Gollin et al. \cite{GHKKO} proved a more general half-integral Erd\H{o}s-P\'osa result on cycles labelled by multiple abelian groups, which includes the first statement of Theorem \ref{nzcyclesEPtheorem} as a special case.

As another application, we consider the following question of Dejter and Neumann-Lara from 1987:
\begin{question}[Question 4 in \cite{DejNeu}] \label{questionDNL}
For which pairs of positive integers $(\ell,m)$ does the family of cycles of length $\ell \mod m$ satisfy the Erd\H{o}s-P\'osa property?
\end{question}
Thomassen \cite{Tho88} proved that for all positive integers $m$, the family of cycles of length $0 \mod m$ satisfies the Erd\H{o}s-P\'osa property.
On the other hand, Dejter and Neumann-Lara gave infinitely many pairs $(\ell,m)$ where the Erd\H{o}s-P\'osa property fails (Theorem 3 in \cite{DejNeu}). 
Their argument can be rephrased as follows. 
Consider the projective planar $n\times n$-grid construction with $\Gamma = \mbz/m\mbz$.
Give the label $\ell$ to all edges of the form $(1,i)(n,n-i+1)$ and label all other edges 0.
Then, as before, there does not exist a hitting set for cycles of weight $\ell$ with less than $n$ vertices.
Now if the order of $\ell$ in $\Gamma$ is even, then all cycles of weight $\ell$ are one-sided closed curves on the projective plane and no two such cycles are disjoint.
Therefore, for all pairs $(\ell,m)$ such that $\ell \not\equiv 0 \mod m$ and $\ell$ has even order in $\mbz/m\mbz$, the family of cycles of length $\ell \mod m$ does not satisfy the Erd\H{o}s-P\'osa property.

Notice that, in all such pairs, the modulus $m$ is even.
For odd $m$, Question \ref{questionDNL} has remained open for all $\ell \not\equiv 0\mod m$, even for $m=3$.
Using Theorem \ref{flatwallundirectedtheorem}, we prove the following:
\begin{theorem}
\label{cyclesoddprimepowertheorem}
Let $m$ be an odd prime power. 
Then for all integers $\ell$, the family of cycles of length $\ell \mod m$ satisfies the Erd\H{o}s-P\'osa property.
\end{theorem}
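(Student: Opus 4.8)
The plan is a tree-width dichotomy powered by the structure theorem (Theorem~\ref{flatwallundirectedtheorem}) together with the Robertson--Seymour irrelevant-vertex method. First one reduces to the case $\ell \not\equiv 0 \pmod m$, since cycles of length $0 \bmod m$ satisfy the Erd\H{o}s--P\'osa property by Thomassen~\cite{Tho88}. Put $\Gamma = \mbz/m\mbz$; as $m$ is odd, $\Gamma$ has no element of order two, and as $m$ is a prime power the subgroups of $\Gamma$ form a chain $0 = H_0 \subsetneq H_1 \subsetneq \cdots \subsetneq H_t = \Gamma$ --- the two features the argument exploits. By the correspondence between modular cycle lengths and $\Gamma$-labelled graphs recalled in the introduction, it suffices to prove the following $\Gamma$-labelled statement, which is the right form for an inductive argument: for every $g \in \Gamma \setminus \{0\}$ and every $k$ there is $f(k)$ such that every $\Gamma$-labelled graph $(G,\gamma)$ with no $k$ vertex-disjoint cycles of weight $g$ has a set of at most $f(k)$ vertices meeting all weight-$g$ cycles. (Note that if $(G,\gamma)$ already admits a bounded hitting set for \emph{all} $\Gamma$-nonzero cycles then that set meets the weight-$g$ cycles as well; by Theorem~\ref{nzcyclesEPtheorem} this is the case whenever the packing number of $\Gamma$-nonzero cycles is bounded, so one may assume it is large.)

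We iterate the following dichotomy. If $\operatorname{tw}(G) \le w(k)$ for an appropriate threshold $w$, we finish by a standard bounded-tree-width argument: ``weight exactly $g$'' is a property of a cycle recognizable by a finite automaton reading a tree-decomposition (carry, at each vertex of each bag, the partial weight modulo $m$), and for such regular families the Erd\H{o}s--P\'osa property holds on graphs of tree-width at most $w(k)$ with a bound depending only on $k$ and $w(k)$. If instead $\operatorname{tw}(G) > w(k)$, apply Theorem~\ref{flatwallundirectedtheorem} to obtain a large flat wall $W$ together with a bounded apex set $A$ such that, after the cycle-weight-preserving relabelling supplied by the theorem, every cycle contained in $W$ and the subgraphs drawn in its bricks, and avoiding $A$, has weight $0$; call this the \emph{$\Gamma$-trivial region}. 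Since $g \ne 0$, no weight-$g$ cycle lies inside the $\Gamma$-trivial region.

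Now pick a vertex $v$ deep in the interior of $W$, far in the wall metric from both $A$ and the perimeter. I claim $v$ is irrelevant: $(G,\gamma)$ has $k$ disjoint weight-$g$ cycles if and only if $(G-v,\gamma)$ does. One direction is trivial; for the other, apply the flat-wall rerouting argument of Robertson and Seymour to a packing $C_1,\dots,C_k$ of weight-$g$ cycles: only boundedly many of their strands meet the neighbourhood of $v$, and since that neighbourhood lies inside the $\Gamma$-trivial region of a huge flat wall, each offending strand can be rerouted through unused bricks of $W$, avoiding $v$ and all $C_j$ --- and, crucially, \emph{without changing the weight of its cycle}, since rerouting inside a region all of whose cycles have weight $0$ cannot alter any cycle weight. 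It is here that the absence of $2$-torsion in $\Gamma$ is essential: if $\Gamma$ had an element of order two, a region whose cycles all have weight $0$ need not be relabellable to the zero labelling, and a rerouting could flip a cycle's weight by that element --- precisely the mechanism behind the projective-planar grid obstruction. Deleting irrelevant vertices one at a time, the process terminates (the vertex set shrinks), and when it halts the tree-width is at most $w(k)$, so the bounded case finishes the proof.

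The step that requires genuine work, and that consumes the hypothesis that $m$ is a prime power rather than merely odd, is the analysis of weight-$g$ cycles through the apex set $A$: such a cycle must accumulate the \emph{exact} element $g$ and may be forced through $A$ to do so, and one must rule out that this yields an unbounded, un-coverable family. The relevant computation is to determine, for each of the boundedly many combinatorial patterns in which a cycle can traverse $A$ and re-enter the $\Gamma$-trivial region, the set of weights that pattern can realize. Because the subgroups of $\Gamma$ form a chain, this set is a single coset of one of the $H_i$, which yields a clean trichotomy: either $g$ is realizable, and then $k$ disjoint weight-$g$ cycles can be assembled (the huge wall provides many disjoint completions unless the attachment points are clustered, in which case they lie in a bounded set), or $g$ is unrealizable, in which case no weight-$g$ cycle reaches the deep interior of $W$ and the deleted vertex $v$ was irrelevant for the trivial reason. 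Organizing these sub-cases so that in every outcome one either exhibits the packing or makes honest progress toward a bounded cover --- and in particular bounding the region to which the surviving weight-$g$ cycles are pinned --- is the main obstacle, and is exactly where the chain structure of the subgroup lattice of a cyclic $p$-group is used.
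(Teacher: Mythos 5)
Your proposal has genuine gaps, the most serious being a misreading of what Theorem~\ref{flatwallundirectedtheorem} delivers. The theorem does \emph{not} always produce a flat wall whose interior is a ``$\Gamma$-trivial region'': its principal outcomes are $\Gamma$-\emph{odd} structures --- a $\Gamma$-odd $K_t$-model, a facially $\Gamma$-odd flat wall, or a $\Gamma$-bipartite wall carrying a pure $\Gamma$-odd linkage --- and only outcome (3) resembles the trivial region you posit. In the odd outcomes one must extract $k$ disjoint cycles of weight \emph{exactly} $\ell$, not merely nonzero weight, and this is the heart of the matter; your proposal never addresses it. The paper does so by passing to the quotient $\Gamma' = \Gamma/\langle p\rangle$, extracting from each odd structure a closed cycle-chain of length $p^{2a-1}(p-1)$ whose increments lie outside $\langle p\rangle$, and pigeonholing to find $p^a$ equal increments $\alpha\notin\langle p\rangle$; since such an $\alpha$ generates $\Gamma$, the chain realizes every weight, in particular $\ell$. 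Your substitute for this --- that the weights realizable by a fixed traversal pattern through the apex set form a coset of a subgroup $H_i$ --- is unjustified and false in general (two realizable weights $w_1,w_2$ need not close up into a coset), so the prime-power hypothesis is never actually put to correct use. Relatedly, you have no analogue of the paper's induction on the exponent $a$: when the large 3-block is $\Gamma'$-bipartite and $\ell\in\langle p\rangle$, the paper recurses on the $\langle p\rangle\cong\mbz/p^{a-1}\mbz$-labelled block, and without this step that case is unhandled.

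The irrelevant-vertex iteration also does not close the covering side of the duality. If $v$ is irrelevant only in the sense that $G$ and $G-v$ have the same maximum packing of weight-$\ell$ cycles, then a hitting set for the final bounded-tree-width graph is \emph{not} a hitting set for $G$: weight-$\ell$ cycles through the deleted vertices may avoid it entirely, and adding one vertex per deletion destroys the bound. This is why the paper (and Lemma~\ref{tangleminctex}) works with a minimal counterexample and a tangle, so that every small separation side is certified free of the target cycles, rather than deleting vertices. Finally, even granting a genuinely $\Gamma$-bipartite flat region, the claim that rerouting a strand there cannot change a cycle's weight needs the region to be shift-equivalent to the zero labelling on the relevant paths (Lemmas~\ref{lemmaordertwoshiftequivalent}--\ref{lemma3connshiftequivalent} require 3-connectivity for this), not merely that its cycles have weight zero; as stated this step is also incomplete.
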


Let us remark on a further application of Theorem \ref{flatwallundirectedtheorem}.
It is known that \textit{$A$-paths} of even length satisfy the Erd\H{o}s-P\'osa property \cite{BruHeiJoo}.
Bruhn and Ulmer \cite{BruUlm} showed that the same is true for $A$-paths of length $0 \mod 4$ which is particularly interesting because for all other composite moduli $m$, $A$-paths of length $0 \mod m$ do \emph{not} satisfy the Erd\H{o}s-P\'osa property \cite{BruHeiJoo}.
They then asked the question of whether $A$-paths of length $0\mod p$ for a fixed odd prime $p$ satisfy the Erd\H{o}s-P\'osa property (Problem 22 in \cite{BruUlm}).
We show in \cite{ThoYoob} using Theorem \ref{flatwallundirectedtheorem} that $A$-paths of length $0 \mod p$ for prime $p$ indeed satisfy the Erd\H{o}s-P\'osa property, and characterize all abelian groups $\Gamma$ and elements $\ell \in \Gamma$ for which the family of $A$-paths of weight $\ell$ satisfies the Erd\H{o}s-P\'osa property.

Theorem \ref{flatwallundirectedtheorem} is a refinement of the flat wall theorem of Robertson and Seymour \cite{RobSeyXIII} to undirected group-labelled graphs.
Our proof follows the outline of the proof of Huyhn, Joos, and Wollan \cite{HuyJooWol} for directed group-labelled graphs, which is itself an extension of the proofs in \cite{GeeGer, GeeGerReeSeyVet}.
However, there are (in some places significant) technical complications in dealing with undirected group-labelled graphs that are absent in the directed setting.
Upon giving the necessary definitions to state Theorem \ref{flatwallundirectedtheorem}, we sketch an outline of its proof (see section \ref{sec:maintheorem}) and point out where the complications occur.

The remainder of the paper is organized as follows.
Section \ref{sec:prelim} gives the preliminaries.
In section \ref{sec:maintheorem}, we state Theorem \ref{flatwallundirectedtheorem} and outline its proof, then derive Theorem \ref{nzcyclesEPtheorem} from Theorem \ref{flatwallundirectedtheorem}.
We prove Theorem \ref{cyclesoddprimepowertheorem} (assuming Theorem \ref{flatwallundirectedtheorem}) in section \ref{sec:oddprimecycles}.
Sections \ref{sec:clique} and \ref{sec:flatwall} deal with the two outcomes of the flat wall theorem and provide the main technical lemmas, which are then combined to deduce Theorem \ref{flatwallundirectedtheorem} in section \ref{sec:mainproof}.

\section{Preliminaries}
\label{sec:prelim}

All graphs and group-labelled graphs are assumed to be undirected and may have parallel edges but no loops.
A graph is \textit{simple} if it has no parallel edges.
Unless explicitly stated otherwise, we say disjoint to mean vertex-disjoint whenever applicable.
We denote set differences with the notation $S-T = \{s\in S: s\not\in T\}$.

Let $G$ be a graph and let $A,B \subseteq V(G)$.
The subgraph induced by $A$ in $G$ is denoted $G[A]$.
We write $G-A$ to denote $G[V(G) - A]$ and if $H$ is a graph, then we write $G-H$ to denote $G-V(H)$.
For a positive integer $k$, we say that $G$ is \textit{$k$-connected} if $|V(G)|>k$ and $G-X$ is connected for all $X\subseteq V(G)$ with $|X|<k$.

An \emph{$A$-path} is a nontrivial path in $G$ such that both endpoints are in $A$ and no internal vertex is in $A$.
An \emph{$A$-$B$-path} is a (possibly trivial) path in $G$ such that one endpoint is in $A$, the other endpoint is in $B$, and the path is internally disjoint from $A\cup B$.
If $A$ or $B$ are singletons, say $A=\{a\}$, $B=\{b\}$, or both, then we also refer to such a path as an $a$-$B$-path,  $A$-$b$-path, or $a$-$b$-path respectively.
If $H_1,H_2$ are subgraphs of $G$, we also write $H_1$-$H_2$-path to mean a $V(H_1)$-$V(H_2)$-path.
If $T$ is a tree and $u,v \in V(T)$, then the unique $u$-$v$-path in $T$ is denoted by $uTv$.
Given a sequence of such paths $v_0T_1v_1, v_1T_2v_2,\dots,v_{n-1}T_nv_n$, the concatenation of these paths in their given order is denoted by $v_0T_1v_1T_2v_2\dots v_{n-1}T_nv_n$.

An \textit{$A$-bridge} of $G$ is either a subgraph consisting of an edge in $G$ with both endpoints in $A$, or a connected component $H$ of $G-A$ together with the vertices of $A$ adjacent to $H$ and the edges of $G$ with one endpoint in $A$ and the other in $V(H)$.
The \textit{attachments} of an $A$-bridge  are the vertices of the $A$-bridge that are also in $A$.

Let $\mcf$ be a family of graphs.
An \emph{$\mcf$-packing of size $k$} is a set of $k$ disjoint graphs in $\mcf$, and a \emph{half-integral $\mcf$-packing of size $k$} is a multiset of $2k$ graphs in $\mcf$ such that every vertex occurs in at most two graphs in the multiset.
A half-integral $\mcf$-packing of size $k$ can be obtained from an $\mcf$-packing of size $k$ by duplicating each element of the $\mcf$-packing. 
In a graph $G$, a vertex set $Z \subseteq V(G)$ is an \emph{$\mcf$-hitting set} if $G-Z$ does not contain a subgraph in $\mcf$.
We simply say packing or hitting set if the family $\mcf$ is clear from context.

We say that $\mcf$ satisfies the \emph{(half-integral) Erd\H{o}s-P\'osa property} if for every positive integer $k$, there exists a constant $f(k)$ such that every graph $G$ contains either a (half-integral) $\mcf$-packing of size $k$ or an $\mcf$-hitting set of size at most $f(k)$.
In this case we say that $f$ is a \emph{(half-integral) Erd\H{o}s-P\'osa function for $\mcf$}.
These definitions extend in the obvious way to families of group-labelled graphs.

The family of $\Gamma$-nonzero $A$-paths satisfy the Erd\H{o}s-P\'osa property, as shown by Wollan \cite{WolPath}:
\begin{theorem}[Theorem 1.1 in \cite{WolPath}]
\label{nzapathslemma}
Let $\Gamma$ be an abelian group. Then for all $k\in \mbn$ and $\Gamma$-labelled graph $(G,\gamma)$ with $A \subseteq V(G)$, either there exist $k$ disjoint $\Gamma$-nonzero $A$-paths or there exists $X \subseteq V(G)$ with $|X| \leq 50k^4-4$ such that $X$ intersects every $\Gamma$-nonzero $A$-path in $(G,\gamma)$. 
\end{theorem}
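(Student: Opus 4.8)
The plan is to deduce Theorem~\ref{nzapathslemma} from the known min-max theorem for packing $\Gamma$-nonzero $A$-paths (due to Chudnovsky, Geelen, Gerards, Goddyn, Lohman and Seymour) together with a few routine reductions, as in the argument of Wollan~\cite{WolPath}; I do not expect the constant $50$ or the exponent $4$ to be forced, only that \emph{some} polynomial in $k$ falls out. A useful preliminary remark is that, for $\Gamma$-nonzero $A$-paths, the undirected and directed models coincide: orienting the edges arbitrarily gives each $A$-path a well-defined weight up to sign, and being nonzero is insensitive to both the orientation and the sign, so the directed machinery applies verbatim here.

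First I would normalise the instance. Attaching a new pendant vertex to each $a \in A$ by a $0$-labelled edge and redefining $A$ to be the set of these pendants, I may assume that $A$ is an independent set of vertices of degree one; this preserves the maximum size of a $\Gamma$-nonzero $A$-path packing exactly, and the minimum size of a hitting set up to an additive $|A|$-type correction that I would eliminate by hand. After also deleting loops and suppressing irrelevant degree-one and degree-two vertices, a $\Gamma$-nonzero $A$-path is precisely a leaf-to-leaf path of nonzero weight. The structural inputs are then the potential description of balanced $\Gamma$-labelled graphs (a subgraph has no nonzero path between two prescribed vertices precisely when, after a suitable orientation, $\gamma = \delta\psi$ for a potential $\psi$ taking the same value on those vertices) and the min-max theorem: if $(G,\gamma)$ has no packing of size $k$, then there is a vertex set $X$ of bounded size together with a decomposition of $G-X$ into balanceable pieces whose terminal sets have controlled total size. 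I would then build the hitting set by deleting $X$ and, in each balanceable piece, deleting enough terminals --- which ones suffices is dictated by the potential description --- to kill all nonzero $A$-paths using that piece; every remaining nonzero $A$-path must meet $X$. Bounding the number of pieces and the sizes of their terminal sets in terms of $k$ gives a hitting set of size polynomial in $k$, uniformly over $\Gamma$.

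The main obstacle is exactly this last step. The covering side of the min-max theorem is not presented as a set of deleted vertices but in terms of balanceable subgraphs and a counting function, and turning it into an honest vertex hitting set whose size is bounded by a polynomial in $k$ alone --- with no dependence on $\Gamma$ --- requires unwinding the potential-function description of balancedness and carefully controlling how many terminal vertices each piece can be forced to contribute; this is where the group structure really bites. A secondary nuisance is verifying that the normalising surgeries are reversible in both directions, so that a packing in the reduced instance lifts and a small hitting set descends, which forces the pendant and clean-up steps around the vertices of $A$ to be arranged with some care.
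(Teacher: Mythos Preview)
The paper does not prove this statement at all: it is quoted verbatim as Theorem~1.1 of Wollan~\cite{WolPath} and closed with a \qed\ immediately after the statement. So there is no ``paper's own proof'' to compare your proposal against; the result is used as a black box throughout.

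As for your sketch itself, it is a reasonable outline of the strategy one would expect, and your observation that nonzero-ness of an $A$-path is insensitive to orientation (so that the directed min-max machinery of Chudnovsky--Geelen--Gerards--Goddyn--Lohman--Seymour is in principle available) is correct. Two comments. First, the remark that the pendant reduction changes the minimum hitting set ``up to an additive $|A|$-type correction'' is both wrong and unnecessary: if a hitting set in the pendant graph uses a pendant vertex, replace it by its unique neighbour to get a hitting set of the same size in the original graph, so the correspondence is exact. Since $|A|$ is unbounded in terms of $k$, any genuine $|A|$-type loss would be fatal. Second, and more importantly, the step you flag as ``the main obstacle'' --- turning the structural covering side of the min-max theorem into an honest vertex hitting set whose size depends polynomially on $k$ alone --- is not a bookkeeping exercise but essentially the entire content of Wollan's paper. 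Your sketch does not supply any mechanism for this: ``deleting enough terminals \dots\ dictated by the potential description'' is not an argument, because the number of balanceable pieces and their terminal counts in the CGGLS decomposition are not a priori bounded in terms of $k$. So what you have written is closer to a statement of what needs to be done than a proof.
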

We remark that the theorem stated in \cite{WolPath} gives the bound $|X|\leq 50k^4$ rather than $|X|\leq 50k^4-4$, but this difference is clearly negligible in the proof in \cite{WolPath}.
The modified bound will be convenient in some of our calculations.

\subsection{Group-labelled graphs}
Let $(G,\gamma)$ be a $\Gamma$-labelled graph.
A \emph{$\Gamma$-labelled subgraph of $(G,\gamma)$} is a $\Gamma$-labelled graph $(H,\gamma|_H)$ where $H$ is a subgraph of $G$ and $\gamma|_H$ is the $\Gamma$-labelling of $H$ obtained by restricting $\gamma$ to $E(H)$.
When it is understood that $H$ is a subgraph of $G$, we simply write $(H,\gamma)$ to denote $(H,\gamma|_H)$. 
A $\Gamma$-labelled graph is \emph{$\Gamma$-bipartite} if it does not contain a $\Gamma$-nonzero cycle.

Let $g \in \Gamma$ be an element such that $2g=0$ (that is, either $g=0$ or $g$ has order two).
Given a vertex $v \in V(G)$, define a new $\Gamma$-labelling $\gamma'$ of $G$ where
\begin{align*} 
\gamma'(e) = \left\{
	\begin{array}{ll}
	\gamma(e)+g & \text{if $e$ is incident with $v$} \\
	\gamma(e) & \text{if $e$ is not incident with $v$}
	\end{array}
\right.
\end{align*}
We call this operation \emph{shifting at $v$ by $g$}.
Since $2g=0$, this preserves the weights of cycles and also of paths which do not contain $v$ as an endpoint.
We say that $(G,\gamma_1)$ and $(G,\gamma_2)$ are \emph{shift-equivalent} if one can be obtained from the other by a sequence of shifting operations.

Let $\bm 0$ denote the $\Gamma$-labelling that labels all edges 0.
Clearly, if $(G,\gamma)$ is shift-equivalent to $(G,\bm 0)$, then $(G,\gamma)$ is $\Gamma$-bipartite.
If $G$ is 3-connected, then the converse also holds as we now show. First we need the following lemma.

\begin{lemma}\label{lemmaordertwoshiftequivalent}
Let $\Gamma$ be an abelian group and let $(G,\gamma)$ be a $\Gamma$-labelled graph such that $2\gamma(e)=0$ for all $e \in E(G)$.
If $(G,\gamma)$ is $\Gamma$-bipartite, then $(G,\gamma)$ is shift-equivalent to $(G,\bm 0)$.
\end{lemma}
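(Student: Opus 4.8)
The plan is to reduce to the case $\Gamma=\mbz/2\mbz$ and then invoke the standard $\mathbb{F}_2$-duality between the cycle space and the cut (cocycle) space of $G$.

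\textbf{Reduction to $\mbz/2\mbz$.} I would first let $V\le\Gamma$ be the subgroup generated by the edge labels $\{\gamma(e):e\in E(G)\}$. Since $2\gamma(e)=0$ for all $e$, every element of $V$ has order dividing two, so $V$ is a vector space over $\mathbb{F}_2$, and it is finite-dimensional because $G$ is finite. Fixing a basis $e_1,\dots,e_d$ of $V$, I would decompose $\gamma=\sum_{i=1}^d \gamma_i\, e_i$ into $\mbz/2\mbz$-labellings $\gamma_1,\dots,\gamma_d$ of $G$. Since $\gamma(C)=\sum_i\gamma_i(C)e_i$ for every cycle $C$, the labelling $(G,\gamma)$ is $\Gamma$-bipartite if and only if each $(G,\gamma_i)$ is $\mbz/2\mbz$-bipartite. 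Moreover each $e_i$ has order exactly two in $\Gamma$, so shifting at a vertex by $e_i$ is a legitimate shift operation, and it alters only the coordinate $\gamma_i$, performing there the corresponding $\mbz/2\mbz$-shift (because the $j$-th coordinate of the basis vector $e_i$ is the Kronecker delta $\delta_{ij}$). Hence if each $(G,\gamma_i)$ can be shifted to $(G,\bm 0)$ — say by shifting at every vertex of some $S_i\subseteq V(G)$ — then carrying out all of these shifts together (shift at $v$ by $e_i$ for each $i$ and each $v\in S_i$) shifts $(G,\gamma)$ to $(G,\bm 0)$, since the operations act on distinct coordinates. This reduces the lemma to the case $\Gamma=\mbz/2\mbz$.

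\textbf{The $\mbz/2\mbz$ case.} Here I would put $F=\{e\in E(G):\gamma(e)=1\}$ and identify $F$ with its indicator vector in $\mathbb{F}_2^{E(G)}$. A loop is a cycle of length one, so $\Gamma$-bipartiteness forces every loop to have label $0$; loops lie in the cycle space and are unaffected by shifting, so they may be set aside. For each cycle $C$ we have $\gamma(C)=|E(C)\cap F|\bmod 2$, so $(G,\gamma)$ being $\Gamma$-bipartite says exactly that $F$ is orthogonal to the cycle space of $G$; equivalently, $F$ lies in the cut space of $G$, which is spanned by the vertex coboundaries $\delta(v)$ (the set of non-loop edges incident with $v$) over $v\in V(G)$. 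Thus $F=\sum_{v\in S}\delta(v)$ in $\mathbb{F}_2^{E(G)}$ for some $S\subseteq V(G)$. Since shifting at a vertex $v$ by $1$ replaces $F$ by $F+\delta(v)$, shifting at every vertex of $S$ (in any order) replaces $F$ by $F+\sum_{v\in S}\delta(v)=F+F=\emptyset$, that is, it turns $\gamma$ into $\bm 0$. Together with the first paragraph this proves the lemma.

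\textbf{Expected main obstacle.} I do not expect a serious difficulty: the entire content is in observing that the hypothesis $2\gamma(e)=0$ forces all labels into an $\mathbb{F}_2$-vector space, which lets the problem split into binary coordinates, each of which is handled by the textbook fact that the cut space is the orthogonal complement of the cycle space. The only points needing a little care are verifying that the chosen basis elements $e_i$ are genuine order-two elements of $\Gamma$ (so the shifts used are actually allowed), and the routine bookkeeping with loops — equivalently, one may simply delete all loops at the outset, since $\Gamma$-bipartiteness gives them label $0$.
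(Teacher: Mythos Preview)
Your proof is correct, but it takes a different route from the paper's. The paper argues by a short induction on $|E(G)|$: delete an edge $e$, apply the inductive hypothesis to shift all remaining labels to $0$, and then observe that either $e$ lies in a cycle (forcing its new label to be $0$ since shifts preserve cycle weights and $(G,\gamma)$ is $\Gamma$-bipartite) or $e$ is a bridge (in which case shifting by $\gamma'(e)$ at every vertex on one side of the bridge kills the last label, using $2\gamma'(e)=0$). Your approach instead reduces coordinate-wise to $\Gamma=\mbz/2\mbz$ via an $\mathbb{F}_2$-basis of the subgroup generated by the labels, and then invokes the standard fact that the cut space is the orthogonal complement of the cycle space in $\mathbb{F}_2^{E(G)}$. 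The paper's argument is more self-contained and avoids any appeal to algebraic graph theory, while yours is more conceptual and makes explicit why the hypothesis $2\gamma(e)=0$ is exactly what is needed: it places the labelling in an $\mathbb{F}_2$-vector space where ``$\Gamma$-bipartite'' becomes ``orthogonal to the cycle space'' and ``shift-equivalent to $\bm 0$'' becomes ``lies in the cut space''. Both arguments have essentially the same underlying content---the paper's induction is in effect re-deriving the cut-space characterization edge by edge---but the presentations are genuinely different.
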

\begin{proof}
We proceed by induction on $|E(G)|$.
If $|E(G)|=0$ then there is nothing to prove.
Otherwise let $e=uv \in E(G)$.
Then $(G-e,\gamma)$ is also $\Gamma$-bipartite so there is a sequence of shift operations in $(G,\gamma)$ resulting in a $\Gamma$-labelling $\gamma'$ such that $\gamma'(f)=0$ for all $f \in E(G)-e$.
If $e$ is a bridge in $G$, then we obtain $(G,\bm 0)$ by possibly shifting by $\gamma'(e)$ at each vertex in one side of the bridge $e$ (here we use the assumption that $2\gamma(e)=0$). 
Otherwise, $e$ belongs to a cycle.
But since shift operations preserve weights of cycles and $(G,\gamma)$ is $\Gamma$-bipartite, it follows that $\gamma' = \bm 0$.
\end{proof}

Recall that graphs are assumed to have no loops; a cycle that is not simple consists of two parallel edges.
\begin{lemma}\label{lemma3connshiftequivalent}
Let $\Gamma$ be an abelian group and let $(G,\gamma)$ be a $\Gamma$-labelled graph such that $G$ is 3-connected and  $(G,\gamma)$ has no simple $\Gamma$-nonzero cycle.
Then $(G,\gamma)$ is $\Gamma$-bipartite and shift-equivalent to $(G,\bm 0)$.
\end{lemma}
\begin{proof}
Let $e=uv$ be an edge of $G$.
Since $G$ is 3-connected, $G$ contains two internally disjoint $u$-$v$-paths $P_1$ and $P_2$, each with at least 3 vertices.
Since the three simple cycles in $P_1 \cup P_2 \cup \{e\}$ are $\Gamma$-zero, we have $\gamma(e) = -\gamma(P_1) = -\gamma(P_2)$ and, hence, $2\gamma(e)=0$.
If there is an edge $e'$ parallel to $e$, then $\gamma(e')=\gamma(e)$ since otherwise either $P_1\cup\{e\}$ or $P_1\cup\{e'\}$ would be a simple $\Gamma$-nonzero cycle.
Thus $(G,\gamma)$ is $\Gamma$-bipartite, and since $2\gamma(e)=0$ for all $e\in E(G)$, it follows from Lemma \ref{lemmaordertwoshiftequivalent} that $(G,\gamma)$ is shift-equivalent to $(G,\bm 0)$.
\end{proof}
We reiterate that shifting in (undirected) group-labelled graphs can only be done by elements $g \in \Gamma$ such that $2g=0$.
In particular, in Lemma \ref{lemma3connshiftequivalent}, if $\Gamma$ has no element of order two, then the conclusion is that in fact $\gamma = \bm 0$.

The next two lemmas show how 3-connectivity can be used to find a $\Gamma$-nonzero path.
\begin{lemma}
\label{threepathscyclelemma}
Let $\Gamma$ be an abelian group, $(G,\gamma)$ a $\Gamma$-labelled graph, and let $C$ be a cycle in $G$.
Let $w_1,w_2,w_3$ be three distinct vertices on $C$ and, for each $i\in[3]$, let $Q_i$ denote the $w_j$-$w_k$-path in $C$ that is disjoint from $w_i$, where $\{j,k\} = [3]-\{i\}$.
\begin{enumerate}
	\item[(a)]
	If $2\gamma(Q_1) \neq 0$, then for some $j \in \{2,3\}$, the two $w_1$-$w_j$-paths in $C$ have different weights.
	\item[(b)]
	If $C$ is $\Gamma$-nonzero, then for some distinct pair $i,j \in [3]$, the two $w_i$-$w_j$-paths in $C$ have different weights.
\end{enumerate}
\end{lemma}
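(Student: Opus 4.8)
The plan is to reduce both parts to a one-line computation in $\Gamma$ after setting up coordinates for the three arcs of $C$. Fix a direction of traversal of $C$; then $w_1,w_2,w_3$ cut $C$ into three arcs, and for each $i$ write $\alpha_i:=\gamma(Q_i)$ for the weight of the arc $Q_i$ avoiding $w_i$. Since the arcs $Q_1,Q_2,Q_3$ edge-partition $C$, we have $\gamma(C)=\alpha_1+\alpha_2+\alpha_3$. The key observation I would record first is that, for $\{i,j,k\}=[3]$, the two $w_i$-$w_j$-paths contained in $C$ are exactly $Q_k$ and the complementary arc (the one going the other way around through $w_i$... i.e.\ through $w_k$'s two neighbours), whose weights are $\alpha_k$ and $\gamma(C)-\alpha_k$ respectively. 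Consequently, the two $w_i$-$w_j$-paths in $C$ have equal weight if and only if $2\alpha_k=\gamma(C)$.

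For part (a): argue by contradiction. If the conclusion fails, then both the two $w_1$-$w_2$-paths and the two $w_1$-$w_3$-paths have equal weights, i.e.\ $2\alpha_3=\gamma(C)$ and $2\alpha_2=\gamma(C)$. Substituting $\gamma(C)=\alpha_1+\alpha_2+\alpha_3$ into each equation gives $\alpha_3=\alpha_1+\alpha_2$ and $\alpha_2=\alpha_1+\alpha_3$; adding these yields $2\alpha_1=0$, that is $2\gamma(Q_1)=0$, contradicting the hypothesis. For part (b): again by contradiction, if \emph{every} pair of $w_i$-$w_j$-paths in $C$ has equal weight, then $2\alpha_k=\gamma(C)$ for all $k\in[3]$; summing over $k$ gives $2\gamma(C)=2(\alpha_1+\alpha_2+\alpha_3)=3\gamma(C)$, whence $\gamma(C)=0$, contradicting that $C$ is nonzero.

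The only point that needs a little care is the key observation: one must check that the two internally disjoint $w_i$-$w_j$-subpaths of the cycle really are $Q_k$ together with its complementary arc, and that this complementary arc has weight $\gamma(C)-\alpha_k$ (this is immediate once the $Q_i$'s are known to edge-partition $C$). Beyond that, both parts are elementary manipulations in the abelian group $\Gamma$, so I do not anticipate any substantive obstacle; note also that part (b) does not follow formally from part (a), since $\gamma(C)\neq 0$ does not force any of $2\alpha_1,2\alpha_2,2\alpha_3$ to be nonzero, which is why the summing argument in part (b) is needed.
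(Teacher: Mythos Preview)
Your proof is correct and is essentially the same argument as the paper's: both reduce to the observation that the two $w_i$-$w_j$-paths having equal weight is equivalent to $\alpha_k=\alpha_i+\alpha_j$ (i.e.\ $2\alpha_k=\gamma(C)$), then add the resulting equations to force $2\gamma(Q_1)=0$ for (a) and $\gamma(C)=0$ for (b). The paper's write-up is just a slightly terser version of yours, without isolating the ``key observation'' as a separate step.
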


\begin{proof}
If the two $w_1$-$w_2$-paths in $C$ have the same weight, then $\gamma(Q_3) = \gamma(Q_1)+\gamma(Q_2)$.
If the two $w_1$-$w_3$-paths in $C$ also have the same weight, then $\gamma(Q_2) = \gamma(Q_1)+\gamma(Q_3)$.
Adding the two equalities gives $2\gamma(Q_1)=0$, proving (a). 
If, in addition, the two $w_2$-$w_3$-paths in $C$ have the same weight, then $\gamma(Q_1)=\gamma(Q_2)+\gamma(Q_3)$. 
Adding the three equalities gives $\gamma(Q_1)+\gamma(Q_2)+\gamma(Q_3)=0$, proving (b).
\end{proof}
\begin{lemma} \label{lem:threeACpathsnonzero}
	Let $\Gamma$ be an abelian group and let $(G,\gamma)$ be a $\Gamma$-labelled graph.
	Let $C$ be a $\Gamma$-nonzero cycle, let $A\subseteq V(G)$, and let $P_1,P_2,P_3$ be three disjoint $A$-$V(C)$-paths in $(G,\gamma)$. 
	Then $C\cup P_1\cup P_2\cup P_3$ contains a $\Gamma$-nonzero $A$-path.
\end{lemma}
\begin{proof}
	Let $w_i$ denote the endpoint of $P_i$ in $C$ for each $i\in[3]$, and define $Q_i$ as in Lemma \ref{threepathscyclelemma}.
	If $|A\cap V(C)|\geq 2$, then at least one of the $A$-paths in $C$ is $\Gamma$-nonzero.
	If $A\cap V(C)=\emptyset$, then the conclusion follows immediately from Lemma \ref{threepathscyclelemma}(b).
	So we may assume $|A\cap V(C)|=1$ and, without loss of generality, that $w_3\in A\cap V(C)$ (i.e.~$P_3$ is a trivial path).
	Suppose that every $A$-path in $C\cup P_1\cup P_2$ is $\Gamma$-zero.
  Then we have $\gamma(P_1)+\gamma(Q_3)+\gamma(P_2)=0=\gamma(P_1)+\gamma(Q_3)+\gamma(Q_1)$, which implies $\gamma(P_2)=\gamma(Q_1)$, and similarly we have $\gamma(P_1)=\gamma(Q_2)$.
	But this implies $0 = \gamma(P_1)+\gamma(Q_3)+\gamma(P_2) = \gamma(Q_2)+\gamma(Q_3)+\gamma(Q_1)=\gamma(C)\neq 0$, a contradiction.	
\end{proof}

\subsection{Tangles}
A \emph{separation} in a graph $G$ is an ordered pair of subgraphs $(C,D)$ such that $C$ and $D$ are edge-disjoint and $C \cup D = G$.
The \emph{order} of a separation $(C,D)$ is $|V(C) \cap V(D)|$.
A separation of order at most $k$ is a \emph{$k$-separation}.
A \emph{tangle $\mct$ of order $k$} is a set of $(k-1)$-separations of $G$ such that
\begin{enumerate}[label=\textbf{\textup{(T\arabic*)}}]
\itemsep 0.2em \parskip 0em  \partopsep=0pt \parsep 0em  
	\item \label{deftangle1}
	for every $(k-1)$-separation $(C,D)$, either $(C,D) \in \mct$ or $(D,C) \in \mct$,
	\item \label{deftangle2}
	$V(C) \neq V(G)$ for all $(C,D) \in \mct$, and
	\item \label{deftangle3}
	$C_1 \cup C_2 \cup C_3 \neq G$ for all $(C_1,D_1),(C_2,D_2),(C_3,D_3) \in \mct$. 
\end{enumerate}
Given $(C,D) \in \mct$, we say that $C$ and $D$ are the two \emph{sides} of $(C,D)$; $C$ is the \emph{$\mct$-small side} and $D$ is the \emph{$\mct$-large side} of $(C,D)$.

Tangles can be thought of as an orientation of all small order separations so that they point to some ``highly-connected'' part of the graph in a consistent manner.
For example, it is well-known that a connected graph on at least 2 vertices has a tree-decomposition into \emph{blocks} (maximal subgraphs that are either 2-connected or isomorphic to $K_2$). For each block $B$, there is a tangle of order 2 consisting of all 1-separations $(C,D)$ such that $B\subseteq D$.
Examples of higher order tangles associated with large $K_t$-models and with large walls are given in sections \ref{subsec:clique} and \ref{subsec:wall} respectively.

Here, we describe another class of high order tangles which arise from counterexamples to the Erd\H{o}s-P\'osa property.
Suppose $f:\mbn \to \mbn$ is \emph{not} a (half-integral) Erd\H{o}s-P\'osa function for a family $\mcf$ of $\Gamma$-labelled  graphs.
Let us say that $((G,\gamma),k)$ is a \emph{minimal counterexample to $f$ being a (half-integral) Erd\H{o}s-P\'osa function for $\mcf$} if $(G,\gamma)$ does not contain a (half-integral) $\mcf$-packing of size $k$ nor an $\mcf$-hitting set of size at most $f(k)$, and moreover $k$ is minimum among all such $((G,\gamma),k)$.

A standard argument appearing in various forms \cite{BruUlm,Ree99,Tho88,WolCycle} shows that, if $\mcf$ is a family of \emph{connected} $\Gamma$-labelled  graphs that does not satisfy the Erd\H{o}s-P\'osa property, then a minimal counterexample admits a tangle $\mct$ of large order such that no $\mct$-small side of a separation in $\mct$ contains a $\Gamma$-labelled subgraph in $\mcf$.
Recall that if $(G,\gamma)$ is a $\Gamma$-labelled graph and $H$ is a subgraph of $G$, then $(H,\gamma)$ denotes the $\Gamma$-labelled subgraph $(H,\gamma|_H)$ of $(G,\gamma)$.

\begin{lemma}
\label{tangleminctex}
Let $\Gamma$ be an abelian group and let $\mcf$ be a family of connected $\Gamma$-labelled graphs.
Let $f:\mbn\to\mbn$ be a function, and suppose $t$ is a positive integer such that $t\leq f(k)-2f(k-1)$ and $t\leq f(k)/3$.
If $((G,\gamma),k)$ is a minimal counterexample to $f$ being a (half-integral) Erd\H{o}s-P\'osa function for $\mcf$, then $G$ admits a tangle $\mct$ of order $t+1$ such that for each $(C,D) \in \mct$, $(C,\gamma)$ does not contain a $\Gamma$-labelled subgraph in $\mcf$ and $(D-C,\gamma)$ contains a $\Gamma$-labelled subgraph in $\mcf$.
\end{lemma}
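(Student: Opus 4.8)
The plan is to let $\mct$ be the set of all $t$-separations $(C,D)$ of $G$ for which $C$ contains no subgraph in $\mcf$, and to verify directly that $\mct$ is a tangle of order $t+1$ with the stated extra property. Two facts will be used repeatedly: (i) $G$ contains a subgraph in $\mcf$ (else $\emptyset$ is a hitting set, so $k\ge 2$) but contains neither an $\mcf$-packing of size $k$ nor an $\mcf$-hitting set of size at most $f(k)$; and (ii) by minimality of $k$, every group-labelled graph---in particular every subgraph of $G$ with the induced labelling---contains an $\mcf$-packing of size $k-1$ or an $\mcf$-hitting set of size at most $f(k-1)$. The combinatorial engine is that, since members of $\mcf$ are connected, any subgraph in $\mcf$ that avoids the separator $Z=V(C)\cap V(D)$ of a separation $(C,D)$ lies entirely in $G[V(C)\setminus V(D)]$ or entirely in $G[V(D)\setminus V(C)]$.

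Given this, three of the four conditions are quick. If $(C,D)\in\mct$, then $C$ has no subgraph in $\mcf$ by definition; if $D-C$ also had none, then every subgraph of $G$ in $\mcf$ would meet $Z$, so $Z$ would be an $\mcf$-hitting set of size $\le t\le f(k)$, a contradiction. Hence $D-C$ contains a subgraph in $\mcf$, which is the extra conclusion and also gives $V(C)\ne V(G)$, the second tangle axiom. For the third axiom, if $(C_i,D_i)\in\mct$ with separators $Z_i$ and $C_1\cup C_2\cup C_3=G$, then a (nonempty) subgraph in $\mcf$ avoiding $Z_1\cup Z_2\cup Z_3$ would lie in $G[V(D_i)\setminus V(C_i)]$ for each $i$ and hence be vertex-disjoint from $V(C_1)\cup V(C_2)\cup V(C_3)=V(G)$, which is absurd; so $Z_1\cup Z_2\cup Z_3$ is an $\mcf$-hitting set of size $\le 3t\le f(k)$, again a contradiction.

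The real content is the first tangle axiom, which by the above is equivalent to the claim that no $t$-separation $(C,D)$ of $G$ has a subgraph in $\mcf$ on each side. Suppose one did, and set $C^\circ=G[V(C)\setminus V(D)]$, $D^\circ=G[V(D)\setminus V(C)]$. If both $C^\circ$ and $D^\circ$ contain subgraphs $H_C,H_D$ in $\mcf$, these are disjoint, so $G[V(C)]$ has no $\mcf$-packing of size $k-1$ (adjoining $H_D$ would give one of size $k$ in $G$), hence has an $f(k-1)$-hitting set by (ii), and symmetrically so does $G[V(D)]$; their union with $Z$ is an $\mcf$-hitting set of $G$ of size $\le 2f(k-1)+t\le f(k)$, contradiction. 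So we may assume $C^\circ$ has no subgraph in $\mcf$; since $C$ does, fix $H_C\in\mcf$ in $C$, necessarily meeting $Z$. If $D^\circ$ also had none, then $Z$ is a hitting set of $G$ as before, so fix $H_D'\in\mcf$ in $D^\circ$; then exactly as above (using $H_D'$) $G[V(C)]$ has an $f(k-1)$-hitting set $Z_C$. If $G[V(D)]$ has no $\mcf$-packing of size $k-1$ it has an $f(k-1)$-hitting set and we conclude as before; otherwise put $Z'=V(H_C)\cap V(D)\subseteq Z$ and split: either some $\mcf$-packing of size $k-1$ in $G[V(D)]$ avoids $Z'$---then, since its vertex set lies in $V(D)$ and $V(H_C)\cap V(D)=Z'$, it is disjoint from $H_C$, so with $H_C$ it is an $\mcf$-packing of size $k$ in $G$, contradiction---or every such packing meets $Z'$, so $G[V(D)]-Z'$ has none and hence an $f(k-1)$-hitting set $Z_D'$ by (ii), whereupon $Z_C\cup Z_D'\cup Z$ is an $\mcf$-hitting set of $G$ of size $\le 2f(k-1)+t\le f(k)$, the final contradiction.

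I expect this last configuration---a side that contains a member of $\mcf$ although its interior does not, so that every member on that side runs through the separator---to be the only genuine obstacle; the resolution is to notice that such members are then controlled by the small set $Z'$, which lets minimality at level $k-1$ be applied to $G[V(D)]-Z'$. Everything else is bookkeeping with the connectivity of the members of $\mcf$ and the inequalities $t\le f(k)-2f(k-1)$ and $t\le f(k)/3$.
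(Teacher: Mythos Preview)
Your proof is correct and follows the same overall strategy as the paper: define $\mct$ to be the set of $t$-separations whose $C$-side contains no member of $\mcf$, and verify the tangle axioms using the two bounds on $t$ together with minimality of $k$.

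The one place you diverge is in handling the ``both sides contain a member of $\mcf$'' case for the first axiom, and here you make it harder than necessary. You apply minimality of $k$ to $G[V(C)]$ and $G[V(D)]$, which forces the case split on whether $H_C$ lies in $C^\circ$ or merely in $C$ meeting $Z$, and then the further $Z'$ argument. The paper instead applies minimality to $C-D$ and $D-C$ (your $C^\circ$ and $D^\circ$). The point is that once you know $D$ contains some $H_D\in\mcf$, then $H_D$ is vertex-disjoint from $C-D$ regardless of whether $H_D$ meets $Z$, so any (half-integral) packing of size $k-1$ in $C-D$ extends to one of size $k$ by adjoining $H_D$ (twice in the half-integral case). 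Hence $C-D$ has no such packing and, by minimality, has a hitting set of size at most $f(k-1)$; symmetrically for $D-C$. The union of these two hitting sets with $Z$ finishes the argument in one line, with no case analysis. So the configuration you flag as ``the only genuine obstacle'' is not actually an obstacle at all---it is an artifact of applying minimality to the closed sides $G[V(C)]$, $G[V(D)]$ rather than the open sides $C-D$, $D-C$.
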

\begin{proof}
Let $(C,D)$ be a $t$-separation in $G$.
We first show that exactly one of $(C,\gamma)$ and $(D,\gamma)$ contains a $\Gamma$-labelled subgraph in $\mcf$.
If neither side contains a $\Gamma$-labelled subgraph in $\mcf$, then since every graph in $\mcf$ is connected, $V(C\cap D)$ is an $\mcf$-hitting set of size at most $t\leq f(k)$, a contradiction.
Next suppose that both sides contain a $\Gamma$-labelled subgraph in $\mcf$.
Then neither $(C-D,\gamma)$ nor $(D-C,\gamma)$ contains a (half-integral) $\mcf$-packing of size $k-1$.
By minimality of $k$, $(C-D,\gamma)$ and $(D-C,\gamma)$ contain $\mcf$-hitting sets $X$ and $Y$ respectively, each of size at most $f(k-1)$.
Since every $\Gamma$-labelled graph in $\mcf$ is connected, every $\Gamma$-labelled subgraph of $(G-X-Y,\gamma)$ in $\mcf$ intersects $C\cap D$.
Thus $Z:= X \cup Y \cup V(C\cap D)$ is an $\mcf$-hitting set with $|Z| \leq 2f(k-1)+t \leq f(k)$, a contradiction.

Let $\mct$ be the set of $t$-separations $(C,D)$ of $G$ such that $(C,\gamma)$ does not contain a $\Gamma$-labelled subgraph in $\mcf$.
Note that $(D-C,\gamma)$ contains a $\Gamma$-labelled subgraph in $\mcf$ since otherwise $V(C\cap D)$ would again be a small hitting set. 

It remains to show that $\mct$ is a tangle.
Clearly, $\mct$ satisfies \ref{deftangle1} and \ref{deftangle2}. 
To see \ref{deftangle3}, suppose there exist $(C_1,D_1)$, $(C_2,D_2)$, $(C_3,D_3) \in \mct$ such that $C_1\cup C_2\cup C_3 = G$.
Since no $(C_i,\gamma)$ contains a $\Gamma$-labelled subgraph in $\mcf$ and every $\Gamma$-labelled graph in $\mcf$ is connected, every $\Gamma$-labelled subgraph of $(G,\gamma)$ in $\mcf$ intersects $V(C_j\cap D_j)$ for some $j\in[3]$.
But this implies that $Z:= V(C_1\cap D_1)\cup V(C_2\cap D_2) \cup V(C_3\cap D_3)$ is an $\mcf$-hitting set with $|Z| \leq 3t \leq f(k)$, a contradiction.
\end{proof}

Let $\mct$ be a tangle of order $k$ in a graph $G$.
Given a positive integer $k' \leq k$, the set $\mct'$ of $(k'-1)$-separations $(C,D)$ in $G$ such that $(C,D) \in \mct$ is a tangle of order $k'$, called the \emph{truncation} of $\mct$ to order $k'$ (see section 6 in \cite{RobSeyX}).
If $X\subseteq V(G)$ is a set of fewer than $k$ vertices, then there is a tangle of order $k-|X|$ in $G-X$, consisting of the $(k-|X|-1)$-separations of $G-X$ which can be written as $(C-X, D-X)$ for some $(C,D)\in\mct$ ((6.2) in \cite{RobSeyX}).
We denote this tangle by $\mct-X$.

\subsection{3-blocks}
Due to the 3-connectivity condition that arises naturally in undirected group-labelled graphs (e.g. Lemmas \ref{lemma3connshiftequivalent}--\ref{lem:threeACpathsnonzero}), we will need to work with \emph{3-blocks} of graphs.
The decomposition of 2-connected graphs into a tree structure of 3-connected components was first given by Tutte \cite{Tut}.
Here, we use a definition of 3-blocks adapted from the terminology of \emph{$k$-blocks} studied in \cite{Mad, CarDieHamHun, CarDieHunSte}.

Let $G$ be a graph.
A separation $(C,D)$ of $G$ \emph{properly separates} two vertices $u$ and $v$ if $V(C-D)$ and $V(D-C)$ each contain one of $\{u,v\}$.
A vertex set $U\subseteq V(G)$ is \textit{2-inseparable} in $G$ if no two vertices of $U$ are properly separated by a 2-separation in $G$ (that is, for every 2-separation $(C,D)$ of $G$, we have either $U\subseteq V(C)$ or $U\subseteq V(D)$).
A 2-inseparable set $U$ is \textit{maximal} if there does not exist a 2-inseparable set properly containing $U$. 
Observe that if $U$ is a maximal 2-inseparable set, then every $U$-bridge of $G$ has at most 2 attachments; otherwise, the $U$-bridge would contain a vertex $v$ and three $v$-$U$-paths intersecting only at $v$, which implies that $U\cup\{v\}$ is 2-inseparable, contradicting the maximality of $U$.

Let $(G,\gamma)$ be a $\Gamma$-labelled graph.
A \emph{3-block of $(G,\gamma)$} is a $\Gamma$-labelled graph $(B,\gamma_B)$ obtained from a maximal 2-inseparable set $U=V(B)$ of $G$ as follows:
For each $u,v \in U$ and $\alpha \in \Gamma$, if there exists a $U$-path in $(G,\gamma)$ with endpoints $u,v$ and weight $\alpha$, then add a new (possibly parallel) edge $uv$ with label $\alpha$.
Note that $B$ may not be a subgraph of $G$.
For example, if $G$ is a subdivision of a simple 3-connected graph $H$, then $V(H)$ is a maximal 2-inseparable set in $G$, and the corresponding 3-block is $(H,\gamma_H)$ where for each $e\in E(H)$, $\gamma_H(e)$ is the weight of the path in $(G,\gamma)$ corresponding to $e$.
Also observe that if $|U|\geq 4$, then $B$ is a 3-connected graph.
Since every $V(B)$-bridge of $G$ has at most 2 attachments, the following proposition is immediate.
\begin{prop} \label{prop:3blockpathcycle}
	Let $\Gamma$ be an abelian group and let $(B,\gamma_B)$ be a 3-block of a $\Gamma$-labelled graph $(G,\gamma)$.
	For each subgraph $P_B$ of $(B,\gamma_B)$ that is either a simple cycle or a path, there exists a cycle or a path $P$ respectively in $(G,\gamma)$ with weight equal to the weight of $P_B$ such that $V(P)\cap V(B)=V(P_B)$, and the order of the vertices in $V(P_B)$ appearing in $P$ is the same as the order appearing in $P_B$. 
\end{prop}
We will primarily be concerned with a particular 3-block associated with a given tangle.
\begin{lemma}\label{lem:tangleuniquelarge3block}
	Let $\mct$ be a tangle of order $3$ in a graph $G$. 
	Then there is a unique maximal 2-inseparable set $U$ that is contained in every $\mct$-large side (that is, we have $U\subseteq V(D)$ for all $(C,D)\in\mct$).
	Moreover, we have $|U|\geq 4$.
\end{lemma}
\begin{proof}
	If $U_1$ and $U_2$ are distinct maximal 2-inseparable sets of $G$, then there is a 2-separation of $G$ properly separating a vertex in $U_1$ and a vertex in $U_2$.
	It follows from \ref{deftangle1} that there is at most one maximal 2-inseparable set of $G$ contained in every $\mct$-large side.
	
	We now show that such a maximal 2-inseparable set exists.
	Let us say that a 2-separation $(C,D)$ of $G$ is \emph{good} if $(C,D)\in\mct$ and, if $|V(C\cap D)|=2$, then there is a path in $C$ connecting the two vertices of $V(C\cap D)$.
	Let us also say that a 2-separation $(C,D)$ is \emph{tight} if it is good and there does not exist a good separation $(C',D')\in\mct$ such that $C\subseteq C'$ and $D'\subsetneq D$. 
	Let $U \subseteq V(G)$ be the set of vertices which belong to the intersection $V(C\cap D)$ of some tight separation $(C,D)\in\mct$. 
	We claim that $U$ is a maximal 2-inseparable set contained in every $\mct$-large side, and that $|U|\geq 4$.

	Let us first show that $|U|\geq 3$.
	Suppose $|U| \leq 2$.
	Let $(C,D)\in\mct$ be a separation such that $U\subseteq V(C)$  and, subject to this condition, $D$ is minimal. Note that because $|U|\leq 2$, the first condition is satisfied by the separation $(G_U, G)\in\mct$ where $G_U$ denotes the subgraph of $G$ with vertex set $U$ and no edges.
	Now by \ref{deftangle2}, $V(D-C)$ is nonempty, and it follows from \ref{deftangle3} and the minimality of $D$ that $D-C$ is connected.
	Let $v\in V(D-C)$.
	The separation $(G[\{v\}],G)$ is good but not tight (since $v\not\in U$), so there is a tight separation $(C_v,D_v)\in\mct$ such that $V(C_v\cap D_v) \subseteq U$ and $v\in V(C_v-D_v)$.
  Moreover, since $D-C$ is connected and since $U\subseteq V(C)$, $D-C$ is contained in the connected component of $C_v - V(C_v\cap D_v)$ containing $v$. It follows from the minimality of $D$ that $D \subseteq C_v$, and that $G = C \cup D = C \cup C_v$, contradicting \ref{deftangle3}.
	We thus have $|U|\geq 3$.

	Next we show that $U$ is contained in every $\mct$-large side. 
	Suppose to the contrary that there exists $(C,D)\in \mct$ such that $u \in V(C-D)$ for some $u\in U$.
	By the definition of $U$, there is a tight separation $(C_u,D_u)\in \mct$ such that $u\in V(C_u\cap D_u)$.
	Consider the separation $(C\cup C_u, D\cap D_u)$.
	Since the orders of $(C,D)$ and $(C_u,D_u)$ are each at most 2, and since $u \in V(C-D)$, the order of $(C\cup C_u, D\cap D_u)$ is at most 3. If its order is equal to 3, then the orders of $(C,D)$ and $(C_u,D_u)$ are both 2, $V(C\cap D) \subseteq V(D_u-C_u)$, and $V(C_u\cap D_u) - \{u\} \subseteq V(D-C)$ (see Figure \ref{FigSeparations}). 
	Since $V(C\cap D) \cap V(C_u)$ is empty, this contradicts the assumption that $(C_u,D_u)$ is good.

	\begin{figure}
	\centering
	\begin{subfigure}{0.4\textwidth} \centering
\begingroup%
  \makeatletter%
  \providecommand\color[2][]{%
    \errmessage{(Inkscape) Color is used for the text in Inkscape, but the package 'color.sty' is not loaded}%
    \renewcommand\color[2][]{}%
  }%
  \providecommand\transparent[1]{%
    \errmessage{(Inkscape) Transparency is used (non-zero) for the text in Inkscape, but the package 'transparent.sty' is not loaded}%
    \renewcommand\transparent[1]{}%
  }%
  \providecommand\rotatebox[2]{#2}%
  \newcommand*\fsize{\dimexpr\f@size pt\relax}%
  \newcommand*\lineheight[1]{\fontsize{\fsize}{#1\fsize}\selectfont}%
  \ifx\svgwidth\undefined%
    \setlength{\unitlength}{119.26221549bp}%
    \ifx\svgscale\undefined%
      \relax%
    \else%
      \setlength{\unitlength}{\unitlength * \real{\svgscale}}%
    \fi%
  \else%
    \setlength{\unitlength}{\svgwidth}%
  \fi%
  \global\let\svgwidth\undefined%
  \global\let\svgscale\undefined%
  \makeatother%
  \begin{picture}(1,0.81503702)%
    \lineheight{1}%
    \setlength\tabcolsep{0pt}%
    \put(0,0){\includegraphics[width=\unitlength,page=1]{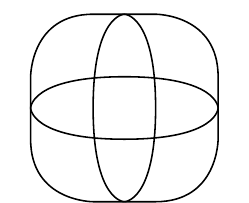}}%
    \put(0.31189484,0.78922731){\makebox(0,0)[lt]{\lineheight{1.25}\smash{\begin{tabular}[t]{l}$C$\end{tabular}}}}%
    \put(0.59488463,0.78922731){\makebox(0,0)[lt]{\lineheight{1.25}\smash{\begin{tabular}[t]{l}$D$\end{tabular}}}}%
    \put(-0.00253839,0.51881477){\makebox(0,0)[lt]{\lineheight{1.25}\smash{\begin{tabular}[t]{l}$C_u$\end{tabular}}}}%
    \put(-0.00253839,0.19180429){\makebox(0,0)[lt]{\lineheight{1.25}\smash{\begin{tabular}[t]{l}$D_u$\end{tabular}}}}%
    \put(0,0){\includegraphics[width=\unitlength,page=2]{FigSeparations.pdf}}%
    \put(0.26787415,0.39933013){\makebox(0,0)[lt]{\lineheight{1.25}\smash{\begin{tabular}[t]{l}$u$\end{tabular}}}}%
    \put(0,0){\includegraphics[width=\unitlength,page=3]{FigSeparations.pdf}}%
    \put(0.93259343,0.41043756){\makebox(0,0)[lt]{\lineheight{1.25}\smash{\begin{tabular}[t]{l}$\ $\end{tabular}}}}%
  \end{picture}%
\endgroup%

	\caption{}
	\label{FigSeparations}
	\end{subfigure}
	\begin{subfigure}{0.4\textwidth}\centering 
\begingroup%
  \makeatletter%
  \providecommand\color[2][]{%
    \errmessage{(Inkscape) Color is used for the text in Inkscape, but the package 'color.sty' is not loaded}%
    \renewcommand\color[2][]{}%
  }%
  \providecommand\transparent[1]{%
    \errmessage{(Inkscape) Transparency is used (non-zero) for the text in Inkscape, but the package 'transparent.sty' is not loaded}%
    \renewcommand\transparent[1]{}%
  }%
  \providecommand\rotatebox[2]{#2}%
  \newcommand*\fsize{\dimexpr\f@size pt\relax}%
  \newcommand*\lineheight[1]{\fontsize{\fsize}{#1\fsize}\selectfont}%
  \ifx\svgwidth\undefined%
    \setlength{\unitlength}{120.11863004bp}%
    \ifx\svgscale\undefined%
      \relax%
    \else%
      \setlength{\unitlength}{\unitlength * \real{\svgscale}}%
    \fi%
  \else%
    \setlength{\unitlength}{\svgwidth}%
  \fi%
  \global\let\svgwidth\undefined%
  \global\let\svgscale\undefined%
  \makeatother%
  \begin{picture}(1,0.80922611)%
    \lineheight{1}%
    \setlength\tabcolsep{0pt}%
    \put(0,0){\includegraphics[width=\unitlength,page=1]{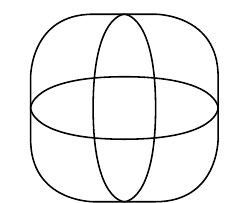}}%
    \put(0.30967084,0.78360042){\makebox(0,0)[lt]{\lineheight{1.25}\smash{\begin{tabular}[t]{l}$C$\end{tabular}}}}%
    \put(0.59064308,0.78360042){\makebox(0,0)[lt]{\lineheight{1.25}\smash{\begin{tabular}[t]{l}$D$\end{tabular}}}}%
    \put(-0.00252029,0.51511585){\makebox(0,0)[lt]{\lineheight{1.25}\smash{\begin{tabular}[t]{l}$C_u$\end{tabular}}}}%
    \put(-0.00252029,0.19043705){\makebox(0,0)[lt]{\lineheight{1.25}\smash{\begin{tabular}[t]{l}$D_u$\end{tabular}}}}%
    \put(0,0){\includegraphics[width=\unitlength,page=2]{FigSeparations2.pdf}}%
    \put(0.27763904,0.37751151){\makebox(0,0)[lt]{\lineheight{1.25}\smash{\begin{tabular}[t]{l}$u$\end{tabular}}}}%
    \put(0,0){\includegraphics[width=\unitlength,page=3]{FigSeparations2.pdf}}%
    \put(0.75281566,0.36831901){\makebox(0,0)[lt]{\lineheight{1.25}\smash{\begin{tabular}[t]{l}$u'$\end{tabular}}}}%
    \put(0.93307403,0.36102219){\makebox(0,0)[lt]{\lineheight{1.25}\smash{\begin{tabular}[t]{l}$\ $\end{tabular}}}}%
  \end{picture}%
\endgroup%

	\caption{}
	\label{FigSeparations2}
	\end{subfigure}
	\caption{
	Since $(C_u,D_u)$ is good, if $|V(C_u\cap D_u)|=2$, then there is a path in $C_u$ connecting the two vertices of $V(C_u\cap D_u)$.
	In (b), $(C\cup C_u, D\cap D_u)$ is not good, so there is a 1-separation $(C',D')$ with $V(C'\cap D')=\{u'\}$ which violates the tightness of $(C_u,D_u)$.}

\end{figure}

	We may thus assume that $(C\cup C_u,D\cap D_u)$ is a 2-separation. 
	It follows from \ref{deftangle1} and \ref{deftangle3} that $(C\cup C_u,D\cap D_u)\in\mct$.
	Note that $C_u\subseteq C\cup C_u$ and, since $u\in V(D_u-D)$, we have  $D\cap D_u \subsetneq D_u$.
	By the assumption that $(C_u,D_u)$ is tight, we have that $(C\cup C_u,D\cap D_u)$ is not good; that is, $|V((C\cup C_u)\cap(D\cap D_u))|=2$ and every $V((C\cup C_u)\cap(D\cap D_u))$-bridge of $C\cup C_u$ has at most one attachment (see Figure \eqref{FigSeparations2} for one possible configuration).
	But this implies that there is a 1-separation $(C',D')\in\mct$ such that $C_u\subseteq C'$, $V(C'\cap D')\subseteq V((C\cup C_u)\cap (D\cap D_u))$, and $D'\subseteq D_u-\{u\} \subsetneq D_u$, contradicting the tightness of $(C_u,D_u)$.

	Hence, $U$ is contained in every $\mct$-large side.
	Note that this also implies that $U$ is 2-inseparable.
	To see that $U$ is a maximal 2-inseparable set, let $u'\in V(G)-U$.
	Then the separation $(G[\{u'\}],G)$ is good but not tight, so there exists a tight separation $(C,D)\in\mct$ such that $u'\subseteq V(C-D)$, $D \subsetneq G$, and $V(C\cap D) \subseteq U$.
	Since $|U|\geq 3$ and $U$ is contained in $V(D)$, this implies that $(C,D)$ properly separates $u'$ from a vertex in $U$.
	We conclude that $U$ is the unique maximal 2-inseparable set that is contained in every $\mct$-large side. 
	
	It remains to show that $|U|\geq 4$.
	We have already shown $|U|\geq 3$, so suppose $|U|=3$ and write $U=\{u_1,u_2,u_3\}$.
	For $i\in[3]$, consider the separation $(C_i,D_i)\in\mct$ such that $V(C_i\cap D_i)=U-\{u_i\}$ and, subject to this condition, $C_i$ is maximal.
	Since $U\subseteq D_i$ for all $i\in[3]$, every $U$-bridge of $G$ (which has at most two attachments in $U$) is contained in $C_i$ for some $i\in[3]$.
	This implies that $C_1\cup C_2 \cup C_3 = G$, contradicting \ref{deftangle3}.
\end{proof}

\begin{lemma} \label{lem:Tlarge3block}
	Let $\mct$ be a tangle of order $k\geq 3$ in a graph $G$, and let $X\subseteq V(G)$ with $|X|\leq k-3$.
	Then there is a unique maximal 2-inseparable set $U$ of $G-X$  such that $U\cup X$ is not contained in any $\mct$-small side.
	Moreover, we have $|U|\geq 4$.
\end{lemma}
\begin{proof}
	Let $\mct_X$ denote the tangle of order 3 in $G-X$ that is a truncation of $\mct-X$ (which has order $k-|X| \geq 3$).
	Let $U$ be the unique maximal 2-inseparable set of $G-X$ contained in every $\mct_X$-large side, given by Lemma \ref{lem:tangleuniquelarge3block}.
	If $U'$ is a maximal 2-inseparable set of $G-X$ distinct from $U$, then there is a 2-separation $(C_X,D_X)\in\mct_X$ such that $U'\subseteq V(C_X)$, so $U'\cup X$ is contained in $G[V(C_X) \cup X]$, which forms the $\mct$-small side of a $(k-1)$-separation in $G$.
	
	Now suppose that $U\cup X$ is contained in a $\mct$-small side. Let $(C,D)\in\mct$ such that $U\cup X\subseteq V(C)$ and, subject to this condition, $D$ is minimal.
	Then $D-X$ is connected by \ref{deftangle3}.
	Since $U$ is a maximal 2-inseparable set of $G-X$ and $U\cup X \subseteq V(C)$, it follows that $D-X$ is contained in a $U$-bridge $H$ of $G-X$. 
	Note that $H$ has at most two attachments in $U$ because $U$ is a maximal 2-inseparable set.
	Hence $H$ forms one side of a 2-separation in $G-X$, and it is in fact the $\mct_X$-small side of this 2-separation by \ref{deftangle3}. This implies that $G[V(H)\cup X]$ forms the $\mct$-small side of a $(k-1)$-separation in $\mct$.
	Since $D\subseteq G[V(H)\cup X]$, we have $G = C\cup D = C \cup G[V(H)\cup X]$,  contradicting \ref{deftangle3}.
	Therefore, $U$ is the unique maximal 2-inseparable set of $G-X$ such that $U\cup X$ is not contained in any $\mct$-small side.
	Finally, note that $|U|\geq 4$ by Lemma \ref{lem:tangleuniquelarge3block}.
\end{proof}

Let $(G,\gamma)$ be a $\Gamma$-labelled graph.
Let $\mct$ be a tangle of order $k\geq 3$ in $G$ and let $X\subseteq V(G)$ with $|X|\leq k-3$.
The \emph{$\mct$-large 3-block of $(G-X,\gamma)$} is the 3-block $(B,\gamma_B)$ of $(G-X,\gamma)$ obtained from the unique maximal 2-inseparable set $U$ of $G-X$ such that $U\cup X$ is not contained in any $\mct$-small side, as given by Lemma \ref{lem:Tlarge3block}.
Note that $B$ is 3-connected, since $|U|\geq 4$ by Lemma \ref{lem:Tlarge3block}.

\subsection{$K_m$-models} \label{subsec:clique}
Let $v_1,\dots,v_m$ denote the vertices of the complete graph $K_m$.
A \emph{$K_m$-model} $\mu$ consists of a collection of disjoint trees $\mu(v_i)$ for $i \in [m]$ and edges $\mu(v_iv_j)$ for distinct $i,j\in [m]$ such that $\mu(v_iv_j)$ has one endpoint in $\mu(v_i)$ and the other in $\mu(v_j)$.
For a subset $U\subseteq \{v_1,\dots,v_m\}$, we denote by $\mu[U]$ the graph defined by
\begin{equation*}
\mu[U] = \bigcup_{v_i \in U} \mu(v_i) \cup \bigcup_{v_i,v_j \in U} \mu(v_iv_j).
\end{equation*}
If we are given $U$ explicitly, say $U=\{v_{i_1},\dots,v_{i_k}\}$, then we simply write $\mu[v_{i_1},\dots,v_{i_k}]$.
When there is no room for ambiguity, we also write $\mu$ to refer to the graph $\mu[V(K_m)]=\mu[\{v_1,\dots,v_m\}]$.
The \emph{$K_{|U|}$-submodel $\pi$ of $\mu$ restricted to $U$} is the $K_{|U|}$-model of the complete graph on the vertex set $U$, given by $\pi(v_i)=\mu(v_i)$ and $\pi(v_iv_j)=\mu(v_iv_j)$ for all $v_i,v_j \in U$.
If $n\leq m$ and $\eta$ is a $K_{n}$-model such that each tree of $\eta$ contains some tree of $\mu$, then we say that $\eta$ is an \emph{enlargement} of $\mu$.
Note that a submodel of $\mu$ is also an enlargement of $\mu$.

It is easy to see that a graph $G$ contains a $K_m$-model (that is, $G$ contains as a subgraph $\mu[V(K_m)]$ for some $K_m$-model $\mu$) if and only if $K_m$ can be obtained from $G$ by a sequence of vertex deletions, edge deletions, and edge contractions; the trees $\mu(v_i)$ correspond to the subgraphs of $G$ that were contracted to form the vertex $v_i$ of $K_m$.

We now describe a tangle associated with a $K_m$-model.
Consider an $(m-1)$-separation $(C,D)$ in a graph $G$ containing a $K_m$-model $\mu$.
Then there is exactly one side of $(C,D)$ that intersects every tree $\mu(v_i)$ of $\mu$.
Indeed, since $\mu(v_1),\dots,\mu(v_m)$ are disjoint trees and $|V(C\cap D)|\leq m-1$, there is a tree $\mu(v_i)$ disjoint from $C\cap D$; assume without loss of generality that $\mu(v_i)\subseteq D-C$.
Since every other tree $\mu(v_j)$ is adjacent to $\mu(v_i)$ by the edge $\mu(v_iv_j) \in E(G)$, $D$ is the unique side of $(C,D)$ intersecting every tree of $\mu$.
The set of all such $(m-1)$-separations satisfies properties \ref{deftangle1} and \ref{deftangle2}.
Note however that it may violate \ref{deftangle3} if $m\geq 3$ (for example, consider $K_3$).

Define $k = \lceil \frac{2m}{3}\rceil$ and let $\mct_\mu$ be the set of $(k-1)$-separations $(C,D)$ in $G$ such that $D$ intersects every tree of $\mu$.
It is straightforward to verify that $\mct_\mu$ satisfies \ref{deftangle3} (see  \cite{RobSeyX}), hence $\mct_\mu$ is a tangle of order $k$.
We call $\mct_\mu$ the \emph{tangle induced by $\mu$}.
If $\eta$ is a submodel or an enlargement of $\mu$, then $\mct_\eta$ is a truncation of $\mct_\mu$.

Let $(G,\gamma)$ be a $\Gamma$-labelled graph.
We say that a $K_m$-model $\mu$ in $G$ is \emph{$\Gamma$-bipartite} if for every choice of four distinct indices $i,j,k,l \in [m]$, we have that $(\mu[v_i,v_j,v_k,v_l],\gamma)$ is a $\Gamma$-bipartite $\Gamma$-labelled graph.
Oppositely, if for every choice of four distinct indices $i,j,k,l\in [m]$, $(\mu[v_i,v_j,v_k,v_l],\gamma)$ contains a $\Gamma$-nonzero cycle, then we say that $\mu$ is \emph{$\Gamma$-odd}.

\begin{remark} \label{rem:Ktbip}
\normalfont
For \emph{directed} group-labelled graphs, the definition of $\Gamma$-bipartite (resp. $\Gamma$-odd) $K_m$-models \cite{HuyJooWol} only require that for every \emph{three} distinct indices $i,j,k \in [m]$, $(\mu[v_i,v_j,v_k],\gamma)$ is $\Gamma$-bipartite (resp. not $\Gamma$-bipartite).
However, the property we actually want of a $\Gamma$-bipartite $K_m$-model $\mu$ is for $(\mu[V(K_m)],\gamma)$, as a $\Gamma$-labelled graph, to be $\Gamma$-bipartite.
And in contrast to the directed setting, the above condition does not suffice for undirected group-labelled graphs.
For example, let $\Gamma=\mbz/3\mbz$, $G=K_m$, $\gamma(e)=1$ for all $e \in E(G)$, and let $\mu$ be a $K_m$-model in $G$.
Then every triangle in $G$ is $\Gamma$-zero, so $(\mu[v_i,v_j,v_k],\gamma)$ is $\Gamma$-bipartite for all distinct $i,j,k \in [m]$, but $(\mu(V(K_m)),\gamma)$ is clearly not $\Gamma$-bipartite for $m\geq 4$.
We will show in Lemma \ref{bipmodelpathlemma} that our definition of $\Gamma$-bipartite $K_m$-models does give this desired property.
\end{remark}

\subsection{Walls} \label{sectionwalls} \label{subsec:wall}
Let $r,s \geq 2$ be integers.
An \emph{$r\times s$-grid} is a graph with vertex set $[r]\times [s]$ and edge set
\begin{equation*}
\left\{(i,j)(i',j'): |i-i'|+|j-j'|=1\right\}.
\end{equation*}
An \emph{elementary $r\times s$-wall} is the subgraph of an $(r+1)\times (2s+2)$-grid obtained by deleting the edges in the set
\begin{equation*}
\left\{(2i-1,2j)(2i,2j): i \in \left[\lceil \tfrac{r}{2}\rceil\right], j \in [s+1]\right\}
\cup
\big\{(2i,2j-1)(2i+1,2j-1): i \in \left[\lceil \tfrac{r-1}{2}\rceil\right], j\in [s+1]\big\},
\end{equation*}
then deleting the two vertices of degree 1.
An \emph{elementary $r$-wall} $W$ is an elementary $r\times r$-wall, and $r$ is the \textit{size} of $W$.
Figure \ref{elem6wall} shows an elementary $6\times 7$-wall.

\begin{figure}[t]
	\centering
	\resizebox{0.6\textwidth}{!}{\begin{tikzpicture}
\colorlet{hellgrau}{black!30!white}
\tikzstyle{smallvx}=[thick,circle,inner sep=0.cm, minimum size=2mm, fill=white, draw=black]
\tikzstyle{smallblackvx}=[thick,circle,inner sep=0.cm, minimum size=2mm, fill=black, draw=black]
\tikzstyle{squarevx}=[thick,rectangle,inner sep=0.cm, minimum size=3mm, fill=white, draw=black]
\tikzstyle{hedge}=[line width=1pt]
\tikzstyle{markline}=[draw=hellgrau,line width=6pt]

\def\wallheight{6}
\def\wallwidth{7}
\def\brickheight{0.7}

\pgfmathtruncatemacro{\lastrow}{\wallheight}
\pgfmathtruncatemacro{\penultimaterow}{\wallheight-1}
\pgfmathtruncatemacro{\lastrowshift}{mod(\wallheight,2)}
\pgfmathtruncatemacro{\lastx}{2*\wallwidth+1}

\def\exhpath{3}
\def\exvpath{2}

\draw[hedge] (\brickheight,0) -- (2*\wallwidth*\brickheight+\brickheight,0);
\foreach \i in {1,...,\penultimaterow}{
  \ifnum \i=\exhpath
    \draw[markline] (0,\i*\brickheight) -- (2*\wallwidth*\brickheight+\brickheight,\i*\brickheight);
  \fi
  {\draw[hedge] (0,\i*\brickheight) -- (2*\wallwidth*\brickheight+\brickheight,\i*\brickheight);}
}
\draw[hedge] (\lastrowshift*\brickheight,\lastrow*\brickheight) to ++(2*\wallwidth*\brickheight,0);

\foreach \j in {0,2,...,\penultimaterow}{
  \foreach \i in {0,...,\wallwidth}{
    \ifnum \i=\exvpath {
      \draw[line width=2.5pt] (2*\i*\brickheight+\brickheight,\j*\brickheight) to ++(0,\brickheight);
      \ifnum \j>0 
        \draw[line width=2.5pt] (2*\i*\brickheight+\brickheight,\j*\brickheight) to ++(-\brickheight,0);
      \fi
    }
    \else
      {\draw[hedge] (2*\i*\brickheight+\brickheight,\j*\brickheight) to ++(0,\brickheight);}
    \fi
  }
}
\foreach \j in {1,3,...,\penultimaterow}{
  \foreach \i in {0,...,\wallwidth}{
    \ifnum \i=\exvpath {
      \draw[line width=2.5pt] (2*\i*\brickheight,\j*\brickheight) to ++(0,\brickheight);
      \draw[line width=2.5pt] (2*\i*\brickheight,\j*\brickheight) to ++(\brickheight,0);
    }
    \else
      \draw[hedge] (2*\i*\brickheight,\j*\brickheight) to ++(0,\brickheight);
    \fi
  }
}
\foreach \i in {1,...,\lastx}{
  \node[smallvx] (w\i w0) at (\i*\brickheight,0){};
}
\foreach \j in {1,...,\penultimaterow}{
  \foreach \i in {0,...,\lastx}{
    \node[smallvx] (w\i w\j) at (\i*\brickheight,\j*\brickheight){};
  }
}
\foreach \i in {1,...,\lastx}{
  \ifodd\i
    \node[smallvx] (w\i w\lastrow) at (\i*\brickheight+\lastrowshift*\brickheight-\brickheight,\lastrow*\brickheight){};
  \else
    \node[smallblackvx] (w\i w\lastrow) at (\i*\brickheight+\lastrowshift*\brickheight-\brickheight,\lastrow*\brickheight){};
  \fi
}
\node[squarevx] (w1 w\lastrow) at (1*\brickheight+\lastrowshift*\brickheight-\brickheight,\lastrow*\brickheight){};
\node[squarevx] (w\lastx w\lastrow) at (\lastx*\brickheight+\lastrowshift*\brickheight-\brickheight,\lastrow*\brickheight){};
\node[squarevx] (w1 w\lastrow) at (2*\brickheight-\lastrowshift*\brickheight-\brickheight,0){};
\node[squarevx] (w\lastx w\lastrow) at (\lastx*\brickheight-\lastrowshift*\brickheight,0){};
\end{tikzpicture}}
\caption{An elementary $6\times 7$-wall. The four corners are marked by square vertices and its top nails are filled black. The fourth horizontal path is highlighted in grey and the third vertical path is marked bold.}
\label{elem6wall}
\end{figure}

Let $W$ be an elementary $r\times s$-wall.
There is a set of $r+1$ disjoint paths called the \emph{horizontal paths} of $W$ such that each path has vertex set $\{(i',j') \in V(W): i'=i\}$ for some $i \in [r+1]$.
Moreover, there is a unique set of $s+1$ disjoint paths from $\{(1,j) \in V(W)\}$ to $\{(r+1,j)\in V(W)\}$; these are called the \emph{vertical paths} of $W$.
An \emph{orientation} of $W$ is an ordering of the horizontal paths and of the vertical paths in one of the four following ways: Fix a planar embedding of $W$ such that the horizontal paths of $W$ are horizontal line segments in the plane, order the horizontal paths from top to bottom, and order the vertical paths from left to right. Given an orientation of $W$, we denote by $P^{(h)}_i, i \in [r+1]$ the $i$-th horizontal path and we denote by $P^{(v)}_j, j\in [s+1]$ the $j$-th vertical path. 
Note that a different orientation reverses the order of the vertical and/or horizontal paths. We usually assume implicitly that each wall has a fixed orientation.

For $(i,j)\in [r]\times [s]$, the \emph{$(i,j)$-th brick} is the facial cycle of length 6 contained in the union $P_i^{(h)}\cup P_{i+1}^{(h)} \cup P_j^{(v)} \cup P_{j+1}^{(v)}$.
The \emph{perimeter} of $W$ is the cycle in the union $P_1^{(h)}\cup P_{r+1}^{(h)} \cup P_1^{(v)} \cup P_{s+1}^{(v)}$.
The \emph{corners} of $W$ are the four vertices that are endpoints of $P_1^{(h)}$ or $P_{r+1}^{(h)}$ (equivalently, the endpoints of $P_1^{(v)}$ or $P_{s+1}^{(v)}$).
The \emph{nails} of $W$ are the vertices of degree 2 that are not corners.
The \emph{top nails} of $W$ are the nails in the first horizontal path of $W$.

An \emph{$r\times s$-wall} or \emph{$r$-wall} is a subdivision of an elementary $r\times s$-wall or $r$-wall respectively.
The corners and nails of an $r$-wall are the vertices corresponding to the corners and nails respectively of the elementary $r$-wall before subdivision.
The \emph{branch vertices} $b(W)$ of a wall $W$ are the vertices of the elementary wall before subdivision, consisting of its corners, nails, and the vertices of degree 3 in $W$.
All other terminology on elementary walls in the preceding paragraphs extend to walls in the obvious way.
Note that, given a wall $W$, there may be many possible choices for its corners and nails.
If this choice is not explicitly described, it is assumed implicitly that a choice of corners and nails accompanies each wall. 
Other definitions which depend on the choice of corners and nails are assumed to be with respect to this implicit choice.

Let $r' \leq r$ and $s'\leq s$.
An \emph{$r'\times s'$-subwall} of an $r\times s$-wall $W$ is an $r'\times s'$-wall $W'$ that is a subgraph of $W$ such that each horizontal and vertical path of $W'$ is a subpath of some horizontal and vertical path of $W$ respectively.
If, in addition, the set of indices $i$ such that $P_i^{(h)}$ contains a horizontal path of $W'$ and the set of indices $j$ such that $P_j^{(v)}$ contains a vertical path of $W'$ both form contiguous subsets of $[r+1]$ and $[s+1]$ respectively, then we say that $W'$ is a \emph{compact} subwall of $W$.
A subwall $W'$ is \emph{$k$-contained} in $W$ if $P_i^{(h)}$ and $P_j^{(v)}$ are disjoint from $W'$ for all $i,j \leq k$ and for all $i > r-k+1$ and $j>s-k+1$.
If $W'$ is 1-contained in $W$, then there is a unique choice of corners and nails of $W'$ such that they have degree 3 in $W$. We call these the \emph{corners and nails of $W'$ with respect to $W$}. The orientation of $W'$ is inherited from $W$; in other words, the ordering of the horizontal/vertical paths of $W'$ agrees with the ordering in $W$.

Let $W$ be an $r$-wall contained in a graph $G$.
If $(C,D)$ is an $r$-separation in $G$, then exactly one side of $(C,D)$ contains a horizontal path of $W$, and the set of $r$-separations $(C,D)$ such that $D$ contains a horizontal path forms a tangle $\mct_W$ of order $r+1$ \cite{RobSeyX}, called the \emph{tangle induced by $W$}.
If $W'$ is a subwall of $W$, then $\mct_{W'}$ is a truncation of $\mct_W$.

We will use the following fundamental result of Robertson and Seymour.

\begin{theorem}[(2.3) in \cite{RobSeyTho}]
\label{tanglewallthm}
For all $r \in \mbn$, there exists $\omega(r) \in \mbn$ such that if a graph $G$ admits a tangle $\mct$ of order $\omega(r)$, then $G$ contains an $r$-wall $W$ such that $\mct_W$ is a truncation of $\mct$.
\end{theorem}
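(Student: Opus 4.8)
We sketch the plan, which is to recall and adapt the classical proof of Robertson, Seymour, and Thomas (the excluded grid theorem in its tangle form). The argument splits into three reductions: (i) from a tangle of large order to a large, highly linked vertex set that is ``central'' for the tangle; (ii) from such a set to a large grid minor whose natural tangle refines $\mct$; and (iii) from a grid minor to a wall subgraph of proportional order, with the induced tangle preserved.

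First I would fix $\theta := \omega(r)$, to be chosen large enough at the end, and use the tangle $\mct$ of order $\theta$ to extract a vertex set $A\subseteq V(G)$ with $|A|=\Omega(\theta)$ that is well linked --- every pair of disjoint equal-size subsets of $A$ is joined by that many disjoint paths --- and that is moreover \emph{$\mct$-central}: for every $(C,D)\in\mct$ of order less than $|A|$, more than half of $A$ lies in $V(D)\setminus V(C)$. Such a set is produced by the standard chain of dualities (a tangle of order $\theta$ forces tree-width at least $\theta-1$, hence a bramble of order $\theta$, from which a greedy argument yields a well-linked set of size $\Omega(\theta)$), and centrality comes for free by running the extraction from a bramble each of whose elements already lies in every $\mct$-big side, so that no low-order separation of $\mct$ can confine a majority of $A$ to its small side.

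The heart of the argument is step (ii): converting $A$ into a grid. Starting from a maximum linkage joining $A$ to itself and a counterexample that is minimal in a suitable order, one peels off concentric cycles together with transversal paths to assemble a $g\times g$ grid minor with $g\ge 2r+2$ --- or, equivalently, one quotes the excluded grid theorem and restricts to the part of $G$ spanned by $A$; in the original source this is instead a direct induction on the order of the tangle. Since the grid is built inside branch sets that together cover $A$, passing to an $r$-wall subgraph $W$ (step (iii), costing only a constant factor in the wall order) yields a wall whose tangle $\mct_W$ is a truncation of $\mct$: if some separation $(C,D)$ of order at most $r$ satisfied $(C,D)\in\mct_W$ but $(C,D)\notin\mct$, then its $\mct$-small side would contain a horizontal path of $W$, hence most of $W$, and --- by the way the grid was built from $A$ --- a majority of $A$, contradicting $\mct$-centrality. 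Choosing $\omega(r)$ large enough to absorb the losses across steps (i)--(iii) completes the proof.

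The main obstacle is step (ii), manufacturing a grid minor from the highly linked set: this is exactly the technical core of ``quickly excluding a planar graph'', and essentially all of the quantitative blow-up in $\omega(r)$ is incurred there. Steps (i) and (iii) are routine applications of tangle/bramble duality and of the elementary geometry of walls. Since any finite bound on $\omega(r)$ suffices for the applications in this paper, I would make no attempt to optimize the resulting (very large) function.
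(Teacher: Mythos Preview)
The paper does not prove this theorem at all: it is quoted as a known result from Robertson, Seymour, and Thomas, \emph{Quickly Excluding a Planar Graph}, and is used as a black box. So there is no ``paper's own proof'' to compare your proposal against; your sketch is an attempt to reconstruct the cited argument rather than something the authors of the present paper supply.

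As a sketch of the classical proof, your outline is broadly reasonable, with the correct identification of step~(ii) as the hard part. A couple of places would need tightening if you actually wrote this out. First, the claim that $\mct$-centrality of $A$ ``comes for free'' is too quick: one has to be careful that the well-linked set extracted from a bramble really inherits the property that no $\mct$-small side can swallow a majority of it, and this typically requires choosing the bramble elements and the greedy selection with the tangle in hand, not just any bramble witnessing large tree-width. Second, the final tangle-compatibility argument is slightly off as stated: you want to show that for every separation $(C,D)$ of order at most $r$ with $(C,D)\in\mct$, the side $D$ contains a horizontal path of $W$; your contrapositive phrasing (``if $(C,D)\in\mct_W$ but $(C,D)\notin\mct$'') conflates the two tangles in a way that would need to be unwound carefully. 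Neither of these is a fatal gap, but both would require real work to make precise---which is exactly why the present paper, like most papers that use this result, simply cites it.
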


A $\Gamma$-labelled wall $(W,\gamma)$ is \emph{facially $\Gamma$-odd} if every brick is a $\Gamma$-nonzero cycle. We say that $(W,\gamma)$ is \emph{strongly \mbox{$\Gamma$-bipartite}} if it is shift-equivalent to a $\Gamma$-labelled graph $(W,\gamma')$ where every $b(W)$-path in $(W,\gamma')$ is $\Gamma$-zero.
Note that this definition depends on the choice of the corners and nails of $W$.
This is a slightly stronger condition than just requiring $(W,\gamma)$ as a $\Gamma$-labelled graph to be $\Gamma$-bipartite, but the difference is superficial; if $(W,\gamma)$ is a $\Gamma$-bipartite $r$-wall with $r\geq 3$, then by Lemma \ref{lemma3connshiftequivalent} and Proposition \ref{prop:3blockpathcycle}, the $(r-2)$-wall 1-contained in $(W,\gamma)$ with the choice of corners and nails with respect to $W$ is strongly $\Gamma$-bipartite.

\subsubsection{Flat walls}

Let $(C,D)$ be a separation of order $k \leq 3$ such that there is a vertex $v \in V(D-C)$ and $k$ paths from $v$ to $V(C\cap D)$ pairwise disjoint except at $v$.
Let $H$ be the graph obtained from $C$ by adding an edge between each nonadjacent pair of vertices in $C\cap D$.
Then we say that $H$ is an \emph{elementary reduction of $G$ with respect to $(C,D)$}.
Let $X\subseteq V(G)$.
An \emph{$X$-reduction} of $G$ is a graph that can be obtained from $G$ by a sequence of elementary reductions with respect to separations $(C,D)$ for which $X\subseteq V(C)$.

Let $W$ be a wall in a graph $G$ and let $O$ denote the perimeter of $W$.
Suppose there is a separation $(C,D)$ of $G$ such that $V(C\cap D) \subseteq V(O)$, $V(W) \subseteq V(D)$, and the corners and nails of $W$ are in $C$.
If there is a $V(C\cap D)$-reduction of $D$ that can be embedded on a closed disk $\Delta$ so that $V(C\cap D)$ lies on the boundary of $\Delta$ and the order of $V(C\cap D)$ along the boundary of $\Delta$ agrees with the order along $O$, then we say that the wall $W$ is \emph{flat in $G$} and that the separation $(C,D)$ \emph{certifies} that $W$ is flat.
Note that a subwall of a flat wall is also flat.

We can now state the flat wall theorem \cite{RobSeyXIII, Chu}.

\begin{theorem}
[Theorem 2.2 in \cite{Chu}]
\label{flatwalltheorem}
Let $r,t \geq 1$ be integers. 
Then there exists a function $F(r,t)$ such that if a graph $G$ contains an $F(r,t)$-wall $W$, then one of the following outcomes hold:
\begin{enumerate}
	\item
	$G$ contains a $K_t$-model $\mu$ such that $\mct_\mu$ is a truncation of $\mct_W$.
	\item
	There exists $Z \subseteq V(G)$ with $|Z|\leq t-5$ and an $r$-subwall $W'$ of $W$ that is disjoint from $Z$ and flat in $G-Z$. 
\end{enumerate}
\end{theorem}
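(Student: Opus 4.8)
This is Theorem~2.2 of \cite{Chu}, a quantitative refinement of the flat wall theorem of Robertson and Seymour \cite{RobSeyXIII}, so I only sketch the approach one would take. Fix the $F(r,t)$-wall $W$ and let $\mct = \mct_W$ be its induced tangle, of order $F(r,t)+1$. The dichotomy is decided by whether $G$ has a \emph{clique minor grasped by $\mct$}: a $K_t$-model $\mu$ in which no branch set $\mu(v_i)$ lies inside a $\mct$-small side. One checks that any such $\mu$ automatically has $\mct_\mu$ a truncation of $\mct_W$, so if it exists we are in outcome~(1). The substance of the proof is to show that if no such model exists, then outcome~(2) holds.

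So assume there is no $K_t$-model grasped by $\mct$. First pass to a large \emph{compact, homogeneous} subwall of $W$: by a Ramsey-type argument applied to the bricks, one may assume the $B$-bridges attaching to the subwall all interact with the grid structure in the same qualitative way. For the perimeter $O'$ of this subwall, form the society consisting of the part $D'$ of $G$ lying ``inside'' $O'$ together with the cyclic order of $V(O')$ along $O'$. The key structural input is the classification of societies with no large \emph{transaction} (a family of pairwise-crossing disjoint paths linking the distinguished cycle): such a society is \emph{rural}, i.e.\ has a reduction that embeds in a disk with the distinguished vertices on the boundary in the given cyclic order --- which is precisely the condition that the subwall is flat. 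Hence it suffices to produce a small set $Z$, $|Z| \le t-5$, whose deletion destroys every large transaction around a large subwall.

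This is where the hypothesis of no grasped $K_t$-model is used: a transaction of sufficiently large order, spliced together with the horizontal and vertical paths of $W$, can be rerouted into $t$ pairwise-joined connected subgraphs each crossing many horizontal paths --- a $K_t$-model grasped by $\mct$. Taking $F(r,t) = \Theta(t(r+t))$ calibrates this trade-off so that, in the absence of such a model, a Menger / uncrossing argument yields the desired set $Z$ with $|Z| \le t-5$ and a large subwall $W'$ that remains disjoint from $Z$ and whose associated society in $G-Z$ is rural, giving outcome~(2). The main obstacle is this splicing step: realizing all $\binom{t}{2}$ connecting paths simultaneously through the grid without collisions is exactly what forces $F$ to grow like $t(r+t)$, and obtaining this sharp bound rather than the polynomial bounds of \cite{RobSeyXIII} requires Chuzhoy's more careful well-linkedness and routing machinery; the homogenization step and the society structure theorem, though technical, are by now standard and would be quoted rather than reproved.
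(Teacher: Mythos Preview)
The paper does not prove this theorem at all: note the \qed\ at the end of the statement and the attribution ``Theorem~2.2 in \cite{Chu}''. It is quoted as an external input (Chuzhoy's quantitative flat wall theorem, itself refining Robertson--Seymour \cite{RobSeyXIII}) and used as a black box in Section~\ref{sec:mainproof}. There is therefore no proof in the paper to compare your proposal against.

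Your sketch is a reasonable high-level narrative of the Robertson--Seymour approach (tangles, societies, rural reductions), but a couple of points are off if one is aiming at \cite{Chu} specifically. First, a ``transaction'' is a linkage between two disjoint arcs of the society cycle, not a family of pairwise-crossing paths; the relevant dichotomy is between a large \emph{crooked} transaction and a vortex/rural structure. Second, Chuzhoy's $\Theta(t(r+t))$ bound does not come from the society machinery at all: her argument is a direct combinatorial one based on well-linked sets, path-of-sets systems, and careful routing through the wall, bypassing the Robertson--Seymour structure theory that your sketch invokes. So while your outline is plausible for a qualitative flat wall theorem, it does not reflect the method that actually achieves the stated bound.
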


\subsection{Linkages}
Let $G$ be a graph and let $A \subseteq V(G)$.
A \emph{linkage} is a set of disjoint paths. 
The union of the paths in a linkage $\mcp$ is denoted $\cup\mcp$.
An \emph{$A$-linkage} is a linkage of $A$-paths.
Let $<$ be a linear order on $A$ and let $P$ and $Q$ be disjoint $A$-paths with endpoints $(p_1,p_2)$ and $(q_1,q_2)$ respectively such that $p_1<p_2$ and $q_1<q_2$.
We say that $P$ and $Q$ are \emph{in series} if $p_2 < q_1$ or $q_2 < p_1$, \emph{nested} if $p_1<q_1<q_2<p_2$ or $q_1<p_1<p_2<q_2$, and \emph{crossing} if they are neither in series or nested.
An $A$-linkage $\mcp$ is \emph{in series, nested}, or \emph{crossing} if every pair of paths in $\mcp$ is in series, nested, or crossing, respectively.
An $A$-linkage is \emph{pure} if it is in series, nested, or crossing. 
\begin{lemma}[Lemma 25 in \cite{HuyJooWol}]
\label{purelinkagelemma}
Let $A$ be a linearly ordered set and let $t \in \mbn$. If $\mcp$ is an $A$-linkage with $|\mcp| \geq t^3$, then $\mcp$ contains a pure linkage $\mcp'$ with $|\mcp'|\geq t$.
\end{lemma}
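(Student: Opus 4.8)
The plan is to encode each $X$-path as an interval in the linear order on $X$ and then invoke two classical order-theoretic facts, whose composition accounts exactly for the bound $t^3 \to t^2 \to t$. First I would identify each $P \in \mcp$ with the pair of its endpoints written as $\ell(P) < r(P)$ in the order on $X$; since the paths in $\mcp$ are pairwise disjoint, all $2|\mcp|$ endpoints are distinct, so $P$ corresponds faithfully to the interval $[\ell(P), r(P)]$ with distinct endpoints. Unwinding the definitions of the previous paragraph, two paths are \emph{in series} iff their intervals are disjoint, \emph{nested} iff one interval strictly contains the other, and \emph{crossing} iff the intervals overlap with neither containing the other.

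Order the paths $P_1, \dots, P_n$ (with $n = |\mcp| \ge t^3$) by increasing left endpoint. Define $P \prec Q$ iff $r(P) < \ell(Q)$; this is a strict partial order on $\mcp$ (transitivity uses $\ell(Q) < r(Q)$), its chains are exactly the sets of pairwise-in-series paths, and its antichains are exactly the sets of pairwise-overlapping intervals (i.e.\ each pair nested or crossing). Now apply Mirsky's theorem (the dual of Dilworth's): either $\prec$ has a chain of length $t$, giving $t$ pairwise-in-series paths, which already form a pure sublinkage and we are done; or $\prec$ has no such chain, so $\mcp$ can be partitioned into at most $t-1$ antichains, one of which has size at least $\lceil t^3/(t-1)\rceil > t^2$.

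In the latter case let $S$ be such an antichain, a set of more than $t^2$ pairwise-overlapping intervals, and list its members by increasing left endpoint. For members $P_i, P_j$ of $S$ with $\ell(P_i) < \ell(P_j)$, overlap forces $\ell(P_j) < r(P_i)$, so the pair is nested precisely when $r(P_j) < r(P_i)$ and crossing precisely when $r(P_i) < r(P_j)$. Hence applying the Erd\H{o}s-Szekeres theorem to the sequence of right endpoints of the members of $S$ taken in left-endpoint order (a sequence of length $> t^2 \ge (t-1)^2 + 1$) produces a monotone subsequence of length $t$: a decreasing one yields $t$ pairwise-nested paths, an increasing one yields $t$ pairwise-crossing paths. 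In every case $\mcp$ contains a pure sublinkage of size $t$.

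I expect no genuine obstacle here; the only care needed is the bookkeeping that pairwise disjointness of the paths makes all endpoints distinct (so the interval model is faithful and $\prec$ is a strict order with the claimed chains and antichains), and checking that the three geometric configurations of two intervals match exactly the series/nested/crossing trichotomy as defined via the order $<$. The numerology $t^3 = t\cdot t^2$ together with $t^2 \ge (t-1)^2 + 1$ is precisely what lets a single Mirsky step and a single Erd\H{o}s-Szekeres step suffice.
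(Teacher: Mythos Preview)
Your argument is correct. The paper does not give its own proof of this lemma: it is quoted verbatim from \cite{HuyJooWol} and closed with a \qed, so there is nothing to compare against in this paper. Your two-step reduction---Mirsky's theorem on the partial order ``$r(P)<\ell(Q)$'' to isolate either $t$ paths in series or an antichain of more than $t^2$ pairwise-overlapping intervals, followed by Erd\H{o}s--Szekeres on the right endpoints of that antichain (sorted by left endpoint) to extract $t$ nested or $t$ crossing paths---is a clean and standard way to obtain the bound, and the bookkeeping you flag (distinct endpoints, the interval trichotomy matching the series/nested/crossing definitions, and the numerology $t^3/(t-1)>t^2\geq (t-1)^2+1$) is all in order. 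The only cosmetic point is that the case $t=1$ should be disposed of before dividing by $t-1$, but that case is trivial.
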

Let $(W,\gamma)$ be a $\Gamma$-labelled wall in $(G,\gamma)$ with the set $N$ of top nails.
A \emph{linkage of $(W,\gamma)$} is an $N$-linkage in $G-(W-N)$.
Note that this definition depends on the choice of the top nails of $W$, which may be implicit.
A linkage of $(W,\gamma)$ is \emph{pure} if it is pure with respect to a linear ordering of $N$ given by the top horizontal path of $(W,\gamma)$.
A linkage $\mcp$ of $(W,\gamma)$ is \emph{$\Gamma$-odd} if $(W\cup P,\gamma)$ contains a $\Gamma$-nonzero cycle containing $P$ for all $P\in\mcp$.
If $(W,\gamma)$ is strongly $\Gamma$-bipartite and $\mcp$ is a $\Gamma$-odd linkage of $(W,\gamma)$, then $(G,\gamma)$ is shift-equivalent to a $\Gamma$-labelled graph $(G,\gamma')$ where every $b(W)$-path $Q$ in $W$ satisfies $\gamma'(Q)=0$ and every path $P$ in $\mcp$ satisfies $\gamma'(P)\neq 0$.

\section{Main theorem} \label{sec:maintheorem}
We are now ready to state our main structure theorem.
Roughly, it says that given a large order tangle in a $\Gamma$-labelled graph $(G,\gamma)$, we can either find many $\Gamma$-nonzero cycles distributed in one of few specified configurations or delete a bounded size vertex set to destroy all $\Gamma$-nonzero cycles in the ``large'' part of the tangle.
This is particularly useful for proving Theorem \ref{nzcyclesEPtheorem} since a minimal counterexample to the family of $\Gamma$-nonzero cycles satisfying the Erd\H{o}s-P\'osa property admits a large tangle $\mct$ such that no $\Gamma$-nonzero cycle is contained in the small side of a separation in $\mct$, as we saw in Lemma \ref{tangleminctex}.

\begin{restatable}{theorem}{flatwallundirectedtheoremres}
\label{flatwallundirectedtheorem}
Let $\Gamma$ be an abelian group and let $r,t\geq 1$ be integers. Then there exist integers $g(r,t)$ and $h(r,t)$, where $h(r,t)\leq g(r,t)-3$, such that if a $\Gamma$-labelled graph $(G,\gamma)$ contains a wall $(W,\gamma)$ of size at least $g(r,t)$, then one of the following outcomes hold:
\begin{enumerate}
	\item[(1)]
	There is a $\Gamma$-odd $K_t$-model $\mu$ in $G$ such that $\mct_\mu$ is a truncation of $\mct_W$.
	\item[(2)]
	There exists $Z \subseteq V(G)$ with $|Z|\leq h(r,t)$ and a flat $50r^{12}$-wall $(W_0,\gamma)$ in $(G-Z,\gamma)$ such that $\mct_{W_0}$ is a truncation of $\mct_W$ and either
	\begin{enumerate}
		\item
		$(W_0,\gamma)$ is facially $\Gamma$-odd, or
		\item
		$(W_0,\gamma)$ is strongly $\Gamma$-bipartite and there is a pure $\Gamma$-odd linkage of $(W_0,\gamma)$ of size $r$.
	\end{enumerate}
	\item[(3)]
	There exists $Z \subseteq V(G)$ with $|Z|\leq h(r,t)$ such that the $\mct_W$-large 3-block of $(G-Z,\gamma)$ is $\Gamma$-bipartite.
\end{enumerate}
\end{restatable}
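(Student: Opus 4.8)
The plan is to apply the flat wall theorem (Theorem~\ref{flatwalltheorem}) to the given wall $(W,\gamma)$ and to feed its two outcomes into two structural lemmas, to be proved in Sections~\ref{sec:clique} and~\ref{sec:flatwall}. We fix internal parameters $t_1\ge t$ and $r_1\ge r$, to be chosen large in terms of $r$ and $t$ at the very end, and set $g(r,t):=F(r_1,t_1)$, so that the hypothesis supplies an $F(r_1,t_1)$-wall. Theorem~\ref{flatwalltheorem} then returns either (i) a $K_{t_1}$-model $\mu$ in $G$ with $\mct_\mu$ a truncation of $\mct_W$, or (ii) a set $Z_1\subseteq V(G)$ with $|Z_1|\le t_1-5$ together with a flat $r_1$-subwall $W_1$ of $W$ in $(G-Z_1,\gamma)$. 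All further deleted sets and wall reductions will be polynomial in $r$ and $t$, from which $g$ and $h$ are recovered at the end.

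In case (i) I would prove and invoke the main lemma of Section~\ref{sec:clique}: if $t_1$ is large enough in terms of $t$, then every $K_{t_1}$-model $\mu$ in $(G,\gamma)$ either contains a $\Gamma$-odd $K_t$-model $\mu'$ with $\mct_{\mu'}$ a truncation of $\mct_\mu$, or there is a set $Z$ of size bounded in terms of $t$ such that the $\mct_\mu$-large $3$-block of $(G-Z,\gamma)$ is $\Gamma$-bipartite. The first alternative is outcome~(1), since $\mct_{\mu'}$ is then a truncation of $\mct_W$; in the second, because $|Z|$ is small and $\mct_\mu$ is a truncation of $\mct_W$, the $\mct_\mu$-large and $\mct_W$-large $3$-blocks of $(G-Z,\gamma)$ coincide, giving outcome~(3). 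For the lemma itself: a $4$-uniform hypergraph Ramsey argument on $[t_1]$ yields from $\mu$ either a $\Gamma$-odd $K_t$-submodel, or a $\Gamma$-bipartite $K_m$-submodel $\nu$ with $m$ still large; Lemma~\ref{bipmodelpathlemma} then guarantees that the union of $\nu$ is $\Gamma$-bipartite, and applying Wollan's nonzero $A$-path theorem (Theorem~\ref{nzapathslemma}) with $A$ the set of roots of $\nu$ to the part of $G$ that $\nu$ captures, we get either many disjoint nonzero $A$-paths --- which together with $\nu$ and Lemma~\ref{threepathscyclelemma} assemble into a $\Gamma$-odd $K_t$-model --- or a bounded hitting set $Z$ after whose deletion no nonzero $A$-path survives near $\nu$; since $\nu$ controls the tangle, every $\Gamma$-nonzero cycle meeting the $\mct_\nu$-large $3$-block would yield such a path, and passing $\Gamma$-bipartiteness down to the $\mct_\mu$-large $3$-block (again a coincidence of $3$-blocks) completes the lemma.

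In case (ii) I would prove and invoke the main lemma of Section~\ref{sec:flatwall}, applied to $(W_1,\gamma)$ inside $(G-Z_1,\gamma)$. First, a pigeonhole/Ramsey argument on the weights of the $b(W_1)$-paths and the bricks of $W_1$, combined with the shifting calculus and Lemma~\ref{lemma3connshiftequivalent}, extracts a large compact, $1$-contained subwall of $W_1$ that is either facially $\Gamma$-odd or, after shifting, a $\Gamma$-bipartite wall. In the facially $\Gamma$-odd case, passing to a $50r^{12}$-subwall $W_0$ gives outcome~(2a): $W_0$ is flat (a subwall of a flat wall is flat) and $\mct_{W_0}$ is a truncation of $\mct_W$ (as $W_0$ is a subwall of $W$). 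In the $\Gamma$-bipartite-wall case, let $W_0$ be a $50r^{12}$-subwall and consider the linkages of $(W_0,\gamma)$ over its top nails. If some such linkage is $\Gamma$-odd and has size $r^3$, then Lemma~\ref{purelinkagelemma} refines it to a pure $\Gamma$-odd linkage of size $r$, giving outcome~(2b). Otherwise no $\Gamma$-odd linkage of $(W_0,\gamma)$ has size $r^3$, and using flatness (the disk embedding of a reduction of the flat side $D$), the fact that $\mct_{W_0}$ controls the tangle so that every $\Gamma$-nonzero cycle substantially meeting the large $3$-block is confined to the flat region, the $\Gamma$-bipartiteness of the wall (which keeps path weights across the flat region consistent), and Theorem~\ref{nzapathslemma} again to destroy the bounded family of ``nonzero attachments'' of $G-Z_1$ to the wall, one obtains a bounded set $Z_2$ with the $\mct_{W_1}$-large $3$-block of $(G-Z_1-Z_2,\gamma)$ $\Gamma$-bipartite; as $\mct_{W_1}$ corresponds to a truncation of $\mct_W$ and $|Z_1\cup Z_2|$ is small, this yields outcome~(3) with $Z:=Z_1\cup Z_2$.

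What remains is bookkeeping: choose $r_1$ large relative to $t_1$ and the sizes of the deleted sets so that $\mct_{W_1}$, of order $r_1+1$, exceeds in order the total number of deleted vertices plus $3$ (the same holding trivially for $\mct_W$, of order $g(r,t)+1$); choose $t_1$ large enough to drive both Ramsey arguments and to make the extracted submodels and subwalls large enough for the subsequent step; and check that every passage to a subwall, submodel, or truncation, and every vertex deletion, preserves the relation ``is a truncation of $\mct_W$'' and leaves sufficient room. Then $h(r,t)$ is the sum of the (polynomial) bounds on the deleted sets. I expect the main obstacle to be the ``no large $\Gamma$-odd linkage'' branch of the flat-wall case: producing a genuinely bounded hitting set for \emph{all} $\Gamma$-nonzero cycles in the large $3$-block --- not merely those localised near the wall --- requires a delicate interplay of flatness, the wall-induced tangle, the shifting calculus, and Wollan's nonzero $A$-path theorem, and this is where the undirected model departs most from the directed argument (shiftings, elements of order two, and the subtlety recorded in Remark~\ref{rem:Ktbip} all enter here); the passage from a $\Gamma$-bipartite $K_m$-model to a bounded hitting set in the clique case is a secondary difficulty of the same flavour.
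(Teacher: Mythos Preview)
Your proposal is correct and follows essentially the same approach as the paper: apply the flat wall theorem, then invoke Lemma~\ref{oddmodellemma} for the clique outcome and Lemma~\ref{flatwalllemma} for the flat-wall outcome, using exactly the tools you name (4-uniform Ramsey, Lemma~\ref{bipmodelpathlemma}, Theorem~\ref{nzapathslemma}, Lemma~\ref{purelinkagelemma}). One calibration: the paper locates the main technical difficulty in the \emph{clique} case rather than the flat-wall case---assembling a $\Gamma$-odd $K_t$-model from nonzero $A$-paths through a $\Gamma$-bipartite $K_m$-model requires the careful machinery of $d$-central and $d$-branching vertices (Lemma~\ref{lbranchinglemma}) to control how the chosen roots branch within the trees of the model, and two rounds of Theorem~\ref{nzapathslemma} (once in $G$, once in a contracted minor), whereas the flat-wall case is comparatively routine.
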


The proof of Theorem \ref{flatwallundirectedtheorem} proceeds first by applying the flat wall theorem (Theorem \ref{flatwalltheorem}) to obtain one of its two outcomes.
If there is a large $K_m$-model $\pi$ in $G$ such that $\mct_\pi$ is a truncation of $\mct_W$, then by Ramsey's theorem for 4-uniform hypergraphs, we obtain a large submodel $\mu$ of $\pi$ that is either $\Gamma$-odd or $\Gamma$-bipartite.
The first case satisfies outcome (1) of Theorem \ref{flatwallundirectedtheorem}.
In the second case, we show that the $\Gamma$-labelled subgraph $(\mu,\gamma)$ is $\Gamma$-bipartite (see also Remark \ref{rem:Ktbip}).
We then look to enlarge $\mu$ to a $\Gamma$-odd $K_t$-model by choosing a vertex $s_i$ in each tree $\mu(v_i)$ and finding many disjoint $\Gamma$-nonzero $S$-paths, where $S=\{s_1,\dots,s_t\}$.
With an appropriate choice of such paths, we obtain a $\Gamma$-odd enlargement of $\mu$ whose trees contain the union of a pair of trees $\mu(v_i)$ and the $\Gamma$-nonzero $S$-path connecting them (see Figure \ref{fig:oddmodel} in section \ref{sec:clique}).

Finding the appropriate $S$-paths, however, seems to be considerably more difficult in undirected group-labelled graphs compared to the directed setting.
The main obstacle is that Lemma \ref{lemma3connshiftequivalent} requires 3-connectivity for a $\Gamma$-bipartite graph to be shift-equivalent to the labelling $\bm 0$, whereas the directed analog of Lemma \ref{lemma3connshiftequivalent} (which is essentially equivalent to Lemma \ref{lemmaordertwoshiftequivalent}) does not require any connectivity assumptions.
This means that, in the $\Gamma$-bipartite graph $(\mu,\gamma)$, we can only guarantee that paths between \emph{branching} vertices of $\mu$ (defined in section \ref{sec:clique}) are $\Gamma$-zero.
This requires us to choose the vertices $s_i$ more carefully and keep track of how each $s_i$ branches to the other trees of $\mu$ throughout the proof.

In the second outcome of the flat wall theorem, we also employ a Ramsey-type argument to the given wall $(W,\gamma)$ to obtain a smaller wall $(W_0,\gamma)$ that is either facially $\Gamma$-odd or $\Gamma$-bipartite.
The first case satisfies outcome (2)-(a) of Theorem \ref{flatwallundirectedtheorem}.
In the second case, we find many disjoint $\Gamma$-nonzero $N_0$-paths outside of the wall $W_0$, where $N_0$ is the set of top nails of $W_0$, and apply Lemma \ref{purelinkagelemma} to obtain a pure $\Gamma$-odd linkage of $(W_0,\gamma)$.
In this part, the 3-connectivity condition adds only minor obstacles.

In both outcomes, if the desired $\Gamma$-nonzero $S$-paths or $N_0$-paths do not exist, then we show that outcome (3) of Theorem \ref{flatwallundirectedtheorem} is satisfied using Theorem \ref{nzapathslemma}.

\subsection{Proof of Theorem \ref{nzcyclesEPtheorem}}

Theorem \ref{nzcyclesEPtheorem} follows readily from Theorem \ref{flatwallundirectedtheorem} and the tools presented in Section \ref{sec:prelim}.
The proofs of the two statements in Theorem \ref{nzcyclesEPtheorem} are almost identical; they diverge only in the outcome (2)-(b) of Theorem \ref{flatwallundirectedtheorem} with a crossing $\Gamma$-odd linkage of $(W_0,\gamma)$, in which case the existence of disjoint $\Gamma$-nonzero cycles is only guaranteed if $\Gamma$ has no element of order two.
The two proofs will be distinguished only in this case and will otherwise proceed simultaneously.

\label{sec:nzcycles}

\nzcyclesEPtheoremres*

\begin{proof}
	For each positive integer $k$ define $r=r(k)=3k$ and $t=t(k)=4k$.
	Let $\omega$ be the function given by Theorem \ref{tanglewallthm} and let $g$ and $h$ be the functions given by Theorem \ref{flatwallundirectedtheorem}. 
	Let $f:\mbn \to \mbn$ be a function such that $f(k) \geq \omega(g(r,t)) + 2f(k-1)$, $f(k) \geq 3\omega(g(r,t))$, and $f(k) \geq h(r,t)$ for all $k \geq 2$.
	We claim that $f$ is a (half-integral) Erd\H{o}s-P\'osa function for the family of $\Gamma$-nonzero cycles.
	
	For the sake of contradiction, suppose $((G,\gamma),k)$ is a minimal counterexample to $f$ being a (half-integral) Erd\H{o}s-P\'osa function for the family of $\Gamma$-nonzero cycles.
	Then by Lemma \ref{tangleminctex}, $(G,\gamma)$ admits a tangle $\mct$ of order $\omega(g(r,t))$ such that no $\Gamma$-nonzero cycle is contained in the small side of a separation in $\mct$.
	By Theorem \ref{tanglewallthm}, $(G,\gamma)$ contains a $g(r,t)$-wall $(W,\gamma)$ such that $\mct_W$ is a truncation of $\mct$.
	We then apply Theorem \ref{flatwallundirectedtheorem} to $r, t$, and $(W,\gamma)$ to obtain one of its outcomes (1)-(3).
	
	First suppose outcome (1) holds, that there exists a $\Gamma$-odd $K_{4k}$-model $\mu$ in $(G,\gamma)$.
	Take $k$ disjoint $K_4$-submodels $\mu_1,\dots,\mu_k$ of $\mu$. 
	Since $\mu$ is $\Gamma$-odd, we obtain a $\Gamma$-nonzero cycle in each $\mu_i$, hence a packing of $k$ $\Gamma$-nonzero cycles, a contradiction. 
	Similarly, in outcome (2)-(a), a facially $\Gamma$-odd $50r^{12}$-wall contains $k$ disjoint bricks, each being a $\Gamma$-nonzero cycle.
	
	Next suppose outcome (2)-(b) holds, that there exists a strongly $\Gamma$-bipartite wall $(W_0,\gamma)$ and a pure $\Gamma$-odd linkage $\mcl=\{L_1,\dots,L_{3k}\}$ of $(W_0,\gamma)$.
  By the definition of strongly $\Gamma$-bipartite walls and $\Gamma$-odd linkages, we may assume, after possibly applying a sequence of shifting operations in $(G,\gamma)$, that 
	\begin{equation} \label{eqn:oddlinkage}
		\text{every $b(W_0)$-path in $(W_0,\gamma)$ is $\Gamma$-zero and every path in $\mcl$ is $\Gamma$-nonzero.} \tag{$\star$}
	\end{equation}
	Let $R$ denote the first horizontal path of $W_0$.
	
	If $\mcl$ is in series, then clearly $R \cup (\cup\mcl)$ contains $k$ disjoint $\Gamma$-nonzero cycles, each of which consists of a path $L$ in $\mcl$ (which is $\Gamma$-nonzero) and the subpath of $R$ connecting two endpoints of $L$ (which is $\Gamma$-zero by \eqref{eqn:oddlinkage}).
	If $\mcl$ is nested, then we can connect the endpoints of each path $L_i$ in $\mcl$ by using the two vertical paths of $W_0$ that contain the two endpoints of $L_i$ and the $i$-th horizontal path, where $L_i$ is the $i$-th path in $\mcl$ from the center of the nest.
	This again gives $k$ disjoint $\Gamma$-nonzero cycles by \eqref{eqn:oddlinkage}.
	
	Now suppose that $\mcl$ is crossing.
	Let $x_i$ and $y_i$ denote the left and right endpoint of $L_i$ respectively.
	We may assume that $x_i$ is to the left of $x_j$ for all $i<j$.
	
	First consider the case that $\Gamma$ has no element of order two.
	Then we can find $k$ disjoint $\Gamma$-nonzero cycles in $R \cup (\cup\mcl)$ as follows.
	For $i \in [k]$, define the pairwise disjoint subgraphs
	$$H_i := x_{3i-2}Rx_{3i} \cup L_{3i-2} \cup L_{3i-1} \cup L_{3i} \cup y_{3i-2}Ry_{3i}.$$
	Then each $H_i$ contains a $\Gamma$-nonzero cycle; otherwise, since $b(W_0)$-paths in $W_0$ are $\Gamma$-zero, we have $\gamma(L_{3i-2}) = -\gamma(L_{3i-1})$, $\gamma(L_{3i-1})=-\gamma(L_{3i})$, and $\gamma(L_{3i-2})=-\gamma(L_{3i})$ which implies $2\gamma(L_{3i})=0$, a contradiction.
	
	For a general abelian group $\Gamma$, we settle for a half-integral packing of $\Gamma$-nonzero cycles.
	There exist $2k$ paths $Q_1,\dots,Q_{2k}$ where $Q_i$ is the unique $x_i$-$y_i$-path in the union of the two vertical paths of $W_0$ containing $x_i$ or $y_i$ and the $(i+1)$-th horizontal path of $W_0$.
	By \eqref{eqn:oddlinkage}, $Q_i$ is $\Gamma$-zero and $L_i$ is $\Gamma$-nonzero for all $i\in[2k]$. 
	Hence, $\{Q_i\cup L_i: i\in[2k]\}$ is a half-integral packing of $\Gamma$-nonzero cycles of size $k$.
	
	So we may assume that outcome (3) of Theorem \ref{flatwallundirectedtheorem} holds; that is, there exists $Z \subseteq V(G)$ with $|Z| \leq h(r,t)$ such that the $\mct_W$-large 3-block $(B,\gamma_B)$ of $(G-Z,\gamma)$ is $\Gamma$-bipartite.
	Recall that $B$ is 3-connected. By Lemma \ref{lemma3connshiftequivalent}, there is a sequence of shifting operations on $(B,\gamma_B)$ resulting in $(B,\bm 0)$.
	This implies that $(G-Z,\gamma)$ is shift-equivalent (by the same shifting operations) to $(G-Z,\gamma')$ for some $\gamma'$ where every $V(B)$-path in $(G-Z,\gamma')$ is $\Gamma$-zero.
	
	We now show that $Z$ is a hitting set for $\Gamma$-nonzero cycles in $(G,\gamma)$.
	If a cycle $C$ in $G-Z$ has at least two vertices in $V(B)$, then $E(C)$ can be partitioned into $V(B)$-paths.
	But since every $V(B)$-path in $(G-Z,\gamma')$ is $\Gamma$-zero, $C$ is $\Gamma$-zero in $(G-Z,\gamma')$, whence $C$ is $\Gamma$-zero in $(G-Z,\gamma)$ as well (since shifting operations do not change the weights of cycles).
	So if $C$ is a $\Gamma$-nonzero cycle in $(G-Z,\gamma)$, then $C$ has at most one vertex in $V(B)$.
  This implies that $C$ is contained in a $V(B)$-bridge of $G-Z$ (since $C-V(B)$ is in a connected component of $G-Z-V(B)$).
	Let $A \subseteq V(B)$ denote the set of (at most two) attachments of this $V(B)$-bridge.
	This gives a $(h(r,t)+2)$-separation $(X,Y)$ of $G$ where $V(X\cap Y) = Z\cup A$, $C\subseteq X$, and $V(B) \subseteq V(Y)$.
	Moreover, we have $(X,Y) \in \mct_W$ since $(B,\gamma_B)$ is the $\mct_W$-large 3-block of $(G-Z,\gamma)$.
	But this contradicts the assumption that no $\Gamma$-nonzero cycle is contained in the small side of a separation in $\mct_W$.
	Therefore, $Z$ is a hitting set for $\Gamma$-nonzero cycles in $(G,\gamma)$, again a contradiction as $|Z|\leq h(r,t) \le f(k)$. 
\end{proof}

What we have essentially shown in the proof of Theorem \ref{nzcyclesEPtheorem} is that, if a $\Gamma$-labelled graph does not contain a large packing of $\Gamma$-nonzero cycles nor a small hitting set for $\Gamma$-nonzero cycles, then it contains a large strongly $\Gamma$-bipartite wall with a large crossing $\Gamma$-odd linkage.
This last condition, when specialized to $\Gamma = \mbz/2\mbz$, is equivalent to a large \textit{Escher wall} (see \cite{Ree99}); we have thus recovered Reed's theorem (Theorem 1 in \cite{Ree99}) that every graph contains either a large packing of odd cycles, a small hitting set for odd cycles, or a large Escher wall.

\section{Cycles of length $\ell$ mod $m$}
\label{sec:oddprimecycles}
In this section we prove Theorem \ref{cyclesoddprimepowertheorem}, that the family of cycles of length $\ell$ mod $m$ satisfies the Erd\H{o}s-P\'osa property if $m$ is an odd prime power.

A \emph{cycle-chain} of \emph{length} $l$ is a tuple $(P,Q_1,\dots,Q_l)$ consisting of a \emph{core path} $P$ and $l$ disjoint $V(P)$-paths $Q_1,\dots, Q_l$ such that the subpaths $P_i$ of $P$ having the same endpoints as $Q_i$ are pairwise disjoint.
A cycle-chain $(P,Q_1,\dots,Q_l)$ in a $\Gamma$-labelled graph $(G,\gamma)$ is \emph{$\Gamma$-nonzero} if $\gamma(Q_i)\neq \gamma(P_i)$ for all $i \in [l]$.
A \emph{closed} cycle-chain $(C,Q_1,\dots,Q_l)$ of length $l$ consists of a cycle $C$ and $l$ disjoint $V(C)$-paths $Q_1,\dots,Q_l$ such that there is a choice of disjoint subpaths $C_i$ of $C$ with the same endpoints as $Q_i$.
It is $\Gamma$-nonzero if $\gamma(Q_i) \neq \gamma(C_i)$ for all $i \in [l]$.

Long $\Gamma$-nonzero closed cycle-chains can be used to find cycles of certain desired weights.
For example, if $\Gamma=\mbz/p\mbz$ for prime $p$, then it is easy to see that given a long enough $\Gamma$-nonzero closed cycle-chain $(C,Q_1,\dots,Q_l)$, we can reroute $C$ through some of the paths $Q_i$ to obtain a cycle of every possible weight in $\Gamma$ (since every nonzero element of $\Gamma$ is a generator).
The next few lemmas show that, if $\Gamma$ has no element of order two, then we can find many disjoint long $\Gamma$-nonzero closed cycle-chains in outcomes (1) and (2) of Theorem \ref{flatwallundirectedtheorem}.

\begin{lemma}
\label{longnzcyclechaincliquelemma}
Let $\Gamma$ be an abelian group with no element of order two.
Let $l \in \mbn$ and let $\mu$ be a $\Gamma$-odd $K_{5l+1}$-model.
Then there is a $\Gamma$-nonzero cycle-chain $\mcc$ of length $l$ contained in $\mu$ whose core path is a $\mu(v_1)$-$\mu(v_{5l+1})$-path.
Moreover, the cycles in $\mcc$ are disjoint from $\mu(v_1)\cup \mu(v_{5l+1})$.
\end{lemma}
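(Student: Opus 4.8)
Write $T_i=\mu(v_i)$ for the trees of the model and $e_{ij}=\mu(v_iv_j)$ for the connecting edges. First I would fix the core path: let $P$ be the path running through $T_1,T_2,\dots,T_{5l+1}$ in this order, passing from $T_i$ to $T_{i+1}$ along $e_{i,i+1}$ and traversing each $T_i$ along the unique path of $T_i$ joining the endpoints of $e_{i-1,i}$ and $e_{i,i+1}$. Then $P\cap T_1$ and $P\cap T_{5l+1}$ are single vertices, so $P$ is a $\mu(v_1)$-$\mu(v_{5l+1})$-path contained in $\mu$. Next I would group the interior trees into $l$ blocks of four consecutive trees separated by single buffer trees: block $k$ is $\{T_{5k-3},T_{5k-2},T_{5k-1},T_{5k}\}$ and $T_{5k+1}$ is the buffer after it, which accounts for $4l+(l-1)+2=5l+1$ trees in total (with $T_1,T_{5l+1}$ the two ends). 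Since the blocks occupy pairwise disjoint intervals of indices, anything confined to the trees of one block is automatically disjoint from anything confined to another block and from $T_1\cup T_{5l+1}$; so it suffices to produce, for each $k$, a $V(P)$-path $Q_k$ contained in the subgraph of $\mu$ spanned by the four trees of block $k$, internally disjoint from $P$, such that the cycle $Q_k\cup P_k$ is $\Gamma$-nonzero, where $P_k$ is the subpath of $P$ with the same ends.

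To produce $Q_k$: by definition of a $\Gamma$-odd $K_m$-model, the union of the four trees of block $k$ together with the six connecting edges among them contains a $\Gamma$-nonzero cycle $C_k$, and as a cycle in a $K_4$-model it meets at least three of these trees. Because $P$ runs through every tree, for three of the trees met by $C_k$ I get three pairwise disjoint paths $\pi_1,\pi_2,\pi_3$, the $i$-th one lying inside one of these trees and joining a vertex $c_i\in V(C_k)$ to a vertex $a_i\in V(P)$. Let $\alpha,\beta,\delta$ be the arcs into which $c_1,c_2,c_3$ split $C_k$ and let $P'$ be the subpath of $P$ spanning $a_1,a_2,a_3$; from $C_k,\pi_1,\pi_2,\pi_3,P'$ one reads off, for each pair $i<j$ and each of the two $c_i$-$c_j$-arcs of $C_k$, a cycle $\Theta$ consisting of $\pi_i$, that arc, $\pi_j$, and the subpath of $P'$ between $a_i$ and $a_j$ --- six cycles in all, each meeting $P$ in exactly a subpath of $P$.

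I would then argue that one of these six cycles is $\Gamma$-nonzero. If all six had weight $0$, then pairing each $\Theta$ with the cycle obtained by switching to the complementary $C_k$-arc, each pair of relations sums to $2\epsilon+\gamma(C_k)$, where $\epsilon$ is the common weight of the two connecting paths plus the relevant subpath of $P'$; combining with the single relation that the ``direct'' $\Theta$ has weight $0$ gives $\gamma(C_k)=-2\epsilon$ and $\gamma(\alpha)=-\epsilon$, so $\gamma(C_k)=2\gamma(\alpha)$, and likewise $\gamma(C_k)=2\gamma(\beta)=2\gamma(\delta)$. Adding these gives $2\gamma(C_k)=3\gamma(C_k)$, so $\gamma(C_k)=0$, contradicting the choice of $C_k$. (This is the argument behind Lemma~\ref{threepathscyclelemma}.) Taking $D_k$ to be such a $\Gamma$-nonzero cycle and setting $P_k:=D_k\cap P$ and $Q_k:=(D_k-P_k)$ together with its two ends, the triple $(P,Q_1,\dots,Q_l)$ is a $\Gamma$-nonzero cycle-chain whose core path is a $\mu(v_1)$-$\mu(v_{5l+1})$-path and whose cycles $Q_k\cup P_k$ avoid $T_1\cup T_{5l+1}$, as required.

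The hard part is that $C_k$ need not be disjoint from $P$, and the three connecting paths need not be disjoint or end at distinct vertices of $P$, so the clean configuration above is not immediate: one must first reroute $C_k$ and $P$ locally inside block $k$ (the buffer tree $T_{5k+1}$ is precisely the room reserved for this), handling by cases how $C_k$ meets $P$ and how $C_k$ distributes over its trees, and then verify all the simple-cycle and disjointness conditions. The hypothesis that $\Gamma$ has no element of order two enters in this case analysis (for instance via Lemma~\ref{threepathscyclelemma}(a), where one needs $2\gamma(Q)\neq 0$ for the relevant paths $Q$), and carrying it out carefully is the bulk of the proof.
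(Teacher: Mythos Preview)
Your proposal has a real gap, and it stems from the decision to fix the core path $P$ first. Because your $P$ runs through \emph{every} tree $T_2,\dots,T_{5l}$ in order, it occupies exactly the trees in which the nonzero cycle $C_k$ must live, so $C_k$ and $P$ will typically overlap and your three connecting paths $\pi_i$ need not be internally disjoint from $P$ or from each other. You acknowledge this and defer it to an unspecified case analysis with rerouting through the ``buffer'' tree $T_{5k+1}$, but you never carry this out, and it is not clear it can be done cleanly: $T_{5k+1}$ is already used by $P$, so it is not spare room, and for $k=l$ the tree $T_{5l+1}$ is one of the two trees the cycles must avoid. The six-cycle argument you sketch is fine \emph{once} the clean theta configuration is in hand, but getting that configuration is precisely the content of the lemma, and you have not done it.

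The paper sidesteps all of this by \emph{not} fixing the core path in advance. It treats each block as a $K_6$-submodel on $v_{5(i-1)+1},\dots,v_{5i+1}$, takes the nonzero cycle $C$ inside the middle four trees $v_{5(i-1)+2},\dots,v_{5(i-1)+5}$, and then approaches $C$ from the two outer trees: a $\mu(v_{5(i-1)+1})$-$C$-path $R$ through $\mu[v_{5(i-1)+1},v_{5(i-1)+2}]$, and a $\mu(v_{5i+1})$-$C$-path $P_j$ through $\mu[v_j,v_{5i+1}]$ for some $j\in\{5(i-1)+3,5(i-1)+4\}$. Since the two outer trees are not in the $K_4$ containing $C$, the paths $R$ and $P_j$ are automatically disjoint from each other and meet $C$ only at their endpoints; no rerouting is needed. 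The core path for this block is then $R$, an arc of $C$, and $P_j$, with the complementary arc of $C$ serving as $Q_i$. The no-order-two hypothesis enters exactly once, via Lemma~\ref{threepathscyclelemma}(a), to guarantee that for some choice of $j$ the two arcs of $C$ between the endpoint of $R$ and $w_j$ have different weights. Concatenating these per-block chains through the shared trees $\mu(v_{5i+1})$ gives the result with no case analysis at all.
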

\begin{proof}
We first prove the lemma for $l=1$.
Since $\mu$ is $\Gamma$-odd, there is a $\Gamma$-nonzero cycle $C$ in $\mu[v_2,v_3,v_4,v_5]$, and since $C$ intersects at least three of the trees in $\{\mu(v_2),\mu(v_3),\mu(v_4), \mu(v_5)\}$, we may assume without loss of generality that $C$ intersects $\mu(v_2),\mu(v_3)$, and $\mu(v_4)$.
For each $i \in \{2,3,4\}$, let $P_i$ denote the unique $\mu(v_6)$-$C$-path in $\mu[v_i,v_6]$ and let $w_i$ denote the endpoint of $P_i$ in $C$.
Since $C$ is a $\Gamma$-nonzero cycle, without loss of generality, we may assume that the $w_3$-$w_4$-path in $C$ that is disjoint from $w_2$ is $\Gamma$-nonzero.
Let $R$ denote the unique $\mu(v_1)$-$C$-path in $\mu[v_1,v_2]$.
Then, by Lemma \ref{threepathscyclelemma}(a) and by the assumption that $\Gamma$ has no element of order two, there is a $j \in \{3,4\}$ such that the two $\mu(v_1)$-$\mu(v_6)$-paths in $R \cup C \cup P_j$ have different weights.
This gives the desired $\Gamma$-nonzero cycle-chain of length 1 whose core path is a $\mu(v_1)$-$\mu(v_6)$-path and whose cycle $C$ is disjoint from $\mu(v_1)\cup\mu(v_6)$ (as $C$ is contained in $\mu[v_2,v_3,v_4,v_5]$).

Now for $l > 1$, we apply the $l=1$ case to the $K_6$-submodel 
\[\mu_i := \mu[v_{5(i-1)+1},v_{5(i-1)+2},\dots,v_{5(i-1)+6}]\]
for each $i\in[l]$ to obtain a $\Gamma$-nonzero cycle-chain $\mcc_i$ of length 1 contained in $\mu_i$ whose core path is a $\mu(v_{5(i-1)+1})$-$\mu(v_{5i+1})$-path and whose cycle is disjoint from $\mu(v_{5(i-1)+1})\cup \mu(v_{5i+1})$. 
Note that the core path is internally disjoint from $\mu(v_{5(i-1)+1}) \cup \mu(v_{5i+1})$.
Hence, for each $i\in[l-1]$, the unique path in $\mu(v_{5i+1})$ between the endpoints of the core paths of $\mcc_i$ and $\mcc_{i+1}$ in $\mu(v_{5i+1})$ is internally disjoint from every $\mcc_j$, $j\in[l]$.
By connecting consecutive cycle-chains in the trees $\mu(v_{5i+1})$, $i \in [l-1]$, we obtain a $\Gamma$-nonzero cycle-chain of length $l$  contained in $\mu$ whose core path is a $\mu(v_1)$-$\mu(v_{5l+1})$-path and whose cycles are disjoint from $\mu(v_1)\cup \mu(v_{5l+1})$.
\end{proof}

\begin{cor}\label{closedcccliquecor}
Let $\Gamma$ be an abelian group with no element of order two.
Let $l,k\in \mbn$ and let $\mu$ be a $\Gamma$-odd $K_{(5l+1)k}$-model.
Then $\mu$ contains $k$ disjoint $\Gamma$-nonzero closed cycle-chains each of length $l$.
\end{cor}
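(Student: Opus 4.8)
The plan is to split the $K_{k(5l+1)}$-model into $k$ disjoint $K_{5l+1}$-submodels, extract an open nonzero cycle-chain of length $l$ from each one using Lemma~\ref{longnzcyclechaincliquelemma}, and then close each core path into a cycle by means of the model edge joining the first and last trees of the corresponding submodel.

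Concretely, I would fix a partition $\{v_1,\dots,v_{k(5l+1)}\} = U_1 \cup \dots \cup U_k$ into $k$ sets of size $5l+1$. For each $j\in[k]$ the restriction $\mu_j$ of $\mu$ to $U_j$ is again a $\Gamma$-odd $K_{5l+1}$-model, since any four distinct indices in $U_j$ are four distinct indices of $K_{k(5l+1)}$ and $\mu_j[v_i,v_{i'},v_{i''},v_{i'''}] = \mu[v_i,v_{i'},v_{i''},v_{i'''}]$ contains a nonzero cycle. Writing $v_{a_j}$ and $v_{b_j}$ for the first and last elements of $U_j$, Lemma~\ref{longnzcyclechaincliquelemma} applied to $\mu_j$ yields a nonzero cycle-chain $(P^j, Q^j_1,\dots,Q^j_l)$ whose core path $P^j$ is a $\mu(v_{a_j})$-$\mu(v_{b_j})$-path and whose cycles are disjoint from $\mu(v_{a_j}) \cup \mu(v_{b_j})$.

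To turn $P^j$ into a cycle, let $xy = \mu(v_{a_j} v_{b_j})$ with $x \in \mu(v_{a_j})$ and $y \in \mu(v_{b_j})$, and let $a,b$ be the endpoints of $P^j$ in $\mu(v_{a_j})$ and $\mu(v_{b_j})$. Since these trees are connected, I would append to $P^j$ the $a$-$x$-path in $\mu(v_{a_j})$, the edge $xy$, and the $y$-$b$-path in $\mu(v_{b_j})$ to obtain a cycle $C^j$. The subpaths $P^j_i$ of $P^j$ and the paths $Q^j_i$ all avoid $\mu(v_{a_j})\cup\mu(v_{b_j})$ by the ``moreover'' clause of Lemma~\ref{longnzcyclechaincliquelemma}, hence avoid the part of $C^j$ not on $P^j$; so the $P^j_i$ remain pairwise disjoint subpaths of $C^j$, each $Q^j_i$ remains a $V(C^j)$-path with the same endpoints as $P^j_i$, and $\gamma(Q^j_i) \neq \gamma(P^j_i)$ for all $i$. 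Thus $(C^j, Q^j_1, \dots, Q^j_l)$, with $C^j_i := P^j_i$, is a closed nonzero cycle-chain of length $l$. Finally $C^j \cup Q^j_1 \cup \dots \cup Q^j_l \subseteq \mu[U_j]$, and $\mu[U_j] \cap \mu[U_{j'}] = \emptyset$ for $j \neq j'$ (the trees are distinct and each connecting edge has both endpoints in the trees of its own block), so the $k$ chains are pairwise disjoint. The only step requiring care is checking that closing the core path does not break the cycle-chain structure, which is precisely what the extra guarantee in Lemma~\ref{longnzcyclechaincliquelemma} is designed to supply; I do not anticipate any genuine obstacle beyond this bookkeeping.
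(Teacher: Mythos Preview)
Your proof is correct and follows essentially the same approach as the paper: partition into $k$ disjoint $K_{5l+1}$-submodels, apply Lemma~\ref{longnzcyclechaincliquelemma} to each, and close the core path using the model edge between the two end trees. Your version is more explicit about why the closing step preserves the cycle-chain structure, which the paper leaves implicit.
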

\begin{proof}
Let $\mu_1,\dots,\mu_k$ be $k$ disjoint $K_{5l+1}$-submodels of $\mu$ and, for each $i\in[k]$, let $\mu_i(x^i)$ and $\mu_i(y^i)$ denote two trees of $\mu_i$. 
By Lemma \ref{longnzcyclechaincliquelemma}, each $\mu_i$ contains a $\Gamma$-nonzero cycle-chain $\mcc_i$ of length $l$ whose core path is a $\mu_i(x^i)$-$\mu_i(y^i)$-path and whose cycles are disjoint from $\mu_i(x^i) \cup \mu_i(y^i)$.
By connecting the two ends of the core path through $\mu_i[x^i,y^i]$, we obtain $k$ disjoint $\Gamma$-nonzero closed cycle-chains, each of length $l$.
\end{proof}

\begin{lemma}
\label{longnzcyclechainwalllemma}
Let $\Gamma$ be an abelian group with no element of order two.
Let $(W,\gamma)$ be a facially $\Gamma$-odd $3l \times 2$-wall and let $P^{(h)}_i$ denote the $i$-th horizontal path of $W$ for each $i \in [3l+1]$.
Then there is a $\Gamma$-nonzero cycle-chain $\mcc$ of length $l$ in $(W,\gamma)$ whose core path is a $P^{(h)}_1$-$P^{(h)}_{3l+1}$-path.
Moreover, the cycles in $\mcc$ are disjoint from $P^{(h)}_1\cup P^{(h)}_{3l+1}$.
\end{lemma}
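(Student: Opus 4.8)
The plan is to settle the case $l=1$ and then bootstrap to general $l$ by cutting $W$ into $l$ vertically stacked $3\times 2$-subwalls, exactly as Lemma~\ref{longnzcyclechaincliquelemma} is upgraded from its $l=1$ instance. For $l=1$ I would take $C$ to be the brick of $W$ contained in $P^{(h)}_2\cup P^{(h)}_3\cup P^{(v)}_1\cup P^{(v)}_2$: since $W$ is facially $\Gamma$-odd, $C$ is a nonzero cycle, and it is disjoint from $P^{(h)}_1\cup P^{(h)}_4$. Write $x_1,\dots,x_6$ for the branch vertices of $C$ in cyclic order, with $x_1x_2,x_2x_3\subseteq P^{(h)}_2$, with $x_4x_5,x_5x_6\subseteq P^{(h)}_3$, and with $x_3x_4$ and $x_6x_1$ the two remaining subpaths of $C$. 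The structural fact I would establish is that $x_1,x_2,x_3,x_4$ can each be joined to $P^{(h)}_1$, and $x_3,x_4,x_5,x_6$ to $P^{(h)}_4$, by a path meeting $C$ only in its endpoint on $C$ and internally avoiding $P^{(h)}_1\cup P^{(h)}_4$ (using portions of the vertical paths above and below $C$, together with short detours along $P^{(h)}_2,P^{(h)}_3$ or around the adjacent brick); and — the point that needs care — that for each ``entry'' $v\in\{x_1,\dots,x_4\}$ and ``exit'' $w\in\{x_3,\dots,x_6\}$ we actually use, these attaching paths can be chosen disjoint.

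Given one such pair $(v,w)$, concatenating the attaching path to $v$, a $v$-$w$-arc $P_1$ of $C$, and the attaching path from $w$ produces a $P^{(h)}_1$-$P^{(h)}_4$-path $P$, and the complementary $v$-$w$-arc $Q_1$ is then a $V(P)$-path with $P_1\cup Q_1=C$; so $(P,Q_1)$ is the desired length-$1$ cycle-chain as soon as $\gamma(P_1)\ne\gamma(Q_1)$. Hence it suffices to find an entry-exit pair whose two $v$-$w$-arcs have unequal weight. If one of $x_3x_4,x_4x_5,x_5x_6$ (or a consecutive union of them) has nonzero weight, I would apply Lemma~\ref{threepathscyclelemma}(a) with $w_1=x_1$ and $w_2,w_3$ the endpoints of that arc inside $\{x_3,\dots,x_6\}$ — here the hypothesis that $\Gamma$ has no element of order two gives $2\gamma(Q_1)\ne 0$ — to obtain $j$ with the two $x_1$-$w_j$-arcs unequal; symmetrically, with $w_1=x_6$, if one of $x_1x_2,x_2x_3,x_3x_4$ is nonzero. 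Otherwise $x_1x_2,\dots,x_5x_6$ are all zero, so $\gamma(C)=\gamma(x_6x_1)\ne 0$ and the pair $(x_1,x_6)$ works outright, its two arcs weighing $\gamma(C)$ and $0$.

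For $l>1$ I would pass to the compact $3\times 2$-subwalls $W_1,\dots,W_l$ of $W$, where $W_i$ spans the horizontal paths $P^{(h)}_{3i-2},\dots,P^{(h)}_{3i+1}$ together with all three vertical paths; each $W_i$ inherits being facially $\Gamma$-odd. The $l=1$ case applied to $W_i$ yields a length-$1$ nonzero cycle-chain $(P^{(i)},Q^{(i)})$ whose core path runs between $P^{(h)}_{3i-2}$ and $P^{(h)}_{3i+1}$ and whose cycle $C^{(i)}$ lies in the two rows strictly in between. Since consecutive core paths $P^{(i)},P^{(i+1)}$ meet $P^{(h)}_{3i+1}$ only at endpoints, I would splice them along subpaths of these horizontal paths into a single $P^{(h)}_1$-$P^{(h)}_{3l+1}$-path $P$; and because the cycles $C^{(i)}$ occupy pairwise disjoint pairs of rows, each disjoint from all the spliced-in horizontal paths and from $P^{(h)}_1\cup P^{(h)}_{3l+1}$, the tuple $(P,Q^{(1)},\dots,Q^{(l)})$ is a nonzero cycle-chain of length $l$ with cycles disjoint from $P^{(h)}_1\cup P^{(h)}_{3l+1}$, as wanted.

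The hard part is the $l=1$ case: verifying that enough branch vertices of $C$ are reachable from $P^{(h)}_1$ and from $P^{(h)}_4$ by pairwise disjoint paths, and then the arc-weight dichotomy above — which is precisely where the no-element-of-order-two hypothesis is used, mirroring its role in Lemma~\ref{longnzcyclechaincliquelemma}. Once a single good entry-exit pair is secured, and the subwall splicing is checked to preserve disjointness, everything else is routine bookkeeping.
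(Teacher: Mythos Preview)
Your proposal is correct and follows the paper's overall strategy: handle $l=1$ by finding a good entry/exit pair on a single nonzero brick, then stack $l$ such cycle-chains through the compact $3\times 2$-subwalls $W_1,\dots,W_l$ and splice the core paths along the shared horizontal paths. The $l>1$ reduction is exactly as in the paper.

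For $l=1$, the paper takes a shorter route. Instead of working with all six branch vertices of $B_{2,1}$ and case-splitting via Lemma~\ref{threepathscyclelemma}(a), the paper singles out just the three vertices $w_1,w_2,w_3$ of $B_{2,1}$ that have degree $3$ in $B_{1,2}\cup B_{2,1}\cup B_{2,2}$ (in your labelling, these are $x_2,x_3,x_4$). Since $B_{2,1}$ is nonzero, Lemma~\ref{threepathscyclelemma}(b) directly yields a pair $w_i,w_j$ whose two arcs in $B_{2,1}$ have different weights, and for each of the three possible pairs an explicit $P^{(h)}_1$-$P^{(h)}_4$ core path through $w_i,w_j$ is routed using the adjacent bricks $B_{1,2}$ and $B_{2,2}$ (see Figure~\ref{Fig1chainwall}). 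This is cleaner --- three cases dispatched by a picture instead of your longer dichotomy and disjointness bookkeeping --- and, notably, uses only part (b) of Lemma~\ref{threepathscyclelemma}, so the no-element-of-order-two hypothesis is never actually invoked in this proof. Your argument spends that hypothesis through part (a); it works, but is unnecessary here.
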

\begin{proof}
We first prove the lemma for $l=1$.
Let $(W,\gamma)$ be a facially $\Gamma$-odd $3\times 2$-wall.
Let $B_{i,j}$ denote the $(i,j)$-th brick of $W$ for $i\in[3]$ and $j\in[2]$.
We assume without loss of generality that the wall is oriented in such a way that $B_{2,1}$ shares an edge with $B_{1,1}$ and $B_{1,2}$.
Let $w_1,w_2$, and $w_3$ denote the three vertices on $B_{2,1}$ that have degree 3 in $B_{1,2}\cup B_{2,1}\cup B_{2,2}$.
Since $W$ is facially $\Gamma$-odd, by Lemma \ref{threepathscyclelemma}(b), there is a distinct pair $i,j \in [3]$ such that the two $w_i$-$w_j$-paths in $B_{2,1}$ have different weights.
In each of the three possible cases, it is easy to see that there is a $\Gamma$-nonzero cycle-chain of length 1 whose core path is a $P^{(h)}_1$-$P^{(h)}_4$-path and whose cycle is $B_{2,1}$ (see Figure \ref{Fig1chainwall}).

Now for $l > 1$, let $W_i$ be the $3\times 2$-subwall of $W$ whose first and last horizontal path is $P^{(h)}_{3i-2}$ and $P^{(h)}_{3i+1}$ respectively.
We apply the $l=1$ case of the lemma to each $W_i$ to obtain a $\Gamma$-nonzero cycle-chain of length 1 whose core path is a $P^{(h)}_{3i-2}$-$P^{(h)}_{3i+1}$-path and whose cycle is disjoint from $P^{(h)}_{3i-2} \cup P^{(h)}_{3i+1}$.
Connecting consecutive cycle-chains in $P^{(h)}_{3i+1}$, $i \in [l-1]$, we obtain the desired $\Gamma$-nonzero cycle-chain of length $l$.
\end{proof}

\begin{cor}\label{closedccwallcor}
Let $\Gamma$ be an abelian group with no element of order two.
Let $(W,\gamma)$ be a facially $\Gamma$-odd $3l\times(4k-1)$-wall.
Then $(W,\gamma)$ contains $k$ disjoint $\Gamma$-nonzero closed cycle-chains of length $l$.
\end{cor}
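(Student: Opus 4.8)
The plan is to imitate the proof of Corollary~\ref{closedcccliquecor}: I would split $W$ into $k$ pairwise disjoint pieces, extract a nonzero cycle-chain of length $l$ from each piece using Lemma~\ref{longnzcyclechainwalllemma}, and then close up each core path into a cycle without disturbing its chain. The wall $W$ has $3l+1$ horizontal paths $P^{(h)}_1,\dots,P^{(h)}_{3l+1}$ and $4k$ vertical paths $P^{(v)}_1,\dots,P^{(v)}_{4k}$; the factor $4k-1$ gives exactly $k$ disjoint blocks of four consecutive vertical paths, of which three are used to run Lemma~\ref{longnzcyclechainwalllemma} and one is reserved as a spare column in which to close the chain.

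Concretely, for each $i\in[k]$ I would let $W_i$ be the compact $3l\times2$-subwall of $W$ on the vertical paths $P^{(v)}_{4i-3},P^{(v)}_{4i-2},P^{(v)}_{4i-1}$ (with all of the horizontal paths of $W$). Every brick of $W_i$ is a brick of $W$, so $W_i$ is again facially $\Gamma$-odd, and Lemma~\ref{longnzcyclechainwalllemma} applied to $(W_i,\gamma)$ yields a nonzero cycle-chain $\mcc_i=(P_i,Q_{i,1},\dots,Q_{i,l})$ contained in $W_i$ whose core path $P_i$ is a $P^{(h)}_1$-$P^{(h)}_{3l+1}$-path and whose cycles $Q_{i,1},\dots,Q_{i,l}$ are disjoint from $P^{(h)}_1\cup P^{(h)}_{3l+1}$. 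Let $x_i\in V(P^{(h)}_1)$ and $y_i\in V(P^{(h)}_{3l+1})$ be the endpoints of $P_i$. To close $\mcc_i$ I would let $R_i$ follow $P^{(h)}_1$ from $x_i$ to its intersection with $P^{(v)}_{4i}$, then traverse $P^{(v)}_{4i}$ downward to $P^{(h)}_{3l+1}$, then follow $P^{(h)}_{3l+1}$ back to $y_i$. Since $\mcc_i\subseteq W_i$ lies to the left of $P^{(v)}_{4i}$, and since (by the two ``moreover'' clauses of Lemma~\ref{longnzcyclechainwalllemma}) $\mcc_i$ meets $P^{(h)}_1$ only in $x_i$ and $P^{(h)}_{3l+1}$ only in $y_i$, the path $R_i$ is disjoint from $\mcc_i$ except at $x_i$ and $y_i$. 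Hence $C_i:=P_i\cup R_i$ is a cycle containing $P_i$, and replacing $P_i$ by $C_i$ converts $\mcc_i$ into a closed nonzero cycle-chain of length $l$: the subpaths of $P_i$ with the same endpoints as the $Q_{i,j}$ are subpaths of $C_i$, remain pairwise disjoint, and keep their weights, so $\gamma(Q_{i,j})$ still differs from the weight of the corresponding subpath of $C_i$.

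Finally, I would note that $W_i\cup R_i$ is contained in the strip of $W$ bounded by $P^{(v)}_{4i-3}$ and $P^{(v)}_{4i}$, and that these strips are pairwise disjoint for distinct $i$; hence the closed cycle-chains $(C_i,Q_{i,1},\dots,Q_{i,l})$, $i\in[k]$, are pairwise disjoint, which is what we want. The argument is routine once Lemma~\ref{longnzcyclechainwalllemma} is available; the only step needing care — and the only place the hypotheses of that lemma are really used — is verifying that the closing path $R_i$ does not re-enter the cycle-chain, which is exactly why we need the lemma to keep its chain inside the $3l\times2$-subwall and its cycles off the top and bottom horizontal paths.
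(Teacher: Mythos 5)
Your proposal is correct and is essentially the paper's own argument: the paper likewise partitions the $4k$ vertical paths into $k$ groups of four, runs Lemma~\ref{longnzcyclechainwalllemma} on a compact $3l\times 2$-subwall in each group, and closes the core path using the spare (fourth) vertical path. Your write-up just makes explicit the disjointness checks that the paper leaves implicit.
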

\begin{proof}
We have $k$ disjoint compact $3l\times 3$-subwalls $(W_1,\gamma),\dots,(W_k,\gamma)$ of $(W,\gamma)$, each of which are facially $\Gamma$-odd.
We find a $\Gamma$-nonzero cycle-chain of length $l$ in a compact $3l\times 2$-subwall of $W_i$ and connect its ends using the extra vertical path to obtain a $\Gamma$-nonzero closed cycle-chain of length $l$ in $(W_i,\gamma)$ for each $i\in[k]$.
\end{proof}

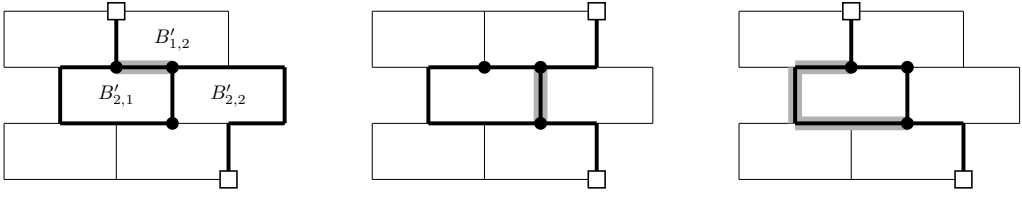
\begin{figure}[t]
	\centering
	\resizebox{0.9\textwidth}{!}{\begin{tikzpicture}

\colorlet{hellgrau}{black!30!white}
\tikzstyle{smallvx}=[thick,circle,inner sep=0.cm, minimum size=2mm, fill=white, draw=black]
\tikzstyle{smallblackvx}=[thick,circle,inner sep=0.cm, minimum size=2mm, fill=black, draw=black]
\tikzstyle{squarevx}=[thick,rectangle,inner sep=0.cm, minimum size=3mm, fill=white, draw=black]
\tikzstyle{hedge}=[line width=2pt]
\tikzstyle{markline}=[draw=hellgrau,line width=7pt]

\def\wallheight{3}
\def\wallwidth{2}
\def\gridwidth{2*\wallwidth+1}

\pgfmathtruncatemacro{\penultimaterow}{\wallheight-1}
\pgfmathtruncatemacro{\lastrowshift}{mod(\wallheight+1,2)}

\draw[] (0,0) -- (2*\wallwidth,0);
\draw[] (\lastrowshift,\wallheight) -- (\lastrowshift+2*\wallwidth,\wallheight);
\foreach \i in {1,...,\penultimaterow} {
	\draw[] (0,\i) -- (\gridwidth,\i);
}

\foreach \i in {0,...,\wallwidth}{
	\foreach \j in {1,...,\wallheight} {
		\draw[] ({2*\i+Mod(\j+1,2)},\j-1) -- ({2*\i+Mod(\j+1,2)},\j);
	}
}

\draw[markline] (2,2) -- (3,2);

\node[smallblackvx] at (2,2){};
\node[smallblackvx] at (3,2){};
\node[smallblackvx] at (3,1){};

\draw[hedge] (1,1) -- (3,1);
\draw[hedge] (3,2) -- (3,1);
\draw[hedge] (3,2) -- (1,2);
\draw[hedge] (1,1) -- (1,2);

\node at (2,1.5){$B_{2,1}'$};
\node at (4,1.5){$B_{2,2}'$};
\node at (3,2.5){$B_{1,2}'$};

\draw[hedge] (4,0) -- (4,1);
\draw[hedge] (4,1) -- (5,1); 
\draw[hedge] (5,1) -- (5,2);
\draw[hedge] (3,2) -- (5,2);
\draw[hedge] (2,2) -- (2,3);
\node[squarevx] at (4,0){};
\node[squarevx] at (2,3){};

\end{tikzpicture}
\qquad\qquad
\begin{tikzpicture}

\colorlet{hellgrau}{black!30!white}
\tikzstyle{smallvx}=[thick,circle,inner sep=0.cm, minimum size=2mm, fill=white, draw=black]
\tikzstyle{smallblackvx}=[thick,circle,inner sep=0.cm, minimum size=2mm, fill=black, draw=black]
\tikzstyle{squarevx}=[thick,rectangle,inner sep=0.cm, minimum size=3mm, fill=white, draw=black]
\tikzstyle{hedge}=[line width=2pt]
\tikzstyle{markline}=[draw=hellgrau,line width=7pt]

\def\wallheight{3}
\def\wallwidth{2}
\def\gridwidth{2*\wallwidth+1}

\pgfmathtruncatemacro{\penultimaterow}{\wallheight-1}
\pgfmathtruncatemacro{\lastrowshift}{mod(\wallheight+1,2)}

\draw[] (0,0) -- (2*\wallwidth,0);
\draw[] (\lastrowshift,\wallheight) -- (\lastrowshift+2*\wallwidth,\wallheight);
\foreach \i in {1,...,\penultimaterow} {
	\draw[] (0,\i) -- (\gridwidth,\i);
}

\foreach \i in {0,...,\wallwidth}{
	\foreach \j in {1,...,\wallheight} {
		\draw[] ({2*\i+Mod(\j+1,2)},\j-1) -- ({2*\i+Mod(\j+1,2)},\j);
	}
}

\draw[markline] (3,1) -- (3,2);

\node[smallblackvx] at (2,2){};
\node[smallblackvx] at (3,2){};
\node[smallblackvx] at (3,1){};

\draw[hedge] (1,1) -- (3,1);
\draw[hedge] (3,2) -- (3,1);
\draw[hedge] (3,2) -- (1,2);
\draw[hedge] (1,1) -- (1,2);

\draw[hedge] (4,0) -- (4,1);
\draw[hedge] (4,1) -- (3,1); 
\draw[hedge] (4,2) -- (4,3);
\draw[hedge] (3,2) -- (4,2);
\node[squarevx] at (4,0){};
\node[squarevx] at (4,3){};

\end{tikzpicture}
\qquad\qquad
\begin{tikzpicture}

\colorlet{hellgrau}{black!30!white}
\tikzstyle{smallvx}=[thick,circle,inner sep=0.cm, minimum size=2mm, fill=white, draw=black]
\tikzstyle{smallblackvx}=[thick,circle,inner sep=0.cm, minimum size=2mm, fill=black, draw=black]
\tikzstyle{squarevx}=[thick,rectangle,inner sep=0.cm, minimum size=3mm, fill=white, draw=black]
\tikzstyle{hedge}=[line width=2pt]
\tikzstyle{markline}=[draw=hellgrau,line width=7pt]

\def\wallheight{3}
\def\wallwidth{2}
\def\gridwidth{2*\wallwidth+1}

\pgfmathtruncatemacro{\penultimaterow}{\wallheight-1}
\pgfmathtruncatemacro{\lastrowshift}{mod(\wallheight+1,2)}

\draw[] (0,0) -- (2*\wallwidth,0);
\draw[] (\lastrowshift,\wallheight) -- (\lastrowshift+2*\wallwidth,\wallheight);
\foreach \i in {1,...,\penultimaterow} {
	\draw[] (0,\i) -- (\gridwidth,\i);
}

\foreach \i in {0,...,\wallwidth}{
	\foreach \j in {1,...,\wallheight} {
		\draw[] ({2*\i+Mod(\j+1,2)},\j-1) -- ({2*\i+Mod(\j+1,2)},\j);
	}
}

\draw[markline] (1,2) -- (2,2);
\draw[markline] (1,2) -- (1,1);
\draw[markline] (1,1) -- (3,1);

\node[smallblackvx] at (2,2){};
\node[smallblackvx] at (3,2){};
\node[smallblackvx] at (3,1){};

\draw[hedge] (1,1) -- (3,1);
\draw[hedge] (3,2) -- (3,1);
\draw[hedge] (3,2) -- (1,2);
\draw[hedge] (1,1) -- (1,2);

\draw[hedge] (4,0) -- (4,1);
\draw[hedge] (4,1) -- (3,1); 
\draw[hedge] (2,2) -- (2,3);
\node[squarevx] at (4,0){};
\node[squarevx] at (2,3){};

\end{tikzpicture}}
	\caption{The three black vertices are $w_1,w_2$, and $w_3$. Since $B_{2,1}$ is a $\Gamma$-nonzero cycle, at least one of these three cycle-chains is $\Gamma$-nonzero.}
	\label{Fig1chainwall}
\end{figure}

\begin{lemma}\label{longnzcyclechainlinkagelemma}
Let $\Gamma$ be an abelian group with no element of order two.
Let $(W,\gamma)$ be a strongly $\Gamma$-bipartite wall and let $\mcl$ be a pure $\Gamma$-odd linkage of $(W,\gamma)$ with $|\mcl|=3l$.
Then there is a $\Gamma$-nonzero cycle-chain of length $l$ contained in $P^{(h)}_1 \cup (\cup \mcl)$ whose core path is a subpath of $P^{(h)}_1$.
Moreover, if $\mcl$ is nested or crossing, then the core path intersects exactly one endpoint of each path in $\mcl$.
\end{lemma}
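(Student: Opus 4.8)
The plan is to first normalize the labelling. Since $(W,\gamma)$ is a $\Gamma$-bipartite wall, after a sequence of shifting operations I may assume every $b(W)$-path in $(W,\gamma)$ has weight $0$; then every path in $W$ between two branch vertices has weight $0$ (in particular every subpath of $P^{(h)}_1$ between two top nails), and every cycle contained in $W$ has weight $0$. Since shifting preserves the weights of cycles, $\mcl$ remains a $\Gamma$-odd linkage and a cycle-chain is $\Gamma$-nonzero after the shift if and only if before, so this reduction is free. Next I record that each $L\in\mcl$, being an $N$-path living in $G-(W-N)$, meets $P^{(h)}_1$ exactly in its two endpoints, which are top nails (hence branch vertices); so every cycle in $W\cup L$ is either contained in $W$, of weight $0$, or is $L$ together with a $W$-path between its endpoints, of weight $\gamma(L)$. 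As $\mcl$ is $\Gamma$-odd, $W\cup L$ has a nonzero cycle, forcing $\gamma(L)\neq 0$ for all $L\in\mcl$. Throughout I write $x_i<y_i$ for the endpoints of the $i$-th path of $\mcl$ in the linear order of the top nails along $P^{(h)}_1$, and $aP^{(h)}_1b$ for the subpath of $P^{(h)}_1$ joining $a,b$.

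If $\mcl$ is in series, the argument is immediate: reindex so the paths occur left to right, take the core path to be $x_1P^{(h)}_1y_l$, set $Q_i:=L_i$ with matching core subpath $x_iP^{(h)}_1y_i$; these core subpaths are pairwise disjoint since the paths are in series, each $Q_i$ is internally disjoint from $P^{(h)}_1$, and $\gamma(Q_i)=\gamma(L_i)\neq 0$ while each core subpath has weight $0$. (Only $l$ of the $3l$ paths are used, and no extra conclusion is claimed here.)

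The nested and crossing cases are the heart of the matter, and I will treat them together. The structural fact I would use is that in both cases, once the paths are indexed so $x_1<x_2<\dots<x_{3l}$ along $P^{(h)}_1$, all left endpoints $x_i$ precede all right endpoints $y_j$, and the right endpoints of each consecutive triple $\{y_{3i-2},y_{3i-1},y_{3i}\}$ occupy a block $J_i$ of $P^{(h)}_1$, with $J_1,\dots,J_l$ pairwise disjoint and disjoint from $P_0:=x_1P^{(h)}_1x_{3l}$. I take $P_0$ as the core path (it contains every $x_i$ and no $y_j$) and process $\mcl$ in triples. For the triple $L_{3i-2},L_{3i-1},L_{3i}$, since the three weights are nonzero and $\Gamma$ has no element of order two, the equalities $\gamma(L_{3i-2})=-\gamma(L_{3i-1})$, $\gamma(L_{3i-1})=-\gamma(L_{3i})$, $\gamma(L_{3i-2})=-\gamma(L_{3i})$ cannot all hold (adding them gives $2\gamma(L_{3i})=0$), so there is a pair $a<b$ in $\{3i-2,3i-1,3i\}$ with $\gamma(L_a)+\gamma(L_b)\neq 0$. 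I set $Q_i:=L_a\cup(y_aP^{(h)}_1y_b)\cup L_b$, a path from $x_a$ to $x_b$ (genuinely a path because $y_aP^{(h)}_1y_b\subseteq J_i$ meets $L_a$ only at $y_a$ and $L_b$ only at $y_b$), with matching core subpath $x_aP^{(h)}_1x_b\subseteq x_{3i-2}P^{(h)}_1x_{3i}$. Since $y_aP^{(h)}_1y_b$ and $x_aP^{(h)}_1x_b$ join branch vertices, $\gamma(Q_i)=\gamma(L_a)+\gamma(L_b)\neq 0$ while the core subpath has weight $0$. Then I verify $(P_0,Q_1,\dots,Q_l)$ is a valid cycle-chain: the core subpaths lie in the disjoint blocks $x_{3i-2}P^{(h)}_1x_{3i}$; each $Q_i$ is internally disjoint from $P_0$ because its interior lies outside $W$ (interiors of $L_a,L_b$) or inside $J_i$; and the $Q_i$ are pairwise disjoint because the $L$'s are distinct and the $J_i$ are disjoint. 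Since $P_0$ meets each $L_i$ exactly in $x_i$, the "moreover" clause follows.

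The main obstacle is exactly this nested/crossing case: one cannot take the $Q_i$ to be the linkage paths themselves, since their traces on $P^{(h)}_1$ overlap, so no single core subpath of $P^{(h)}_1$ has disjoint matching subpaths. The resolution — rerouting each $Q_i$ through a pair of linkage paths spliced by a weight-zero segment of $P^{(h)}_1$ in the block of right endpoints, and using the absence of order-two elements to guarantee that some pair in each triple has nonzero combined weight — is the one nonroutine step; the normalization, the series case, and the disjointness checks are all straightforward.
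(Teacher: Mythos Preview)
The proposal is correct and follows essentially the same approach as the paper: normalize so that $b(W)$-paths in $W$ have weight $0$ (hence $\gamma(L)\neq 0$ for each $L\in\mcl$), handle the series case trivially, and in the nested/crossing case group the linkage into consecutive triples, use the absence of order-two elements to find in each triple a pair $L_a,L_b$ with $\gamma(L_a)+\gamma(L_b)\neq 0$, and take $Q_i=L_a\cup(y_aP^{(h)}_1y_b)\cup L_b$ with matching core subpath $x_aP^{(h)}_1x_b$. Your write-up is in fact slightly more careful than the paper's in two places: you explicitly take the core path to be all of $x_1P^{(h)}_1x_{3l}$, which makes the ``moreover'' clause immediate, and you spell out the disjointness of the right-endpoint blocks $J_i$ and of the $Q_i$.
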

\begin{proof}
Since $\Gamma$ has no element of order two and $(W,\gamma)$ is strongly $\Gamma$-bipartite, every $b(W)$-path in $(W,\gamma)$ is $\Gamma$-zero and every path in $\mcl$ is $\Gamma$-nonzero.

If $\mcl$ is in series, then the conclusion is trivial as $P^{(h)}_1\cup (\cup\mcl)$ itself is a $\Gamma$-nonzero cycle-chain with core path $P^{(h)}_1$.
So we assume that $\mcl$ is nested or crossing.
Let $L_1,\dots,L_{3l}$ denote the paths of $\mcl$ and let $x_i$ and $y_i$ denote the left and right endpoint of $L_i$, $i \in [3l]$.
Then there exist disjoint subpaths $R_x$ and $R_y$ of $P^{(h)}_1$ such that $\{x_1,\dots,x_{3l}\} \subseteq V(R_x)$ and $\{y_1,\dots,y_{3l}\} \subseteq V(R_y)$.
We may assume that $x_i$ is positioned to the left of $x_j$ for $i<j$.

First consider the case $l=1$.
We claim that there exist $i,j \in [3]$, $i<j$, such that $\gamma(L_i)\neq -\gamma(L_j)$.
Indeed, otherwise we have $\gamma(L_1)=-\gamma(L_2)=\gamma(L_3)=-\gamma(L_1)$, which gives $2\gamma(L_1)=0$, a contradiction as $\Gamma$ has no element of order two.
Now choose such $1\leq i<j \leq 3$ with $\gamma(L_i)\neq -\gamma(L_j)$.
Then $(x_iR_xx_j,x_iL_iy_iR_yy_jL_jx_j)$ is a $\Gamma$-nonzero cycle-chain of length 1 whose core path $x_iR_xx_j$ is a subpath of $P^{(h)}_1$ intersecting exactly one endpoint of each path in $\mcl$.

Now for $l>1$, we apply the $l=1$ case above to obtain a $\Gamma$-nonzero cycle-chain of length 1 contained in 
$$x_{3i-2}R_xx_{3i} \cup L_{3i-2}\cup L_{3i-1} \cup L_{3i} \cup y_{3i-2}R_yy_{3i}$$
for each $i \in [l]$.
Connecting consecutive cycle-chains along $R_x$, we obtain the desired $\Gamma$-nonzero cycle-chain of length $l$.
\end{proof}

\begin{cor}\label{closedcclinkagecor}
Let $\Gamma$ be an abelian group with no element of order two.
Let $(W,\gamma)$ be a strongly $\Gamma$-bipartite wall with a pure $\Gamma$-odd linkage $\mcl$ with $|\mcl|=3kl$.
Then $(W\cup (\cup\mcl),\gamma)$ contains $k$ disjoint $\Gamma$-nonzero closed cycle-chains each of length $l$.
\end{cor}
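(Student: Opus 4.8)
The plan is to mirror the proofs of Corollaries~\ref{closedcccliquecor} and~\ref{closedccwallcor}: split $\mcl$ into $k$ sublinkages, apply Lemma~\ref{longnzcyclechainlinkagelemma} to each, and then close up the resulting cycle-chains inside the wall. First I would list the paths of $\mcl$ as $L_1,\dots,L_{3kl}$ so that their left endpoints occur in this left-to-right order along $P^{(h)}_1$, and set $\mcl_j=\{L_{3l(j-1)+1},\dots,L_{3lj}\}$ for $j\in[k]$. A sublinkage of a pure linkage is pure of the same type, so each $\mcl_j$ is a pure $\Gamma$-odd linkage of $(W,\gamma)$ of size $3l$, and Lemma~\ref{longnzcyclechainlinkagelemma} applied to $(W,\gamma)$ and $\mcl_j$ yields a nonzero cycle-chain $\mcc_j$ of length $l$ whose core path $P_j$ is a subpath of $P^{(h)}_1$ (in the nested and crossing cases meeting exactly one endpoint of each path of $\mcl_j$). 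Since the blocks $\mcl_j$ are contiguous in the chosen ordering, the cores $P_1,\dots,P_k$ are pairwise disjoint subpaths of $P^{(h)}_1$ occurring in order; and by the explicit form of $\mcc_j$ given by the lemma, the remainder of $\mcc_j$ lies on the linkage paths of $\mcl_j$ (pairwise disjoint across $j$ and internally disjoint from $W$) together with, in the nested and crossing cases, pairwise disjoint sub-arcs of $P^{(h)}_1$ carrying the $y$-endpoints of $\mcl_j$. Hence $\mcc_1,\dots,\mcc_k$ are already pairwise disjoint.

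Next I would close up each $\mcc_j$ without disturbing its ribs. Let $a_j,b_j$ be the ends of $P_j$; each is a top nail of $W$ and so is adjacent on $P^{(h)}_1$ to a branch vertex of $W$ lying outside $P_j$ — to the left of $a_j$ and to the right of $b_j$ — from which a vertical path of $W$ descends to $P^{(h)}_2$. Let $D_j$ be the $a_j$--$b_j$-path in $W$ obtained from these two first edges, the two vertical-path segments down to $P^{(h)}_2$, and the sub-arc of $P^{(h)}_2$ between their feet. Then $D_j$ lies entirely in $W$ and is internally disjoint from $P^{(h)}_1$, hence disjoint from every linkage path, so $C_j:=P_j\cup D_j$ is a cycle and the tuple with cycle $C_j$ and the same ribs as $\mcc_j$ — whose matching arcs all still sit on $P_j\subseteq C_j$ — is a nonzero \emph{closed} cycle-chain $\mcc'_j$ of length $l$. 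Provided the detours $D_1,\dots,D_k$ are taken pairwise disjoint (and disjoint from the other $\mcc'_{j'}$), this produces $k$ pairwise disjoint nonzero closed cycle-chains of length $l$ in $(W\cup(\cup\mcl),\gamma)$, as required.

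The step I expect to be the main obstacle is exactly arranging the closing detours $D_j$ to be pairwise disjoint: two consecutive cores $P_j,P_{j+1}$ can be nearly adjacent along $P^{(h)}_1$, in which case the branch vertex used to descend from the right end of $P_j$ may be forced to coincide with the one used to descend from the left end of $P_{j+1}$ (and likewise the top portion of the vertical path through it). The clean way to avoid this is to leave a little room on $P^{(h)}_1$ between consecutive blocks — either by choosing the blocks $\mcl_j$ with a spare linkage path between them, or equivalently by shrinking the cores slightly so as not to reach their extreme endpoints and rerouting the detours through disjoint vertical strips of $W$, using that $W$ is wide (it carries at least $6kl$ top nails). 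Once the detours are pinned down the remaining verification is routine bookkeeping, exactly as in Corollaries~\ref{closedcccliquecor} and~\ref{closedccwallcor}.
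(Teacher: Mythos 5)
Your argument is essentially the paper's: the paper applies Lemma~\ref{longnzcyclechainlinkagelemma} once to the whole linkage and then cuts the resulting length-$kl$ cycle-chain into $k$ pieces, whereas you cut the linkage into $k$ contiguous blocks first and apply the lemma to each, but the resulting objects and the closing step via $P^{(h)}_2$ and vertical paths are the same. The obstacle you flag --- consecutive cores competing for the single branch vertex between adjacent top nails --- is genuine, but it is present in exactly the same form in the paper's own one-sentence closing step (``reroute and extend the core paths''), so your proposal is no less complete than the published proof.
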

\begin{proof}
The $\Gamma$-nonzero cycle-chain of length $kl$ obtained via Lemma \ref{longnzcyclechainlinkagelemma} contains $k$ disjoint $\Gamma$-nonzero cycle-chains $\mcc_1,\dots,\mcc_k$ each of length $l$, each of whose core path is a subpath of $P^{(h)}_1$.
Using $P^{(h)}_2$ and suitable $P^{(h)}_1$-$P^{(h)}_2$-paths in $W$, we can reroute and extend the core paths of each $\mcc_i$ to obtain $k$ disjoint $\Gamma$-nonzero closed cycle-chains of length $l$.
\end{proof}

We are now ready to prove Theorem \ref{cyclesoddprimepowertheorem}.

\begin{theorem}[Theorem \ref{cyclesoddprimepowertheorem} restated]
Let $p$ be an odd prime and let $a$ be a nonnegative integer.
Then there exists a function $f_{p,a}:\mbn\to\mbn$ such that for every $\ell \in \mbz/p^a\mbz$ and every positive integer $k$, we have that every $\mbz/p^a\mbz$-labelled graph $(G,\gamma)$ contains either $k$ disjoint cycles of weight $\ell$ or a hitting set $Z$ for cycles of weight $\ell$ with $|Z|\le f_{p,a}(k)$.
\end{theorem}
\begin{proof}
If $a=0$, then $\mbz/p^a\mbz$ is the trivial group and $\ell=0$.
Here, the cycles of weight $\ell$ are just the cycles of the underlying graph, and the theorem follows from Erd\H{o}s and P\'osa's original result \cite{ErdPos}.

Let $a\geq 1$ be the smallest integer such that for some $\ell \in \Gamma:=\mbz/p^a\mbz$, the Erd\H{o}s-P\'osa property fails to hold for cycles of weight $\ell$. 
Fix such an element $\ell \in \Gamma$ and let $q = p^{2a-1}(p-1)$.

We define $f_{p,a}(k)$ as follows.
Let $r=3kq$ and $t=(5q+1)k$. 
Let $g$ and $h$ be the functions from Theorem \ref{flatwallundirectedtheorem}.
We may choose $f_{p,a}(k)$ large enough so that, if $((G,\gamma),k)$ is a minimal counterexample to $f_{p,a}$ being an Erd\H{o}s-P\'osa function for the family of cycles of weight $\ell$, then $(G,\gamma)$ contains a $g(r,t)$-wall $(W,\gamma)$ such that no cycle of weight $\ell$ is contained in the small side of a separation in $\mct_W$ (by applying Lemma \ref{tangleminctex} and Theorem \ref{tanglewallthm}).
Subject to this, we also choose $f_{p,a}(k)$ so that $f_{p,a}(k) > f_{p,a-1}(k) + h(r,t)$ for all $k$.

Let $((G,\gamma),k)$ be a minimal counterexample.
Given an integer $n$, we write $\bar n$ to denote its congruence class modulo $p^a$.
Let $\Gamma'$ be the quotient group $\Gamma/\langle \bar p\rangle$, where $\langle \bar p \rangle$ is the subgroup generated by $\bar p$.
Let $\gamma'$ be the $\Gamma'$-labelling of $G$ defined by $\gamma'(e) = \langle \bar p \rangle + \gamma(e)$ for all $e \in E(G)$ (where $\langle \bar p \rangle + \gamma(e)$ denotes the coset $\{\rho + \gamma(e): \rho\in\langle \bar p\rangle\}$).

We apply Theorem \ref{flatwallundirectedtheorem} to the $\Gamma'$-labelled graph $(G,\gamma')$.
In outcomes (1) and (2), we obtain a $\Gamma'$-odd $K_{(5q+1)k}$-model, a facially $\Gamma'$-odd $3q\times 4k$-wall, or a strongly $\Gamma'$-bipartite wall with a pure $\Gamma'$-odd linkage of size $3kq$.
In these cases we apply Corollary \ref{closedcccliquecor}, \ref{closedccwallcor}, and \ref{closedcclinkagecor} respectively to obtain $k$ disjoint $\Gamma'$-nonzero closed cycle-chains each of length $q=p^{2a-1}(p-1)$.

Let $\mcc = (C,Q_1,\dots,Q_q)$ be one such closed cycle-chain and for each $i\in[q]$, let $P_i$ denote the subpath of $C$ with the same endpoints as $Q_i$ such that $P_1,\dots,P_q$ are pairwise disjoint.
Since $\mcc$ is $\Gamma'$-nonzero, we have $\gamma'(Q_i) \neq \gamma'(P_i)$, hence $\gamma(Q_i)-\gamma(P_i) \not\in \langle \bar p\rangle$.
Note that there are exactly $p^{a-1}(p-1)$ elements in $\Gamma \setminus \langle \bar p \rangle$.
Since $\mcc$ has length $p^{2a-1}(p-1)$, it follows that there is a subset $I \subseteq [p^{2a-1}(p-1)]$ with $|I|=p^a$ such that, for some $\alpha \in \Gamma\setminus \langle \bar p \rangle$, we have $\gamma(Q_i)-\gamma(P_i) = \alpha$ for all $i \in I$.
Since $\alpha \not\in \langle \bar p\rangle$, $\alpha$ generates $\Gamma$ (additively), whence the closed cycle-chain $(C, Q_i:i\in I)$ of length $p^a$ contains a cycle of weight $\ell$.
Since there are $k$ disjoint such cycle chains, we thus obtain $k$ disjoint cycles of weight $\ell$ in outcomes (1) and (2) of Theorem \ref{flatwallundirectedtheorem}, a contradiction.

We may therefore assume that outcome (3) holds; that is, there exists $Z \subseteq V(G)$ with $|Z| \leq h(r,t)$ such that the $\mct_W$-large 3-block $(B',\gamma_B')$ of $(G-Z,\gamma')$ is $\Gamma'$-bipartite.
Since $B'$ is 3-connected and $\Gamma'$ has no element of order two, we have $\gamma_B' = \bm 0$ by Lemma \ref{lemma3connshiftequivalent}.
This implies that the weight of every $V(B')$-path in $(G-Z,\gamma)$ is in $\langle \bar p \rangle$.

If $\ell \not\in \langle \bar p \rangle$, then $Z$ is a hitting set for cycles of weight $\ell$ since otherwise a cycle of weight $\ell$ in $(G-Z,\gamma)$ would be contained in a $V(B')$-bridge and hence in the small side of a separation in $\mct_W$.
Since $(G,\gamma)$ does not contain a hitting set of size $h(r,t) < f_{p,a}(k)$, we may assume that $\ell \in \langle \bar p\rangle$.

Let $(B,\gamma_B)$ be the $\mct_W$-large 3-block of $(G-Z,\gamma)$.
Note that $V(B) = V(B')$ and that $(B,\gamma_B)$ is a $\langle \bar p \rangle$-labelled graph.
Since $\langle \bar p \rangle \cong \mbz/p^{a-1}\mbz$, by our choice of $a$, $(B,\gamma_B)$ contains either $k$ disjoint cycles of weight $\ell$ or a hitting set $Y$ with $|Y| \leq f_{p,a-1}(k)$. 
The first case is a contradiction and in the second case $Y \cup Z$ is a hitting set for cycles of weight $\ell$ in $(G,\gamma)$ with $|Y\cup Z|\leq f_{p,a-1}(k)+h(r,t) \le f_{p,a}(k)$, again a contradiction.
\end{proof}

\section{Proof of Theorem \ref{flatwallundirectedtheorem}: large $K_t$-model}
\label{sec:clique}

The remainder of the paper is dedicated to proving Theorem \ref{flatwallundirectedtheorem}.
In this section we deal with the first outcome of the flat wall theorem where $G$ contains a large $K_t$-model.
As discussed in section \ref{sec:maintheorem}, we need some additional definitions and lemmas related to the trees of a $K_t$-model.

Let $T$ be a tree and let $U \subseteq V(T)$.
Then the smallest subtree of $T$ containing all vertices in $U$ is the \emph{$U$-subtree of $T$}.
For example, if $|U|=2$, say $U=\{u,v\}$, then $U$-subtree of $T$ is the path $uTv$.
Suppose $T$ is a tree in a graph $G$ and $F \subseteq E(G)$ is such that each edge in $F$ has exactly one endpoint in $T$ and no two edges of $F$ share an endpoint outside of $T$ (so that $T\cup F$ is a tree).
Then the \emph{$F$-extension of $T$} is the $V(F)$-subtree of $T\cup F$, where $V(F)$ is the set of endpoints of the edges in $F$.
For $n \in \mbn$, an \emph{$n$-star} is a graph isomorphic to a subdivision of $K_{1,n}$. 
If $n \geq 3$, the \emph{center} of an $n$-star is the unique vertex of degree greater than 2 and a \emph{leg}  is a path from its center to a leaf.

Let $\mu$ be a $K_m$-model in a $\Gamma$-labelled graph $(G,\gamma)$ where $V(K_m)=\{v_1,\dots,v_m\}$. 
For distinct $i,j \in [m]$, let us denote the two endpoints of the edge $\mu(v_iv_j)$ in the trees $\mu(v_i)$ and $\mu(v_j)$ by $\mu(v_iv_j)_i$ and $\mu(v_iv_j)_j$ respectively.
Fix $i\in [m]$ and let $d \in \mbn$.
We say that a vertex $s \in V(\mu(v_i))$ is \emph{$d$-central} if no connected component of $\mu(v_i)-s$ contains $\mu(v_iv_j)_i$ for at least $m-1-d$ distinct indices $j \in [m]-\{i\}$.
In other words, if $e \in E(\mu(v_i))$ is incident to $s$, then the connected component of $\mu(v_i)-e$ containing $s$ contains $\mu(v_iv_j)_i$ for more than $d$ indices $j \in [m]-\{i\}$.

It is easy to see that a $d$-central vertex always exists if $d < \frac{m-1}{2}$: 
start from an arbitrary vertex $s$ in $\mu(v_i)$ and, as long as the current vertex is not $d$-central, move towards the (unique) component $T$ of $\mu(v_i)-s$ for which there are at least $m-1-d > \frac{m-1}{2}$ indices $j$ such that $\mu(v_iv_j)_i \in V(T)$. 
Since $d < \frac{m-1}{2}$, this process cannot backtrack and must end eventually.

Let $\mcc_i^d$ denote the set of $d$-central vertices in $\mu(v_i)$.
The vertices of $\mcc_i^d$ form a subtree in $\mu(v_i)$: if $s',s''$ are distinct vertices in $\mcc_i^d$ and $s$ is in the interior of the path $s'\mu(v_i)s''$, then each connected component $T$ of $\mu(v_i)-s$ is contained in a connected component of either $\mu(v_i)-s'$ or $\mu(v_i)-s''$, so there are less than $m-1-d$ indices $j$ such that $\mu(v_iv_j)_i \in V(T)$.  

Let $u \in V(\mu(v_i))$ and let $j_1,j_2,j_3 \in [m]-\{i\}$ be distinct.
If the $\{\mu(v_iv_{j_1}),\mu(v_iv_{j_2}),\mu(v_iv_{j_3})\}$-extension of $\mu(v_i)$ is a 3-star centered at $u$, then we say that $u$ \emph{branches to} $\{\mu(v_{j_1}),\mu(v_{j_2}),\mu(v_{j_3})\}$, or simply to $\{v_{j_1},v_{j_2},v_{j_3}\}$ if there is no ambiguity of the model.
If $Y \subseteq [m]-\{i\}$, we say that $u$ \emph{branches avoiding $Y$} if there exist distinct $j_1,j_2,j_3 \in [m]-\{i\}-Y$ such that $u$ branches to $\{v_{j_1},v_{j_2},v_{j_3}\}$.
We say that a vertex $u \in V(\mu(v_i))$ is \emph{$d$-branching} if $u$ branches avoiding $Y$ for all $Y \subseteq [m]-\{i\}$ with $|Y| \leq d$.
If $u$ is 0-branching, we simply say that $u$ is \emph{branching}.
The set of branching vertices of a $K_m$-model $\mu$ is denoted $b(\mu)$.
The following proposition is immediate.
\begin{prop} \label{prop:modelbranch3conn}
	Let $\mu$ be a $K_m$-model with $m\geq 4$.
	Then the union of all $b(\mu)$-paths in $\mu$ is a subdivision of a 3-connected graph $H$ where $V(H)=b(\mu)$ and the edges of $H$ correspond to the $b(\mu)$-paths in $\mu$. 
\end{prop}
If $m\geq 4$, then clearly each tree $\mu(v_i)$ contains a branching vertex.
If $d\geq 1$ however, then a $d$-branching vertex need not always exist. 
For example, $\mu(v_i)$ could be a path with each vertex incident to only a few edges of the form $\mu(v_iv_j)$. 
The following lemma shows that this is essentially the only obstruction:
\begin{lemma}
\label{lbranchinglemma}
Let $\mu$ be a $K_m$-model with $m\geq 4$.
Let $0 < d < \frac{m-1}{2}$ be an integer and suppose $\mu(v_i)$ does not contain a $d$-branching vertex. Then 
\begin{enumerate}
	\item
	the $\mcc_i^{d}$-subtree of $\mu(v_i)$ is a path $R$, and
	\item
	for each $s \in \mcc_i^d$, there are at most $3d$ indices $j \in [m]-\{i\}$ such that the (possibly trivial) $\mu(v_iv_j)_i$-$\mcc_i^d$-path in $\mu(v_i)$ ends at $s$. 
	In particular, at least  $m-1-6d$ of these paths end at an internal vertex of $R$.
\end{enumerate}
\end{lemma}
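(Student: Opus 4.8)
The plan is to analyse a fixed tree $T := \mu(v_i)$ by looking, for a vertex $u \in V(T)$, at the components of $T-u$. For a component $D$ of $T-u$ let $J_D$ be the set of indices $j \in [m]-i$ with $\mu(v_iv_j)_i \in V(D)$, and let $J_u$ be the set of $j$ with $\mu(v_iv_j)_i = u$. The basic tool is a branching criterion: for distinct $j_1,j_2,j_3 \in [m]-i$, the vertex $u$ branches to $\{v_{j_1},v_{j_2},v_{j_3}\}$ if and only if no component of $T-u$ contains two of $\mu(v_iv_{j_1})_i,\mu(v_iv_{j_2})_i,\mu(v_iv_{j_3})_i$. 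This follows since the tree induced by those three edges on $\mu(v_i)$ is always a subdivision of $K_{1,3}$ whose centre is the median of $\mu(v_iv_{j_1})_i,\mu(v_iv_{j_2})_i,\mu(v_iv_{j_3})_i$ in $T$, and the median equals $u$ exactly when no component of $T-u$ contains two of these vertices. Two consequences: (i) if at least three components of $T-u$ each contain $\mu(v_iv_j)_i$ for more than $d$ values of $j$, then $u$ is $d$-branching, since for any $Y \subseteq [m]-i$ with $|Y| \le d$ one may pick an index outside $Y$ from each of three such components and apply the criterion; (ii) if $u$ is \emph{not} $d$-branching and $Y_u$ (with $|Y_u| \le d$) witnesses this, then the number of components $D$ of $T-u$ with $J_D \not\subseteq Y_u$, plus $|J_u \setminus Y_u|$, is at most $2$ --- otherwise we could again extract from the criterion a triple showing $u$ branches avoiding $Y_u$.

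For part~(1), first observe that if a component $D$ of $T-u$ contains a $d$-central vertex $w$, then $|J_D| \ge d+1$: the component of $T-w$ containing $u$ contains every index not in $J_D$, i.e.\ at least $(m-1)-|J_D|$ of them, yet by $d$-centrality of $w$ it contains fewer than $m-1-d$ of them, so $|J_D| > d$. Now suppose for contradiction that the subtree $\mcc_i^d$ has a vertex $u$ of degree at least $3$. The three edges of $\mcc_i^d$ at $u$ lie in three distinct components of $T-u$, each containing a $d$-central vertex and hence more than $d$ indices; by consequence~(i), $u$ is $d$-branching, contradicting the hypothesis. Therefore $\mcc_i^d$ has maximum degree at most $2$, and being a subtree it is a path $R$, which is part~(1).

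The technical heart is the following claim: if $s$ is $d$-central and $D$ is a component of $T-s$ containing no vertex of $\mcc_i^d$ (a \emph{hanging} component at $s$), then $|J_D| \le d$. To prove it, regard $D$ as a tree rooted at its vertex $w_0$ adjacent to $s$, and for $v \in V(D)$ let $n(v)$ be the number of $j$ with $\mu(v_iv_j)_i$ in the subtree of $D$ rooted at $v$; then $n(w_0) = |J_D|$, and the components of $T-v$ are the subtrees of $D$ rooted at the children of $v$ together with one further component which contains $s$ and has exactly $(m-1)-n(v)$ indices. Suppose $|J_D| \ge d+1$. Any $v \in V(D)$ with $n(v) \ge d+1$ that is not $d$-central must then have a child $c$ with $n(c) \ge m-1-d$, since its $s$-side component has only $(m-1)-n(v) \le m-2-d < m-1-d$ indices. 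As $d < \frac{m-1}{2}$ gives $m-1-d \ge d+1$, and every vertex of $D$ fails to be $d$-central by hypothesis, starting from $w_0$ (which has $n(w_0) = |J_D| \ge d+1$) we obtain an infinite descending chain $w_0, c_1, c_2, \dots$ of vertices of $D$, contradicting finiteness of $D$. Hence $|J_D| \le d$. (This mirrors the argument, recalled just above in the excerpt, that $d$-central vertices exist.)

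For part~(2), fix $s \in \mcc_i^d$, which by part~(1) is a vertex of the path $R = \mcc_i^d$, and let $M$ be the set of indices $j$ whose $\mu(v_iv_j)_i$-$\mcc_i^d$-path ends at $s$; this happens exactly when $\mu(v_iv_j)_i = s$ or $\mu(v_iv_j)_i$ lies in a hanging component at $s$, and we must show $|M| \le 3d$. Exactly $\deg_R(s) \le 2$ components of $T-s$ meet $R$, each containing a $d$-central vertex and hence more than $d$ indices, so none of them is contained in the set $Y_s$ from consequence~(ii) applied to $s$ (which is not $d$-branching). Writing $a := |J_s \setminus Y_s|$ and letting $b$ be the number of hanging components $D$ at $s$ with $J_D \not\subseteq Y_s$, consequence~(ii) gives $a + b \le 2 - \deg_R(s)$. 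Every index of $M$ either lies in $Y_s$ (at most $d$ of these), or is an index of $J_s$ outside $Y_s$, or lies in a hanging component $D$ with $J_D \not\subseteq Y_s$; by the claim each such $D$ has $|J_D| \le d$, so the indices of $M$ outside $Y_s$ number at most $a + bd \le (2-\deg_R(s))d$ (using $d \ge 1$). Thus $|M| \le d + (2-\deg_R(s))d \le 3d$, giving the first assertion; for the ``in particular'', $R$ has two endpoints, so at most $6d$ indices have their $\mcc_i^d$-path ending at an endpoint of $R$, leaving at least $m-1-6d$ ending at an internal vertex. I expect the claim bounding hanging components to be the main obstacle --- both isolating the correct statement (each hanging component individually carries few indices, which is what lets consequence~(ii) be combined with it) and carrying out the ``descend forever'' argument, whose inductive step is forced precisely because $d < \frac{m-1}{2}$.
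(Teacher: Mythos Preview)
Your proof is correct and follows essentially the same strategy as the paper: establish that $\mcc_i^d$ is a path, show that each hanging component at a vertex $s\in\mcc_i^d$ contains at most $d$ indices, and then count. The two executions differ in detail, and one simplification from the paper is worth knowing.

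For the claim that a hanging component $D$ at $s$ has $|J_D|\le d$, your descent argument is correct but more elaborate than needed. The paper dispatches it in one step: if $|J_D|>d$, then the neighbour $w_0$ of $s$ in $D$ is already $d$-central. Indeed, the component of $\mu(v_i)-w_0$ containing $s$ has $(m-1)-|J_D|<m-1-d$ indices, while every other component of $\mu(v_i)-w_0$ is a subtree of $D$ and hence has at most $|J_D|$ indices; but $|J_D|<m-1-d$ because $s$ is $d$-central and $D$ is a component of $\mu(v_i)-s$. This contradicts $D$ containing no $d$-central vertex. Your descent works without invoking the $d$-centrality of $s$ to bound $|J_D|$ from above, which is why it is longer.

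For the bound $|M|\le 3d$, the paper argues contrapositively for arbitrary $Y$: it assumes $|M|>3d$ and shows directly that $s$ branches avoiding every $Y$ of size at most $d$. Your argument instead fixes a single witness $Y_s$ to the failure of $d$-branching and counts via your consequence~(ii); this is a clean alternative and in fact gives the marginally sharper inequality $|M|\le(3-\deg_R(s))d$.
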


\begin{proof}
Suppose that the $\mcc_i^{d}$-subtree of $\mu(v_i)$ is not a path. 
Then there exists a vertex $u \in \mcc_i^d$ that is adjacent in $\mu(v_i)$ to at least three vertices in $\mcc_i^d$, say $s_1,s_2$, and $s_3$. 
For each $k \in [3]$, since $s_k$ is $d$-central, there are more than $d$ indices $j \in [m]-\{i\}$ such that $\mu(v_iv_j)_i \in V(T_k)$, where $T_k$ is the connected component of $\mu(v_i)-u$ containing $s_k$.
Thus, for all $Y \subseteq [m]-\{i\}$ with $|Y| \leq d$, there is an index $j_k \not\in Y$ such that $\mu(v_iv_{j_k})_i \in V(T_k)$.
This implies that $u$ is $d$-branching and proves the first statement of the lemma.

Now let $s \in \mcc_i^d$. 
For each connected component $T$ of $\mu(v_i)-s$ not containing a vertex in $\mcc_i^d$, there are at most $d$ indices $j \in [m]-\{i\}$ such that $\mu(v_iv_j)_i \in V(T)$, since otherwise the neighbour of $s$ in $T$ would be $d$-central.
Let $J$ denote the set of indices $j \in [m]-\{i\}$ such that the $\mu(v_iv_j)_i$-$\mcc_i^d$-path in $\mu(v_i)$ ends at $s$.
The second statement of the lemma asserts that $|J|\leq 3d$.

Suppose $|J| \geq 3d+1$.
We show that $s$ must then be $d$-branching.
Let $Y \subseteq [m]-\{i\}$ with $|Y| \leq d$ and let $c$ denote the number of indices $j \in J-Y$ such that $s = \mu(v_iv_j)_i$. 
Note that $|J-Y|\geq 2d+1$.

If $c\geq 3$, then clearly $s$ branches avoiding $Y$, so we may assume $c \in \{0,1,2\}$.
Then there are at least $2d-c+1$ indices $j \in J-Y$ such that $\mu(v_iv_j)_i$ is contained in a connected component of $\mu(v_i)-s$ not containing a vertex in $\mcc_i^d$. 
But each such component contains $\mu(v_iv_j)_i$ for at most $d$ indices $j \in J-Y$, so there are at least $3-c$ distinct connected components of $\mu(v_i)-s$ containing $\mu(v_iv_j)_i$ for some $j \in J-Y$. 
These $3-c$ components together with the $c$ edges on $s$ imply that $s$ branches avoiding $Y$.
Hence $s$ is $d$-branching and this proves the second statement of the lemma.
\end{proof}


We now show, as discussed in Remark \ref{rem:Ktbip}, that if $\mu$ is a $\Gamma$-bipartite $K_m$-model, $m\geq 4$, then $(\mu,\gamma)$ is a $\Gamma$-bipartite $\Gamma$-labelled graph.

\begin{lemma}
\label{bipmodelpathlemma}
Let $m\geq 4$ and let $\mu$ be a $\Gamma$-bipartite $K_m$-model in a $\Gamma$-labelled graph $(G,\gamma)$. Then $(\mu[V(K_m)],\gamma)$ is a $\Gamma$-bipartite $\Gamma$-labelled graph.
\end{lemma}
\begin{proof}
Write $V(K_m)=\{v_1,\dots,v_m\}$. 
Let us first prove the following claim.
\begin{claim}
If $P$ is a $b(\mu)$-path in $\mu$, then $2\gamma(P)=0$.
\end{claim}
\begin{subproof}
If $m=4$, then $\mu[V(K_m)]$ is $\Gamma$-bipartite by definition and the claim follows by Proposition \ref{prop:modelbranch3conn} and Lemma \ref{lemma3connshiftequivalent}.

So suppose $m\geq 5$ and let $P$ be a $b(\mu)$-path in $\mu$ with endpoints $u$ and $w$.
First suppose that $u$ and $w$ are in different trees of $\mu$, say in $\mu(v_1)$ and $\mu(v_2)$ respectively.
Since $u$ is branching, there are two indices in $[m]-\{1,2\}$, say 3 and 4, such that $u$ branches to $\{v_2,v_3,v_4\}$.
Thus $u$ is also a branching vertex in the $K_4$-submodel $\eta$ of $\mu$ restricted to $\{v_1,v_2,v_3,v_4\}$.
Since the claim holds for $m=4$, if $w$ is also branching in $\eta$, then $2\gamma(P)=0$ as desired.
Otherwise, since $w$ is branching in $\mu$ but not in $\eta$, there exists another index in $[m]-\{1,2,3,4\}$, say 5, such that $w$ branches to both $\{v_1,v_3,v_5\}$ and $\{v_1,v_4,v_5\}$.
Furthermore, $u$ must branch to at least one of $\{v_2,v_3,v_5\}$ or $\{v_2,v_4,v_5\}$.
Without loss of generality, assume that $u$ branches to $\{v_2,v_3,v_5\}$.
Then $u$ and $w$ are both branching in the $K_4$-submodel of $\mu$ restricted to $\{v_1,v_2,v_3,v_5\}$, so $2\gamma(P)=0$.
See Figure \ref{fig:BipK5a}.

So we may assume that $u$ and $w$ are in the same tree, say $\mu(v_1)$.
Since $u$ and $w$ are both branching in $\mu$, there are four indices in $[m]-\{1\}$, say 2, 3, 4, and 5, such that $u$ branches to both $\{v_2,v_3,v_4\}$ and $\{v_2,v_3,v_5\}$ and $w$ branches to both $\{v_2,v_4,v_5\}$ and $\{v_3,v_4,v_5\}$.

For the rest of the proof of this claim, we only consider the $K_5$-submodel of $\mu$ restricted to $\{v_1,\dots,v_5\}$.
For notational convenience we simply assume that $\mu$ is a $K_5$-model.

Let $\eta$ be the $K_4$-submodel of $\mu$ restricted to $\{v_2,v_3,v_4,v_5\}$ and, for $i \in \{2,3,4,5\}$, let $x_i$ be the unique vertex in $\mu(v_i)$ that is branching in $\eta$. 
Let $Q_{ij}$ denote the unique $x_i$-$x_j$-path in $\mu[v_i,v_j]$ for $i,j \in \{2,3,4,5\}$.
Note that $2\gamma(Q_{ij})=0$ by the $m=4$ case of the claim.

Let $P_i$ denote the $u$-$x_i$-path in $\mu[v_1,v_i]$ for $i\in\{2,3\}$ and let $y_i$ be the closest vertex to $u$ on $P_i$ such that $y_i \in V(\mu(v_i))$ and $y_i$ is branching in $\mu$. 
Similarly, for $j \in \{4,5\}$, let $P_j$ denote the $w$-$x_j$-path in $\mu[v_1,v_j]$ and let $y_j$ be the closest vertex to $w$ on $P_j$ such that $y_j \in V(\mu(v_j))$ and $y_j$ is branching in $\mu$.
Note that $2\gamma(uP_2y_2)=2\gamma(uP_3y_3)=2\gamma(wP_4y_4)=2\gamma(wP_5y_5)=0$ since these are $b(\mu)$-paths with endpoints in different trees.

Suppose $y_2 \in V(Q_{24} \cup Q_{25})$.
Then $y_2$ branches to $\{v_1,v_4,v_5\}$, so it is a branching vertex in the $K_4$-submodel $\pi$ of $\mu$ restricted to $\{v_1,v_2,v_4,v_5\}$.
Since $w$ is also branching in $\pi$, we have $2\gamma(y_2P_2uPw)=0$ by the $m=4$ case.
But as previously noted, we also have $2\gamma(uP_2y_2)=0$, which implies $2\gamma(P)=0$.
Therefore we may assume that $y_2 \in V(Q_{23})-x_2$ and, by symmetry, $y_3 \in V(Q_{23})-x_3$.
Similarly, we may assume that $y_j \in V(Q_{45})-x_j$ for $j \in \{4,5\}$.
See Figure \ref{fig:BipK5b}.

Since the two cycles $P \cup P_4 \cup Q_{24} \cup P_2$ and $P\cup P_4 \cup Q_{34} \cup P_3$ are in $\mu[v_1,v_2,v_3,v_4]$, they are both $\Gamma$-zero, and so $\gamma(Q_{24})+\gamma(P_2) = \gamma(Q_{34})+\gamma(P_3)$.
But $P_2 \cup Q_{24} \cup Q_{34} \cup P_3$ is also a cycle in $\mu[v_1,v_2,v_3,v_4]$, so $\gamma(P_2)+\gamma(Q_{24})+\gamma(Q_{34})+\gamma(P_3)=0$, which gives $0 = 2\gamma(P_2)+2\gamma(Q_{24}) = 2\gamma(P_2)$ since $2\gamma(Q_{24})=0$.
By symmetry, we have $2\gamma(P_2)=2\gamma(P_4)=0$.
Now since $P \cup P_4 \cup Q_{24} \cup P_2$ is a $\Gamma$-zero cycle, it follows that $2\gamma(P)=2\gamma(P)+2\gamma(P_2)+2\gamma(Q_{24})+2\gamma(P_4) = 0$.
This completes the proof of the claim.
\end{subproof}

\begin{figure}[t]
	\centering
	\begin{subfigure}{0.43\textwidth} 
		\centering
		\begin{center}
	\resizebox{0.95\textwidth}{!}{
		\begin{tikzpicture}[scale=0.85]
			\tikzstyle{smallvx}=[thick,circle,inner sep=0.cm, minimum size=2mm, fill=black, draw=black]
			\tikzstyle{tinyvx}=[thick,circle,inner sep=0.cm, minimum size=0.1mm, fill=black, draw=black]
			\tikzstyle{hedge}=[line width=1pt]
			\tikzstyle{jedge}=[line width=1.5pt]
			
			\node[ellipse, draw=black, text height=.75cm, text width=1.5cm] at (-1.7,6) [label=above:$\mu(v_1)$] {};
			\node[circle, draw=black, text width=1.5cm] at (1.4,5.8) [label=above:$\mu(v_2)$] {};
			
			\node[smallvx] (u) at (-1,6) [label=above:$u$] {};
			\node[smallvx] (w) at (1,6) [label=above:$w$] {};
			\node[circle, draw=black] (x3) at (0,4) {$\mu(v_3)$};
			\node[circle, draw=black] (x4) at (0,2) {$\mu(v_4)$};
			\node[circle, draw=black] (x5) at (0,0) {$\mu(v_5)$};
			\node[tinyvx] (z) at (1.5,5.5) {};
			
			\draw[jedge] (u) edge node[above] {$P$} (w);
			\draw[hedge] (u) edge[out=160, in=180, looseness=1.6] node[left, pos=0.3] {} (x4);
			\draw[hedge] (u) edge[out=220, in=180, looseness=1.5] node[left] {} (x3);
			\draw[hedge] (z) edge[out=-40, in=0, looseness=1.2] (x4);
			\draw[hedge] (z) edge[out=-80,in=0, looseness=1] (x3);
			\draw[hedge] (z) edge (w);
			\draw[hedge] (w) edge[out=20, in=0, looseness=1.3] (x5);
			\draw[hedge] (x5) edge[out=180, in=160, looseness=1.3] (u);
		\end{tikzpicture}
	}
\end{center}
		\caption{}
		\label{fig:BipK5a}
	\end{subfigure}
	\begin{subfigure}{0.51\textwidth} 
		\centering
		\begin{center}
	\resizebox{0.95\textwidth}{!}{
		\begin{tikzpicture}[scale=0.85]
			\tikzstyle{smallvx}=[thick,circle,inner sep=0.cm, minimum size=2mm, fill=black, draw=black]
			\tikzstyle{tinyvx}=[thick,circle,inner sep=0.cm, minimum size=1.5mm, fill=black, draw=black]
			\tikzstyle{hedge}=[line width=1pt]
			\tikzstyle{jedge}=[line width=1.5pt]
			
			\node[ellipse, draw=black, text width=2.2cm, text height=0.7cm] at (2,6) [label=above:$\mu(v_1)$] {};
			\draw (0.3,0.3) circle (1.2);
			\draw (0.3,3.7) circle (1.2);
			\draw (3.7,0.3) circle (1.2);
			\draw (3.7,3.7) circle (1.2);
			
			\node[smallvx] (u) at (1,6) [label=above:$u$] {};
			\node[smallvx] (w) at (3,6) [label=above:$w$] {};
			\node[smallvx] (x2) at (0,4) [label=above:$x_2$] {};
			\node[smallvx] (x3) at (0,0) [label=below:$x_3$] {};
			\node[smallvx] (x4) at (4,4) [label=above:$x_4$] {};
			\node[smallvx] (x5) at (4,0) [label=below:$x_5$] {};
			
			\node[tinyvx] (y2) at (0,3) [label=right:\small $y_2^{\ }$] {};
			\node[tinyvx] (y4) at (4,3) [label=left :\small $y_4^{\ }$] {};
			\node[tinyvx] (y3) at (0,1) [label=right:\small $y_3^{\ }$] {};
			\node[tinyvx] (y5) at (4,1) [label=left :\small $y_5^{\ }$] {};
			
			\draw[jedge] (u) edge node[above] {$P$} (w);
			\draw[jedge] (x2) edge node[left] {$Q_{23}$} (x3);
			\draw[jedge] (x2) edge node[above] {$Q_{24}$} (x4);
			\draw[jedge] (x2) edge (x5);
			\draw[jedge] (x3) edge (x4);
			\draw[jedge] (x3) edge (x5);
			\draw[jedge] (x4) edge node[right] {$Q_{45}$} (x5);
			
			\draw[hedge] (u) edge[bend right=60, looseness=1.2] node[left, pos=0.3] {$P_2$} (0,3);
			\draw[hedge] (u) edge[bend right=100, looseness=1.5] node[left, pos=0.35] {$P_3$} (0,1);
			\draw[hedge] (w) edge[bend left=60, looseness=1.2] node[right, pos=0.3] {$P_4$} (4,3);
			\draw[hedge] (w) edge[bend left=100, looseness=1.5] node[right, pos=0.35] {$P_5$} (4,1);
			\draw[dashed] (u) edge[bend right] (0.8,4);
		\end{tikzpicture}
	}
\end{center}
		\caption{}
		\label{fig:BipK5b}
	\end{subfigure}
	\caption{A $b(\mu)$-path $P$ in a $K_5$-model $\mu$. Each ellipse indicates a tree $\mu(v_i)$.}
	\label{FigBipK5}
\end{figure}
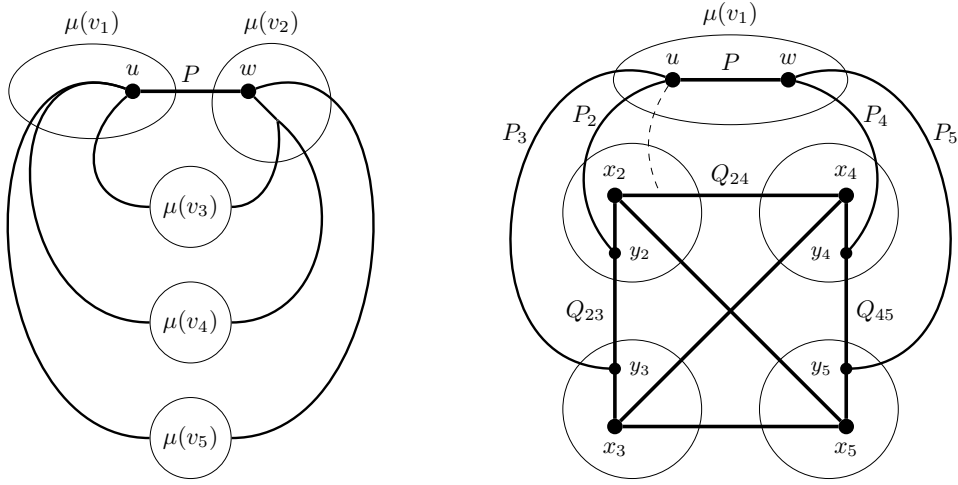

Now suppose $\mu[V(K_m)]$ contains a $\Gamma$-nonzero cycle and choose such a cycle $C$ minimizing the number $\ell$ of edges of the form $\mu(v_iv_j)$. 
Then $\ell \geq 5$ by the definition of a $\Gamma$-bipartite $K_m$-model.
Let $\mu(v_{i_1}),\mu(v_{i_2})$, and $\mu(v_{i_3})$ be three consecutive trees visited by $C$ in that order.
Then $\mu[v_{i_1},v_{i_3}]$ contains a $V(C)$-path $Q$ that is not an edge of $C$, and the two cycles $C_1,C_2$ in $C\cup Q$ distinct from $C$ both contain less than $\ell$ edges of the form $\mu(v_iv_j)$.
Hence $\gamma(C_1)=\gamma(C_2)=0$ by minimality of $\ell$.
Moreover, the two vertices in $V(C)\cap V(Q)$ are clearly branching in $\mu$, so $Q$ is a concatenation of $b(\mu)$-paths in $\mu$, and hence $2\gamma(Q)=0$ by the Claim. 
We thus have $0=\gamma(C_1)+\gamma(C_2) = \gamma(C)+2\gamma(Q)=\gamma(C)$, a contradiction.
This completes the proof of the lemma.
\end{proof}

Next we prove two technical lemmas which will be used in the proof of Lemma \ref{oddmodellemma}.

\begin{lemma} \label{lem:twoattachedpaths}
	Let $\Gamma$ be an abelian group and let $(G,\gamma)$ be a $\Gamma$-labelled graph.
	Let $P$ be a $\Gamma$-nonzero path in $(G,\gamma)$ with endpoints $s_1,s_2$.
	Let $s'_1,s'_2\in V(G)-V(P)$ and for each $i\in[2]$, let $Q_i$  be a path from $s'_i$ to $V(P)-\{s_1,s_2\}$ such that $Q_1$ is disjoint from $Q_2$.
	Then there is a $\Gamma$-nonzero $\{s_1,s_2,s'_1,s'_2\}$-path in $P\cup Q_1\cup Q_2$ with different endpoints than $P$.
\end{lemma}
\begin{proof}
	Note that $P\cup Q_1\cup Q_2$ is a tree with four leaves $\{s_1,s_2,s'_1,s'_2\}$ and two vertices of degree 3, say $x_1$ and $x_2$ where $x_i$ is the endpoint of $Q_i$ in $V(P)$ for each $i\in[2]$.
	Let us assume without loss of generality that $s_1,x_1,x_2,s_2$ occur in this order on $P$.
	If the four $\{s_1,s_2\}$-$\{s'_1,s'_2\}$-paths in $P\cup Q_1\cup Q_2$ are all $\Gamma$-zero, then for each $i\in[2]$ we have $\gamma(s_1 P x_i)=\gamma(x_i P s_2) = -\gamma(Q_i)$, which gives $2\gamma(Q_i) = -\gamma(P)\neq 0$.
	Then $\gamma(x_1 P x_2) = \gamma(x_1 Ps_2)-\gamma(x_2Ps_2) =-\gamma(Q_1)+\gamma(Q_2)$, but this implies that the $s'_1$-$s'_2$-path $s'_1 Q_1x_1Px_2Q_2s'_2$ has weight $\gamma(Q_1) +(-\gamma(Q_1)+\gamma(Q_2)) + \gamma(Q_2) = 2\gamma(Q_2)\neq 0$.
\end{proof}

In the following lemma, we use the notation arising in the proof of Lemma \ref{oddmodellemma}.

\begin{lemma} \label{lem:sixpaths}
	Let $\Gamma$ be an abelian group.
	Let $(G',\gamma')$ be a $\Gamma$-labelled graph and let $s_1, s_2, s'_{j_1^1}, s'_{j_2^1}, s'_{j_1^2}, s'_{j_2^2}$ be six distinct vertices in $V(G')$.
	Let $(Q_1^1,Q_2^1,Q_1^2,Q_2^2,Q,P_1)$ be a tuple of six paths in $(G',\gamma')$ satisfying the following properties (see Figure \ref{FigNZSpath}):
	\begin{enumerate}
		\itemsep 0.2em \parskip 0em  \partopsep=0pt \parsep 0em  
		\item
		For each $i,k\in [2]$, $Q_k^i$ is an $s_i$-$s_{j_k^i}'$-path.
		\item
		$Q$ is a $\Gamma$-zero $s_1$-$s_2$-path.
		\item
		The five paths $Q_1^1,Q_2^1,Q_1^2,Q_2^2, Q$ are disjoint except that for each $i\in[2]$, the vertex $s_i$ belongs to exactly three paths $Q_1^i, Q_2^i, Q$.
		\item
		$P_1$ is a $\Gamma$-nonzero $s_1$-$s_2$-path.
	\end{enumerate}
	Then the union of the six paths $Q_1^1,Q_2^1,Q_1^2,Q_2^2,Q,P_1$ contains a $\Gamma$-nonzero $\{s'_{j_1^1}, s'_{j_2^1}, s'_{j_1^2}, s'_{j_2^2}\}$-path.
\end{lemma}
\begin{proof}
	Let $S'' = \{s'_{j_1^1}, s'_{j_2^1}, s'_{j_1^2}, s'_{j_2^2}\}$ and let $T = Q_1^1 \cup Q_2^1 \cup Q_1^2 \cup Q_2^2 \cup Q$.
	Suppose there does not exist a $\Gamma$-nonzero $S''$-path in $T\cup P_1$.
	Then the edges of $P_1$ can be partitioned into a sequence of maximal paths contained in $T$ and $V(T)$-paths not contained in $T$.
  Note that $P_1$ has at least one edge not in $E(T)$, since otherwise we would have $P_1=Q$, a contradiction as $\gamma'(P_1)\neq 0=\gamma'(Q)$.
	Hence, there exists a subpath $P$ of $P_1$ that is a $V(T)$-path not contained in $T$.
	Let $u$ and $v$ denote the two endpoints of $P$.

	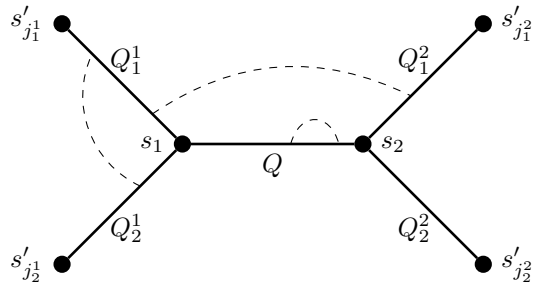
\begin{figure}[t]
		\centering
		\begin{tikzpicture}[scale=0.8]
\tikzstyle{smallvx}=[thick,circle,inner sep=0.cm, minimum size=2mm, fill=black, draw=black]
\tikzstyle{hedge}=[line width=1pt]

\node[smallvx] (sj11) at (0,4) [label=left:$s_{j_1^1}'$] {};
\node[smallvx] (sj12) at (0,0) [label=left:$s_{j_2^1}'$] {};
\node[smallvx] (s1) at (2,2) [label=left:$s_1$] {};
\node[smallvx] (s2) at (5,2) [label=right:$s_2$] {};
\node[smallvx] (sj21) at (7,4) [label=right:$s_{j_1^2}'$] {};
\node[smallvx] (sj22) at (7,0) [label=right:$s_{j_2^2}'$] {};

\draw[hedge] (s1) edge node[below] {$Q$} (s2);
\draw[hedge] (s1) edge node[above] {$\ Q_1^1$} (sj11);
\draw[hedge] (s1) edge node[below] {$\ Q_2^1$} (sj12);
\draw[hedge] (s2) edge node[above] {$Q_1^2\ \ $} (sj21);
\draw[hedge] (s2) edge node[below] {$Q_2^2\ \ $} (sj22);

\draw[dashed] (1.3,1.3) edge[bend left=45] (0.5,3.5);
\draw[dashed] (3.8,2) edge[bend left=75, looseness=1.8] (4.6,2);
\draw[dashed] (1.5,2.5) edge[bend left] (5.8,2.8);
\end{tikzpicture}
		\caption{The tree $T=Q_1^1 \cup Q_2^1 \cup Q_1^2 \cup Q_2^2 \cup Q$. The dashed lines indicate some possible $V(T)$-subpaths $P$ of $P_1$.}
		\label{FigNZSpath}
	\end{figure}

	Suppose that $u$ and $v$ both lie in $Q$.
	If $\gamma'(uQv) \neq \gamma'(P)$, then we get a $\Gamma$-nonzero $S''$-path say from $s'_{j_1^1}$ to $s'_{j_1^2}$ by rerouting the path $s_{j_1^1}'Ts_{j_1^2}'$ along $P$, a contradiction.
	On the other hand, if $\gamma'(uQv) = \gamma'(P)$, then we can reroute $Q$ through $P$ and still satisfy all the conditions of the lemma, while reducing the size of the union of the six paths.
	Therefore, we may assume that  $u$ and $v$ do not both lie in $Q$.
	Similarly, we may assume that $u$ and $v$ do not both lie in any one path $Q_k^i$. 
	
	Next suppose that $u$ and $v$ are in $Q_1^i \cup Q_2^i$ for some $i \in [2]$. 
	Assume without loss of generality that $u \in V(Q_1^i)-s_i$ and $v\in V(Q_2^i)-s_i$.
	Since there is no $\Gamma$-nonzero $S''$-path in $T\cup P$, by Lemma \ref{threepathscyclelemma}(a), we have $2\gamma'(P)=2\gamma'(uQ_1^is_i)=2\gamma'(vQ_2^is_i)=0$.
	Moreover, we have $\gamma'(P)=\gamma'(uTv)$, since otherwise there is a $\Gamma$-nonzero $s'_{j_1^i}$-$s'_{j_2^i}$-path in $T\cup P$. 
	Similarly, if one endpoint of $P$ is in $Q_k^i$ and the other is in $Q$, or if one endpoint is in $Q_k^1$ and the other is in $Q_\ell^2$, then $2\gamma'(P)=2\gamma'(uTs_i)=2\gamma'(vTs_i)=0$ for each $i\in[2]$ (since $\gamma'(Q)=0$), and $\gamma'(P)=\gamma'(uTv)$. 
	Thus, if $U$ denotes the set consisting of every vertex that is an endpoint of a subpath of $P_1$ that is a $V(T)$-path not contained in $T$, then for all $u\in U$, we have $2\gamma'(uTs_i)=0$ for each $i\in[2]$.
	This then implies that every path $R$ in $T$ with both endpoints in $U \cup \{s_1,s_2\}$ satisfies $2\gamma'(R)=0$.
	
	Now let $(s_1=u_1, u_2, \dots, u_n = s_2)$ denote the sequence of vertices in $U\cup\{s_1,s_2\}$ that occur on $P_1$ in this order.
	Let $W=u_1Tu_2Tu_3\dots u_{n-1}Tu_n$, which is a walk from $s_1$ to $s_2$ contained in $T$.
	We have shown above that $\gamma'(u_iTu_{i+1}) = \gamma'(u_iP_1u_{i+1})$ and $2\gamma'(u_iTu_{i+1})=0$ for all $i\in[n-1]$.
	The first equality implies that $\gamma'(W) := \sum_{i=1}^{n-1}\gamma'(u_iTu_{i+1}) = \gamma'(P_1) \neq 0$, whereas the second equality implies that $\gamma'(W) = \gamma'(s_1Ts_2) = \gamma'(Q)=0$, a contradiction.
\end{proof}

Let $R_4(n,m)$ denote the Ramsey numbers for 4-uniform hypergraphs. In other words, if $r \geq R_4(n,m)$, then for every red-blue colouring of the hyperedges of the complete 4-uniform hypergraph on $r$ vertices, there is either a red complete hypergraph on $n$ vertices or a blue complete hypergraph on $m$ vertices. The existence of $R_4(n,m)$ was proved by Ramsey \cite{Ramsey}.

We now prove the main lemma of this section.
\begin{lemma}
\label{oddmodellemma}
Let $\Gamma$ be an abelian group and let $t \geq 2$ be an integer.
Let $(G,\gamma)$ be a $\Gamma$-labelled graph containing a $K_{R_4(t,m(t))}$-model $\pi$ where $m(t) = 50(150t^4)^4+1+300t^4$. 
Then either there is a $\Gamma$-odd $K_t$ model in $(G,\gamma)$ that is an enlargement of $\pi$, or there exists $X \subseteq V(G)$ with $|X| < 50(150t^4)^4$ such that the $\mct_\pi$-large 3-block of $(G-X,\gamma)$ is $\Gamma$-bipartite.
\end{lemma}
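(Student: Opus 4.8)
The plan is to branch on a Ramsey colouring of the $4$-element subsets of the index set of $\pi$, and, in the resulting $\Gamma$-bipartite case, to try to grow that submodel into a $\Gamma$-odd $K_t$-model by gluing on $\Gamma$-nonzero paths supplied by Wollan's theorem on nonzero $A$-paths (Theorem~\ref{nzapathslemma}); when those paths are unavailable, the same theorem hands us the bounded set $X$. Concretely, I would first colour a $4$-set $\{i,j,k,l\}$ red if $(\pi[v_i,v_j,v_k,v_l],\gamma)$ contains a $\Gamma$-nonzero cycle and blue otherwise. Since $\pi$ has $R_4(t,m(t))$ trees, this yields either a red $K_t$-submodel --- which is by definition a $\Gamma$-odd $K_t$-model and, being a submodel of $\pi$, an enlargement of $\pi$, so the first conclusion holds --- or a blue $K_m$-submodel $\eta$ of $\pi$ with $m=m(t)$. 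In the latter case $\eta$ is a $\Gamma$-bipartite $K_m$-model, so by Lemma~\ref{bipmodelpathlemma} the $\Gamma$-labelled graph $(\eta[V(K_m)],\gamma)$ is $\Gamma$-bipartite and, by the Claim inside its proof, $2\gamma(P)=0$ for every $b(\eta)$-path $P$.

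Next I would set $k:=150t^4$ and $d:=2k$ (so that $d<(m-1)/2$). In each tree $\eta(v_i)$ choose a vertex $s_i$: a $d$-branching vertex if one exists, and otherwise a vertex of the path $R$ furnished by Lemma~\ref{lbranchinglemma}(1), recording by Lemma~\ref{lbranchinglemma}(2) the direction along $R$ in which $\eta(v_i)$ branches toward $\eta(v_j)$ for all but at most $6d$ indices $j$. Put $S:=\{s_1,\dots,s_m\}$ and apply Theorem~\ref{nzapathslemma} to $(G,\gamma)$ with $A:=S$ and parameter $k$. Then either (a) there exist $k$ disjoint $\Gamma$-nonzero $S$-paths $P_1,\dots,P_k$, or (b) there is $X\subseteq V(G)$ with $|X|\le 50k^4-4<50(150t^4)^4$ meeting every $\Gamma$-nonzero $S$-path.

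In case (a), each $P_j$ joins $s_{a_j}$ and $s_{b_j}$ in two distinct trees. The aim is to turn $\eta(v_{a_j})\cup P_j\cup\eta(v_{b_j})$ into an honest super-tree, disjoint from all the others, still carrying a $\Gamma$-nonzero path between its two constituent trees. Since the $P_j$ are pairwise disjoint, the set $U$ of trees met internally by some $P_j$ is small relative to $m$; using the recorded branching data I would reroute each $P_j$ through branching vertices inside any tree it revisits, discard the trees of $U$, and --- since $m-|U|\ge 2k+5$ --- retain at least $t$ viable super-trees. Between any two selected super-trees there is an $\eta$-edge ($\eta$ being a complete model), and I would attach these joining edges to the two ``sides'' of each super-tree so that any cycle meeting three super-trees is forced across their $\Gamma$-nonzero paths; applying Lemma~\ref{threepathscyclelemma}(b) (the ``two arcs of differing weight'' part) to an arbitrary $4$ of the selected super-trees and the $\eta$-edges among them then produces a $\Gamma$-nonzero cycle in the corresponding $\mu[v_i,v_j,v_k,v_l]$, so the resulting $K_t$-model is $\Gamma$-odd and, each of its trees containing a tree of $\eta$ (hence of $\pi$), it is an enlargement of $\pi$. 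I expect this step to be the crux: in the directed setting one may assume the ambient $\Gamma$-bipartite model carries the zero labelling, but here Lemma~\ref{bipmodelpathlemma} only controls paths between \emph{branching} vertices, so a $\Gamma$-nonzero $S$-path that re-enters a tree or threads a third tree cannot be freely shortcut without losing control of its weight; making the rerouting, the discarding of trees, and the placement of joining edges cohere for all $k$ paths and the $t$ super-trees --- which is exactly what the $d$-central/$d$-branching bookkeeping of Lemma~\ref{lbranchinglemma} is for --- is the delicate part.

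Finally, in case (b) I would show that $X$ itself is the set we want. At most $|X|<50k^4$ trees meet $X$, so at least $m-|X|\ge 2k+5\ge 3$ of them survive in $G-X$, and since $\mct_\eta$ is a truncation of $\mct_\pi$ these trees, together with the $\eta$-edges among them, lie in the $\mct_\pi$-large $3$-block $B$ of $(G-X,\gamma)$. If $(B,\gamma)$ were not $\Gamma$-bipartite there would be a $\Gamma$-nonzero cycle $C$ in $G-X$ formed from $B$-paths; because $B$ is a $3$-block, no two vertices of $B$ (in particular no vertex of $V(C)$ and no surviving $s_i$) are separated in $G-X$ by a set of two vertices, so Menger's theorem yields three disjoint $S$-$V(C)$-paths in $G-X$, and by the corollary to Lemma~\ref{threepathscyclelemma} their union with $C$ contains a $\Gamma$-nonzero $S$-path in $G-X$, contradicting the choice of $X$. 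Hence the $\mct_\pi$-large $3$-block of $(G-X,\gamma)$ is $\Gamma$-bipartite, and as $|X|<50(150t^4)^4$ the lemma follows.
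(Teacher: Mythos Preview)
Your overall strategy---Ramsey on $4$-sets, then in the $\Gamma$-bipartite case pick representatives $s_i$ in the trees and apply Theorem~\ref{nzapathslemma}---matches the paper's, and your case~(b) is essentially correct. Case~(a), however, has two genuine gaps.

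First, you assert that ``the set $U$ of trees met internally by some $P_j$ is small relative to $m$'' without justification. This is false for an arbitrary family of $k$ disjoint nonzero $S$-paths: nothing prevents a single such path from threading through many trees of $\eta$. The paper obtains the bound $|U|\le k$ only after choosing $\{P_1,\dots,P_k\}$ to \emph{minimize} the number of edges lying outside the trees $\eta(v_i)$; a rerouting argument (replacing a segment of $P_j$ by a path inside a tree it visits) then shows that a violating configuration would produce a cheaper family.

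Second, and more seriously, even granting $|U|\le k$, your super-trees $\eta(v_{a_j})\cup P_j\cup\eta(v_{b_j})$ need not be pairwise disjoint: $P_1$ may pass through the endpoint tree $\eta(v_{a_2})$ of $P_2$, so the two super-trees overlap. Discarding the trees in $U$ does not help, since these overlaps occur in endpoint trees, not in $U$. The paper resolves this with a second stage you are missing entirely: after setting aside the $3k$ ``used'' trees, it contracts each remaining (clean) tree $\eta(v_j)$ to a single vertex $s_j'$, defines a compatible labelling $\gamma'$ on the minor $G'$, and applies Theorem~\ref{nzapathslemma} \emph{again} to the set $S'=\{s_j'\}$ to find $t$ disjoint nonzero $S'$-paths. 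A second minimality-and-rerouting claim bounds the trees these new paths meet, and crucially their endpoints now lie in clean trees, so the resulting super-trees are genuinely disjoint. The substantive work in this second stage is proving that $t$ nonzero $S'$-paths actually exist: one assumes a small hitting set $Y$ for nonzero $S'$-paths, locates a surviving nonzero $S$-path $P_1$, builds an auxiliary tree $T$ from the $d$-branching data at its endpoints (this is where your ``recorded'' branching information is actually used, together with the shifted labelling making every $b(\eta)$-path zero), and derives a contradiction by decomposing $P_1$ into $V(T)$-segments. Your sketch (``reroute each $P_j$ through branching vertices\dots discard the trees of $U$'') does not supply any of this.

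Finally, your proposed verification of $\Gamma$-oddness via Lemma~\ref{threepathscyclelemma}(b) is unclear as stated; the paper instead arranges the model so that the unique cycle through any three of its trees decomposes as one of the nonzero paths $Q_j$ together with a $b(\eta)$-path in $\eta$, which has weight $0$ after shifting.
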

\begin{proof} 
Let $\mch$ be the complete 4-uniform hypergraph on the vertex set of $K_{R_4(t,m(t))}$. 
Colour a hyperedge $\{w,x,y,z\}$ of $\mch$ red if $\pi[w,x,y,z]$ contains a $\Gamma$-nonzero cycle and blue if $\pi[w,x,y,z]$ is $\Gamma$-bipartite. 
Then there is either a $\Gamma$-odd $K_t$-submodel or a $\Gamma$-bipartite $K_{m(t)}$-submodel of $\pi$.
In the first case we are done (since submodels of $\pi$ are also enlargements of $\pi$), so we assume the existence of a $\Gamma$-bipartite $K_{m(t)}$-model $\mu$ that is an enlargement of $\pi$.
Write $V(K_{m(t)}) = \{v_1,\dots,v_{m(t)}\}$.
By Lemma \ref{bipmodelpathlemma}, $(\mu,\gamma)$ is a $\Gamma$-bipartite $\Gamma$-labelled graph and we may assume by Lemma \ref{lemma3connshiftequivalent} and Proposition \ref{prop:modelbranch3conn} that, after possibly shifting,
\begin{equation}\label{branchingpathzeroequation}
\text{every $b(\mu)$-path in $(\mu,\gamma)$ is $\Gamma$-zero.} \tag{$*$}
\end{equation}

For each $i=1,2,\dots, 50(150t^4)^4$, sequentially in this order, pick a vertex $s_i \in \mu(v_i)$ as follows. 
If $\mu(v_i)$ has a $50t^4$-branching vertex, then define $s_i$ to be such a vertex. 
Otherwise, by Lemma \ref{lbranchinglemma}, the $\mcc_i^{50t^4}$-subtree of $\mu(v_i)$ is a path $R$ and there are at least $50(150t^4)^4$ indices $j \in [m(t)]-\{i\}$ such that the $\mu(v_iv_j)_i$-$R$-path in $\mu(v_i)$ ends at an internal vertex of $R$.
Pick one such index $j$ such that
\begin{equation*}
j \not \in \{\kappa(i'):i'<i \text{ and $\kappa(i')$ was previously defined}\},
\end{equation*}
and define $\kappa(i)=j$.
Such an index $j$ always exists as long as $i \leq 50(150t^4)^4$.
Define $s_i$ to be the (internal) vertex of $R$ such that the $\mu(v_iv_{\kappa(i)})_i$-$R$-path in $\mu(v_i)$ ends at $s_i$.
Note that $s_i$ is branching in $\mu$ as it is an internal vertex of $R$.

Define $S = \{s_i: i \in [50(150t^4)^4]\}$.
Since each $s_i$ is branching in $\mu$, we have $S \subseteq b(\mu)$.
By Theorem \ref{nzapathslemma}, either there exist $150t^4$ disjoint $\Gamma$-nonzero $S$-paths in $(G,\gamma)$ or there exists $X \subseteq V(G)$ with $|X| \leq 50(150t^4)^4-3$ such that $(G-X,\gamma)$ does not contain a $\Gamma$-nonzero $S$-path.

\begin{claim}
If $X\subseteq V(G)$, $|X| \leq 50(150t^4)^4-3$, and $(G-X,\gamma)$ does not contain a $\Gamma$-nonzero $S$-path, then the $\mct_\pi$-large 3-block $(B,\gamma_B)$ of $(G-X,\gamma)$ is $\Gamma$-bipartite.
\end{claim}

\begin{subproof} 
Since $|S|=50(150t^4)^4$, there are three vertices of $S$, say $s_1,s_2,s_3$ without loss of generality, such that $X$ is disjoint from $\mu(v_1) \cup \mu(v_2) \cup \mu(v_3)$.
Note that for each $i\in[3]$, $\mu(v_i)$ intersects $V(B)$ by the definition of $\mct_\pi$.
For $i \in [3]$, define $x_i \in V(\mu(v_i))$ to be $s_i$ if $s_i \in V(B)$ and, otherwise, a closest vertex to $s_i$ in $\mu(v_i)$ that is in $V(B)$.

Now suppose that $(B,\gamma_B)$ contains a simple $\Gamma$-nonzero cycle $C_B$.
Since $B$ is 3-connected, there exist three disjoint $V(C_B)$-$\{x_1,x_2,x_3\}$-paths $P_1',P_2',P_3'$ in $B$. 
Applying Proposition \ref{prop:3blockpathcycle}, we obtain a $\Gamma$-nonzero cycle $C$ of $(G-X,\gamma)$ corresponding to $C_B$ and three disjoint $V(C_B)$-$\{x_1,x_2,x_3\}$-paths  $P_1,P_2,P_3$ in $(G-X,\gamma)$ corresponding to $P_1',P_2',P_3'$ respectively.
Since $X$ is disjoint from $\mu(v_i)$ for each $i\in[3]$, the three paths $x_i\mu(v_i)s_i$ together with $C\cup P_1\cup P_2\cup P_3$ gives three disjoint \mbox{$V(C)$-$\{s_1,s_2,s_3\}$-paths} in $(G-X,\gamma)$.
By Lemma \ref{lem:threeACpathsnonzero}, there exists a $\Gamma$-nonzero $S$-path, a contradiction. 
Thus $(B,\gamma_B)$ has no simple $\Gamma$-nonzero cycle and by Lemma \ref{lemma3connshiftequivalent}, $(B,\gamma_B)$ is $\Gamma$-bipartite.
\end{subproof}

So we may assume that there exist $150t^4$ disjoint $\Gamma$-nonzero $S$-paths in $(G,\gamma)$.
Let $\mcp=\{P_1,\dots,P_{150t^4}\}$ be a set of $150t^4$ disjoint $\Gamma$-nonzero $S$-paths minimizing the number of edges in $\cup \mcp$ not in a tree $\mu(v_i)$ of $\mu$; that is, we minimize
 $$\left|\left(\textstyle\bigcup_{j=1}^{150t^4} E(P_j)\right) - \left(\textstyle\bigcup _{j=1}^{m(t)}E(\mu(v_j))\right)\right|.$$ 
By relabelling indices in $[50(150t^4)^4]$ and updating $\kappa$ accordingly, we may assume that $P_i$ has endpoints $s_{2i-1}$ and $s_{2i}$ for all $i\in [150t^4]$. 
Note that $|S|=50(150t^4)^4$ and $|\mcp|=150t^4$.

\begin{claim}
\label{pathsintersectingtreesclaim}
There are at most $|\mcp|$ indices $j$ such that $2|\mcp|< j \leq |S|$ (i.e.~$s_j$ is not an endpoint of a path in $\mcp$) and $\mu(v_j)$ intersects a path in $\mcp$. 
\end{claim}

\begin{subproof}
Suppose there are more than $|\mcp|$ indices $j$ such that $j>2|\mcp|$ and $\mu(v_j)$ intersects a path in $\mcp$. 
For each such index $j$, pick a closest vertex $x_j$ to $s_j$ in $\mu(v_j)$ such that $x_j$ is in a path in $\mcp$.
Then there exist two such indices $j_1$ and $j_2$ such that $x_{j_1}$ and $x_{j_2}$ belong to one path $P$ of $\mcp$. 
For each $i\in[2]$, let $Q_i = s_{j_i}\mu(v_{j_i})x_{j_i}$.
Then each $Q_i$ is disjoint from every path in $\mcp-\{P\}$, and every $S$-path contained in $P\cup Q_1\cup Q_2$ distinct from $P$ has fewer edges not in a tree of $\mu$ than $P$.
But by Lemma \ref{lem:twoattachedpaths}, $P\cup Q_1\cup Q_2$ contains a $\Gamma$-nonzero $S$-path $P'$ distinct from $P$, so $(\mcp-\{P\})\cup \{P'\}$ contradicts our choice of $\mcp$.
\end{subproof}

By relabelling among indices $j$ with $2|\mcp| < j \leq |S|$ and updating $\kappa$ accordingly, we may assume that no path in $\mcp$ contains a vertex in $\mu(v_j)$ for all $j$ with $3|\mcp| < j \leq |S|$.
We now construct a minor $G'$ of $G$ as follows. 
Let $J' = \{j: 3|\mcp| < j \leq |S|\}$ and for each $j \in J'$, contract the tree $\mu(v_j)$ into a single vertex $s_j'$ and delete all loops. 
Let $S'=\{s_j':j \in J'\}$.

Define a $\Gamma$-labelling  $\gamma'$ of $G'$ as follows. 
Let $e=x'y' \in E(G')$.
If $e$ is not incident to $S'$, then define $\gamma'(e)=\gamma(e)$. 
If $x'=s_j'$ and $y' \not\in S'$, then let $x$ be the endpoint of $e$ in $G$ in $\mu(v_j)$ and define $\gamma'(e) = \gamma(e)+\gamma(x\mu(v_j)s_j)$. 
Similarly, if $x'=s_j'$ and $y'=s_k'$, then let $x$ and $y$ be the corresponding endpoints of $e$ in $G$ and define $\gamma'(e) = \gamma(e)+\gamma(x\mu(v_j)s_j)+\gamma(y\mu(v_k)s_k)$.
Then each $S'$-path $P'$ in $(G',\gamma')$ with endpoints $s_j'$ and $s_k'$ corresponds to an $s_j$-$s_k$-path $P$ in $(G,\gamma)$ of the same weight, obtained by extending the endpoints of $P'$ in $G$ along $\mu(v_j)$ and $\mu(v_k)$ to $s_j$ and $s_k$ respectively.

Let $\mu'$ be the resulting $K_{m(t)}$-model in $G'$ obtained from $\mu$. In other words, $\mu'(v_i)=\mu(v_i)$ for $i\not\in J'$, $\mu'(v_j) = \{s_j'\}$ for $j \in J'$, and $\mu'(v_iv_j)=\mu(v_iv_j)$ for all $i,j \in [m(t)]$.
Note that for $i \not\in J'$, the $d$-central and $d$-branching vertices in $\mu(v_i)$ and $\mu'(v_i)$ are the same.

\begin{claim}
\label{s'pathsclaim}
There exist $t$ disjoint $\Gamma$-nonzero $S'$-paths in $(G',\gamma')$.
\end{claim}

\begin{subproof}
Suppose not. By Theorem \ref{nzapathslemma}, there exists $Y \subseteq V(G')$ with $|Y|\leq 50t^4-4$ such that $(G'-Y,\gamma')$ does not contain a $\Gamma$-nonzero $S'$-path.
Recall that no path in $\mcp$ intersects $\mu(v_j)$ for $j\in J'$, so the paths in $\mcp$ are unaffected by the minor operations used to obtain $G'$. 
We may thus consider $\mcp$ also as a linkage in $(G',\gamma')$ whose paths are disjoint from $S'$.

Since $|Y| < 50t^4$ and $|\mcp|=150t^4$, there are more than $100t^4$ paths $P_i$ that are disjoint from $Y-S'$.
Among these paths there are more than $50t^4$ paths $P_i$ such that $Y$ is also disjoint from $\mu'(v_{2i-1})\cup \mu'(v_{2i})$.
Among these, there is a path $P_i$ such that $Y$ is also disjoint from $\mu'(v_{\kappa(2i-1)})\cup \mu'(v_{\kappa(2i)})$, where we define $\mu'(v_{\kappa(i)}) = \emptyset$ if $\kappa$ is not defined on $i$.
By relabelling the paths in $\mcp$ and updating the indices in $[2|\mcp|]$ and $\kappa$ accordingly, we may assume that $Y$ is disjoint from $P_1 \cup \mu'(v_1) \cup \mu'(v_2) \cup \mu'(v_{\kappa(1)}) \cup \mu'(v_{\kappa(2)})$.

Let $Y' \subseteq [m(t)]-\{1\}$ be the set of indices $j$ such that either $\mu'(v_j)$ contains a vertex in $Y$ or $j$ is equal to the index $\kappa(2)$, if defined.
Then $|Y'|\leq |Y|+1 \leq 50t^4-3$.

Define $j_1^1,j_2^1,j_3^1$ as follows.
\begin{description}
	\item[Case 1:]
	$s_1$ is $50t^4$-branching in $\mu'$.

	By the definition of a $50t^4$-branching vertex, there exist $j_1^1,j_2^1,j_3^1 \in [m(t)] - \{1\} - Y'$ such that $s_1$ branches to $\{\mu'(v_{j_1^1}^{\ }),\mu'(v_{j_2^1}^{\ }),\mu'(v_{j_3^1}^{\ })\}$ in $\mu'$.

	\item[Case 2:]
	$s_1$ is not $50t^4$-branching in $\mu'$.

	By Lemma \ref{lbranchinglemma}, the $\mcc_1^{50t^4}$-subtree of $\mu'(v_1)$ is a path $R$.
	By definition, $s_1$ is an internal vertex of $R$ and the $\mu'(v_1v_{\kappa(1)})_1$-$R$-path in $\mu'(v_1)$ ends at $s_1$.
	Since $s_1$ is an internal vertex of $R$, there are exactly two connected components $R_1$ and $R_2$ of $\mu'(v_1)-s_1$ containing a vertex in $\mcc_1^{50t^4}$, and for each $k \in [2]$, there are more than $50t^4$ indices $j$ such that $\mu'(v_1v_j)_1 \in V(R_k)$ (by the definition of a $50t^4$-central vertex).
	Choose one such index $j_k^1 \not\in Y'$ for each $k \in [2]$ and define $j_3^1 = \kappa(1)$.
	Then $s_1$ branches to $\{\mu'(v_{j_1^1}^{\ }),\mu'(v_{j_2^1}^{\ }),\mu'(v_{j_3^1}^{\ })\}$.
\end{description}
We choose $j_1^2,j_2^2,j_3^2$ in a similar manner for $\mu'(v_2)$ with the additional condition that $j_k^2 \not\in \{j_1^1,j_2^1,j_3^1\}$.
\begin{description}
	\item[Case 1:]
	$s_2$ is $50t^4$-branching in $\mu'$.
	
	Since $|Y'| \leq 50t^4-3$, we may choose $j_1^2,j_2^2,j_3^2 \in [m(t)] - \{2\} - (Y' \cup \{j_1^1,j_2^1,j_3^1\})$ such that $s_2$ branches to $\{\mu'(v_{j_1^2}^{\ }),\mu'(v_{j_2^2}^{\ }),\mu'(v_{j_3^2}^{\ })\}$ in $\mu'$.

	\item[Case 2:]
	$s_2$ is not $50t^4$-branching in $\mu'$.
	
	The $\mcc_2^{50t^4}$-subtree of $\mu'(v_2)$ is a path $R$ and $s_2$ is an internal vertex of $R$.
	Let $R_1$ and $R_2$ denote the two connected components of $\mu'(v_2)-s_2$ containing a vertex in $\mcc_2^{50t^4}$.
	Then for each $k\in[2]$, there are more than $50t^4$ indices $j$ such that $\mu'(v_2v_j)_2 \in V(R_k)$, and since $|Y'|\leq 50t^4-3$, we may choose one such index $j_k^2 \not\in Y'\cup \{j_1^1,j_2^1,j_3^1\}$.
	Define $j_3^2 = \kappa(2)$. 
\end{description} 

Note that $\mu'(v_{j_k^i})$ is disjoint from $Y$ for all $i\in[2]$ and $k\in[3]$.
For $i \in [2]$, let $T_i$ denote the $\big\{\mu'(v_iv_{j_1^i}^{\ }),\mu'(v_iv_{j_2^i}^{\ }),\mu'(v_iv_{j_3^i}^{\ })\big\}$-extension of $\mu'(v_i)$.
Then $T_i$ is a 3-star centered at $s_i$.
We modify $T_i$ by extending its legs if necessary so that its leaves are in $S'$ by the following procedure. 
For each $i \in [2]$ and $k \in [3]$, if $j_k^i \not\in J'$, then choose a new $\ell_k^i \in J' - \{j_1^1,j_2^1,j_3^1,j_1^2,j_2^2,j_3^2\}$ so that the $\ell_k^i$ are distinct and $Y$ is disjoint from each $\mu'(v_{\ell_k^i})$.
Extend the leg in $T_i$ ending with $\mu'(v_iv_{j_k^i})$ through $\mu'(v_{j_k^i})$ and through the edge $\mu'(v_{j_k^i}v_{\ell_k^i})$.
Since $\mu'(v_{j_k^i})$ is disjoint from $Y$, $T_i$ is still disjoint from $Y$.
Redefine $j_k^i$ to be $\ell_k^i$.

After this procedure, $\{s_{j_1^1}',s_{j_2^1}',s_{j_3^1}'\}$ and $\{s_{j_1^2}',s_{j_2^2}',s_{j_3^2}'\}$ are the leaves of $T_1$ and $T_2$ respectively, the six leaves are in $S'-Y'$ and distinct, and $T_1$ is disjoint from $T_2$.
For each $i \in [2]$ and $k \in [3]$, let $Q_k^i$ denote the path from $s_i$ to $s_{j_k^i}'$ in $T_i$.
Now consider the unique path $Q$ from $s_1$ to $s_2$ in $\mu'[v_1,v_2]$. 
Then for each $i\in [2]$, at least two of the paths $Q_1^i,Q_2^i,Q_3^i$ intersect $Q$ only at $s_i$.
Without loss of generality, assume that $Q_1^i,Q_2^i$ intersect $Q$ only at $s_i$.
Note that $\gamma'(Q)=\gamma(Q)=0$ by (\ref{branchingpathzeroequation}).
Hence, the five paths $(Q_1^1,Q_2^1, Q_1^2, Q_2^2, Q)$ satisfy the first three conditions of Lemma \ref{lem:sixpaths}.

Recall that $P_1$ (considered as a path in $(G',\gamma')$) is a $\Gamma$-nonzero path from $s_1$ to $s_2$ in $(G'-Y,\gamma')$ disjoint from $S'$.
We thus have a tuple of paths $(Q_1^1,Q_2^1,Q_1^2,Q_2^2,Q,P_1)$ in $(G'-Y,\gamma')$ satisfying the four conditions of Lemma \ref{lem:sixpaths}, from which it follows that there is a $\Gamma$-nonzero $\{s'_{j_1^1}, s'_{j_2^1}, s'_{j_1^2}, s'_{j_2^2}\}$-path (hence a $\Gamma$-nonzero $S'$-path) in $(G'-Y,\gamma')$, a contradiction.
This completes the proof of the claim.
\end{subproof}

Let $\mcq' = \{Q_1',\dots,Q_t'\}$ be a set of $t$ disjoint $\Gamma$-nonzero $S'$-paths in $(G',\gamma')$ minimizing the number of edges in $\cup\mcq'$ not contained in a tree $\mu'(v_k)$ of $\mu'$; that is, we minimize
$$\left|\left(\textstyle\bigcup_{i=1}^{t} E(Q_i')\right) - \left(\textstyle \bigcup_{k=1}^{[m(t)]} E(\mu'(v_k)) \right) \right|.$$ 
Let $j_1,\dots,j_{2t}\in J'$ be the indices such that $Q_i'$ has endpoints $s_{j_{2i-1}^{\ }}'$ and $s_{j_{2i}^{\ }}'$.
Since each $Q_i'$ is an $S'$-path, no path in $\mcq'$ contains a vertex $s_j'$ with $j \in J' - \{j_1,\dots,j_{2t}\}$.

\begin{claim} \label{claim:mcq3t}
There are at most $3t$ indices $k \not\in J'$ such that $\mu'(v_k)$ intersects a path in $\mcq'$.
\end{claim}

\begin{subproof}
Suppose there are more than $3t$ indices $k\not\in J'$ such that $\mu'(v_k)$ intersects a path in $\mcq'$.
Choose $3t+1$ such indices $k_1,\dots,k_{3t+1}$.
Let $\ell_1,\dots,\ell_{3t+1} \in J'-\{j_1,\dots,j_{2t}\}$ be distinct.
For each $i\in[3t+1]$, let $x_{k_i}$ be a vertex in $V(\mu'(v_{k_i}))$ that is in a path in $\mcq'$, chosen to be closest to $s'_{\ell_i}$ in $\mu'[v_{k_i}, v_{\ell_i}]$.
Then there is a path $Q'$ in $\mcq'$ containing at least four of the vertices $x_{k_i}$, say $x_{k_1},x_{k_2},x_{k_3},x_{k_4}$.
Let us assume without loss of generality that the vertices $x_{k_1},x_{k_2},x_{k_3},x_{k_4}$ occur in this order on $Q'$.
Define $R_2=x_{k_2}\mu'[v_{k_2},v_{\ell_2}]s'_{\ell_2}$ and $R_3=x_{k_3}\mu'[v_{k_3},v_{\ell_3}]s'_{\ell_3}$.
Note that $R_2$ and $R_3$ are disjoint from every path in $\mcq'-\{Q'\}$, and every $S'$-path in $Q'\cup R_2\cup R_3$ distinct from $Q'$ contains fewer edges not contained in a tree $\mu'(v_k)$ than $Q'$.
But by Lemma \ref{lem:twoattachedpaths}, $Q'\cup R_2\cup R_3$ contains a $\Gamma$-nonzero $S'$-path $Q''$ distinct from $Q'$, so $(\mcq-\{Q'\})\cup\{Q''\}$ contradicts our choice of $\mcq'$.
\end{subproof}

We now use $\mcq'$ to construct a $\Gamma$-odd $K_t$-model in $(G,\gamma)$ that is an enlargement of $\mu$.
Recall that each $S'$-path $Q_i'\in\mcq'$ corresponds to an $S$-path $Q_i$ in $(G,\gamma)$ of the same weight, obtained by extending the endpoints of $Q_i'$ in $G$ along $\mu(v_{j_{2i-1}^{\ }})$ and $\mu(v_{j_{2i}^{\ }})$ to $s_{j_{2i-1}^{\ }}$ and $s_{j_{2i}^{\ }}$ respectively.
Note in particular that $Q_i$ is disjoint from $\mu(v_{j_k})$ for all $k \in [2t]-\{2i-1,2i\}$, and that $Q_i\cap \mu(v_{j_{2i-1}^{\ }})$ and $Q_i\cap \mu(v_{j_{2i}^{\ }})$ are (possibly trivial) paths.

Let $\mcq=\{Q_1,\dots,Q_t\}$.
For simplicity of notation, we relabel the indices in $[50(150t^4)^4]$ so that $Q_i$ has endpoints $s_{2i-1}$ and $s_{2i}$ (we will not need $\mcp$ nor $J'$ for the remainder of the proof).
After this relabelling, we have that for each $i \in [t]$, $Q_i$ is disjoint from $\mu(v_j)$ for $j \in [2t] - \{2i-1,2i\}$, and that $Q_i \cap \mu(v_{2i-1})$ and $Q_i \cap \mu(v_{2i})$ are paths. 
Furthermore, by Claim \ref{claim:mcq3t}, we may assume that for all $j > 5t$, $\mu(v_j)$ does not intersect a path in $\mcq$ .

To construct the $\Gamma$-odd $K_t$-model, first define $L = \emptyset$.
For each $j = 1,\dots,2t$, sequentially, choose an index $\ell_j\not\in [5t]$ to add to $L$ as follows. Note that we keep $|L|\leq 2t$ throughout. Also recall that the definition of the vertex $s_j$ depends on whether it is $50t^4$-branching in $\mu$.
\begin{description}
	\item[Case 1:]
	$s_j$ is $50t^4$-branching in $\mu$.

	Since $Q_{\lceil j/2 \rceil} \cap \mu(v_j)$ is a path, we may choose $\ell_j \not\in L\cup [5t]$ such that the $s_j$-$\mu(v_{\ell_j})$-path in $\mu[v_j,v_{\ell_j}]$ is internally disjoint from $Q_{\lceil j/2\rceil}$. 
	Add $\ell_j$ to $L$.

	\item[Case 2:]
	$s_j$ is not $50t^4$-branching in $\mu$.

	By Lemma \ref{lbranchinglemma}, the  $\mcc_j^{50t^4}$-subtree of $\mu(v_j)$ is a path and $s_j$ is an internal vertex of this path. 
	Then there is a connected component $T$ of $\mu(v_j)-s_j$ containing a vertex in $\mcc_j^{50t^4}$ such that $T$ is disjoint from $Q_{\lceil j/2 \rceil}$ (since $Q_{\lceil j/2 \rceil} \cap \mu(v_j)$ is a path). 
	Moreover, there are more than $50t^4$ indices $k$ such that $\mu(v_jv_k)_j \in V(T)$. 
	Choose one such index $\ell_j$ such that $\ell_j \not\in L \cup [5t]$, and add $\ell_j$ to $L$.
\end{description}		

Define a $K_t$-model $\eta$ as follows. 
Let $\{w_1,\dots,w_t\}$ denote the vertices of $K_t$.
For each $i\in[t]$, define the tree 
\begin{equation*}
\eta(w_i) = \mu(v_{\ell_{2i-1}^{\ }}) \cup \mu(v_{\ell_{2i-1}^{\ }}v_{2i-1}) \cup\mu(v_{2i-1}) \cup Q_i \cup \mu(v_{2i}) \cup \mu(v_{2i}v_{\ell_{2i}^{\ }}) \cup \mu(v_{\ell_{2i}^{\ }}).
\end{equation*}
Note that the path from $\mu(v_{\ell_{2i-1}^{\ }})$ to $\mu(v_{\ell_{2i}^{\ }})$ in $\eta(w_i)$ contains $Q_i$ as a subpath.
For $i < j$, define the edges of $\eta$ as $\eta(w_iw_j) = \mu(v_{\ell_{2i}^{\ }}v_{\ell_{2j-1}^{\ }})$.
Clearly, $\eta$ is a $K_t$-model that is an enlargement of $\mu$ and hence of $\pi$.

\begin{figure}
	\centering
	\resizebox{0.6\textwidth}{!}{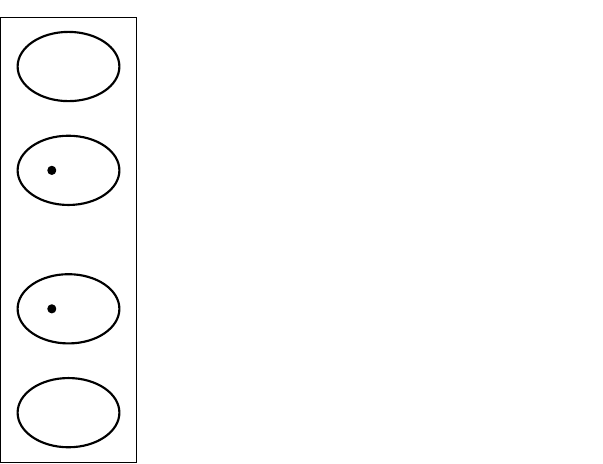}
	\caption{Three trees of the $\Gamma$-odd $K_t$-model $\eta$. Each ellipse represents a tree of $\mu$ and each rectangle represents a tree of $\eta$. The three dashed lines indicate the edges $\eta(w_iw_j)$, $\eta(w_iw_k)$, and $\eta(w_jw_k)$.}
	\label{fig:oddmodel}
\end{figure}

To show that $\eta$ is $\Gamma$-odd, we show in fact that the cycle contained in any three trees of $\eta$ is $\Gamma$-nonzero.
See Figure \ref{fig:oddmodel}.
Let $1\leq i<j<k \leq t$ and let $C$ be the unique cycle in $\eta[w_i,w_j,w_k]$.
Note that $C$ contains the edges $\{\eta(w_iw_j), \eta(w_iw_k), \eta(w_jw_k)\} = \{\mu(v_{\ell_{2i}}v_{\ell_{2j-1}}), \mu(v_{\ell_{2i}}v_{\ell_{2k-1}}), \mu(v_{\ell_{2j}}v_{\ell_{2k-1}})\}$, so we have $C\cap \eta(w_i) \subseteq \mu(v_{\ell_{2i}})$ and $C\cap \eta(w_k) \subseteq \mu(v_{\ell_{2k-1}})$.
Moreover, $C$ is the union of $Q_j$ (which is $\Gamma$-nonzero) and the $s_{2j-1}$-$s_{2j}$-path in $\mu$ through
\begin{align*}
	\mu(v_{2j-1}),\ \mu(v_{\ell_{2j-1}}),\ \mu(v_{\ell_{2i}}),\ \mu(v_{\ell_{2k-1}}),\ \mu(v_{\ell_{2j}}),\ \mu(v_{2j}) 
\end{align*}
in this order.
By (\ref{branchingpathzeroequation}), we have $\gamma(E(C)-E(Q_j))=0$.
Hence, $\gamma(C) = \gamma(Q_j) \neq 0$ and $\eta$ is a $\Gamma$-odd $K_t$-model that is an enlargement of $\pi$.
\end{proof}

\section{Proof of Theorem \ref{flatwallundirectedtheorem}: large flat wall}
\label{sec:flatwall}

In this section we deal with the second outcome of the flat wall theorem.

Let $W$ be an $r$-wall contained in a graph $G$ and let $P$ be a $b(W)$-path in $W$ with endpoints $x,y \in b(W)$ that is not contained in the perimeter of $W$.
Suppose there is an $x$-$y$-path $R$ in $G$ such that $R$ is disjoint from $W-P$, and let $W'$ be the wall obtained from $W$ by replacing $P$ with $R$.
Then we say that $W'$ is a \emph{local rerouting} of $W$.
Note that if $W'$ is a local rerouting of $W$, then $\mct_{W'}=\mct_W$ and, if $W$ is flat, then so is $W'$.

\begin{lemma}
\label{walllemma1}
Let $t \geq 4$ be an integer and let $\Gamma$ be an abelian group.
Let $(G,\gamma)$ be a $\Gamma$-labelled graph containing a flat $(t+2)^2$-wall $(W,\gamma)$.
Then there is a flat $t$-wall $(W_1,\gamma)$ with certifying separation $(C_1,D_1)$ such that $\mct_{W_1}$ is a truncation of $\mct_W$ and either
\begin{enumerate}
	\item[(i)]
	$(W_1,\gamma)$ is facially $\Gamma$-odd, or
	\item[(ii)] 
	the 3-block of $(D_1,\gamma)$ containing the degree 3 vertices of $(W_1,\gamma)$ is $\Gamma$-bipartite.
\end{enumerate}
\end{lemma}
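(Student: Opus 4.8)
The plan is to exploit the planar structure supplied by flatness. The first step is to fix a separation $(C,D)$ of $(G,\gamma)$ certifying that $W$ is flat and a $V(C\cap D)$-reduction $D^{*}$ of $D$ drawn in a disk $\Delta$ with $V(C\cap D)$ on the boundary of $\Delta$, in the cyclic order along the perimeter $O$ of $W$. Since a piece removed by an elementary reduction is separated from the rest of $D$ by at most three vertices, and no three vertices separate a branch vertex of $W$ from the remaining branch vertices of $W$, the reduction destroys no branch vertex of $W$, so one may take $W\subseteq D^{*}$. Working in $D^{*}$ (with parallel edges allowed, each edge of a reduced clique carrying the weight of the corresponding path), every cycle of $D$ lifts to a cycle of $D^{*}$ of the same weight and conversely; hence, for any cycle $Z$ of $D^{*}$ not enclosing all of $V(C\cap D)$, $\gamma(Z)$ equals the sum of the weights of the faces of $D^{*}$ that $Z$ encloses, and the part of $D$ enclosed by $Z$ is $\Gamma$-bipartite if and only if every enclosed face has weight $0$. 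In particular, for any subwall $W'$ of $W$ whose perimeter is disjoint from $V(C\cap D)$, letting $(C',D')$ be the natural separation with $D'$ the part of $D$ enclosed by the perimeter of $W'$, this separation certifies that $W'$ is flat and the $3$-block of $(D',\gamma)$ containing the degree-$3$ vertices of $W'$ is $\Gamma$-bipartite exactly when every face of $D^{*}$ enclosed by the perimeter of $W'$ has weight $0$.

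Next I would dispose of the easy case. If some compact $t$-subwall $W_{1}$ of $W$, $1$-contained in $W$, encloses only faces of weight $0$, then by the previous paragraph the $3$-block of $(D_{1},\gamma)$ containing the degree-$3$ vertices of $W_{1}$ is $\Gamma$-bipartite, where $(C_{1},D_{1})$ is the certifying separation described there; moreover $\mct_{W_{1}}$ is a truncation of $\mct_{W}$ because $W_{1}$ is a subwall of $W$. This is outcome (ii).

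Otherwise no compact $t$-subwall of $W$ encloses only zero faces, and since $W$ is a $(t+2)^{2}$-wall every sufficiently small compact subwall still encloses a nonzero face; using the huge number of disjoint such subwalls inside $W$, one gets a large array of nonzero faces of $D^{*}$ spread across $W$. It then suffices to produce, after a sequence of local reroutings, a $t$-subwall $W_{1}$ of $W$ all of whose bricks are nonzero cycles: then $(W_{1},\gamma)$ is facially $\Gamma$-odd, $\mct_{W_{1}}$ is a truncation of $\mct_{W}$ (both passing to subwalls and local reroutings preserve the induced tangle), and $W_{1}$ is flat (subwalls and local reroutings of flat walls are flat), giving outcome (i). To build $W_{1}$, I would choose its horizontal and vertical dividing paths greedily, exploiting both the freedom in the size and placement of the (possibly ``fat'') bricks of a subwall of $W$ and local reroutings of $W$ through the material exposed by the nonzero faces, so as to arrange that the faces enclosed by each brick of $W_{1}$ have nonzero total weight. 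Concretely this reduces to a combinatorial statement: a sufficiently large array of elements of $\Gamma$ in which no $t\times t$ contiguous sub-array is identically $0$ admits a choice of dividing lines, subject to the constraints inherited from the wall structure, partitioning a large sub-array into $t\times t$ contiguous cells of nonzero sum; from this $W_{1}$ is read off.

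The hard part is this last step. When $\Gamma=\mbz/2\mbz$ it suffices that nonzero bricks are dense, since one may then take cells of odd area; for a general abelian group one must additionally prevent the group elements inside a cell from cancelling, and it is exactly here that the combinatorics of walls and the use of local reroutings must be handled with care. A secondary, purely technical, point is the reduction argument of the first paragraph, which must be set up so as to correctly relate $3$-blocks of $(D,\gamma)$, $3$-blocks of $(D^{*},\gamma)$, and the face structure of the disk.
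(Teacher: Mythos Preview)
Your proposal has a genuine gap in the ``hard case.'' The combinatorial statement you reduce to --- that a large array of group elements with no identically-zero $t\times t$ sub-array admits a partition into cells of nonzero sum, subject to the constraints coming from the wall structure --- is not proved, and you yourself flag it as the hard part. For general abelian $\Gamma$ it is not clear this statement is true under those constraints, and in any case it would require a substantial argument that you have not supplied. Your dichotomy (some compact $t$-subwall is all-zero vs.\ every compact $t$-subwall contains a nonzero face) is set up so that the second branch gives you only scattered nonzero faces, with no control over cancellation; this is exactly why the endgame is hard.

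The paper sidesteps this difficulty entirely by a different dichotomy. Instead of testing all compact $t$-subwalls, it fixes a single \emph{sparse} $t$-subwall $W_2$ of $W$ (taking every $(t+2)$-th row and column), so that each brick $B_{i,j}$ of $W_2$ encloses a full $t$-subwall $W_{i,j}$ of $W$, with certifying separation $(C_{i,j},D_{i,j})$. The dichotomy is then: either some specific $W_{i,j}$ satisfies (ii), or the $3$-block of every $(D_{i,j},\gamma)$ contains a nonzero cycle $O_{i,j}$. In the second branch no global combinatorics is needed. For each brick separately, take the vertical segment $P_{i,j}$ of $W_2$ bounding that brick on one side, link three of its vertices to $O_{i,j}$ by disjoint paths inside $D_{i,j}$, and apply Lemma~\ref{threepathscyclelemma} to produce a replacement path $R_{i,j}$ with the same endpoints as $P_{i,j}$ but $\gamma(R_{i,j})\neq\gamma(P_{i,j})$. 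Processing the bricks of $W_2$ in lexicographic order and swapping $P_{i,j}$ for $R_{i,j}$ whenever the current brick has weight $0$ yields a facially $\Gamma$-odd $t$-wall by local reroutings; since the $D_{i,j}$ are pairwise disjoint, the reroutings do not interfere.

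The idea you are missing is this two-level structure: by making the bricks of the target wall large enough that each one individually contains a candidate for outcome (ii), the negation of (ii) hands you one nonzero cycle per brick, localised exactly where you need it, and the rerouting becomes a purely local operation with no interaction between bricks. Your face-weight bookkeeping in $D^{*}$ and the claimed equivalence with $3$-block bipartiteness are then unnecessary (and the ``only if'' direction of that equivalence is in any case not obviously correct, since a nonzero face of $D^{*}$ could sit inside a $B$-bridge rather than in the $3$-block).
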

\begin{proof}
Let $(W_2,\gamma)$ denote the $t$-subwall of $(W,\gamma)$ contained in the union of the $((t+2)i-t)$-th horizontal and vertical paths of $(W,\gamma)$, $i \in [t+1]$.
Note that $(W_2,\gamma)$ is flat and 1-contained in $(W,\gamma)$ since $(t+2)(t+1)-t < (t+2)^2$.
For $i,j\in[t+1]$, let $Q_i^{(h)}$ and $Q_j^{(v)}$ denote the $i$-th horizontal path and the $j$-th vertical path of $W_2$ respectively, and for $i,j\in[t]$ let $B_{i,j}$ denote the $(i,j)$-th brick of $W_2$.
Then the union of the $t+3$ horizontal and vertical paths of $W$ intersecting $B_{i,j}$ contains a $(t+2)$-subwall of $W$, which 1-contains a flat $t$-subwall $W_{i,j}$ of $W$. 
Let $(C_{i,j},D_{i,j})$ be a certifying separation for $W_{i,j}$ minimizing $|V(D_{i,j})|$.
Then $D_{i,j}$ is disjoint from $D_{i',j'}$ for $(i,j)\neq (i',j')$.

If for some $i,j\in[t]$, the 3-block of $(D_{i,j},\gamma)$ containing the degree 3 vertices of $(W_{i,j},\gamma)$ is $\Gamma$-bipartite, then $(W_{i,j},\gamma)$ and $(C_{i,j},D_{i,j})$ satisfies outcome (ii).
So we may assume that the 3-block of $(D_{i,j},\gamma)$ containing the degree 3 vertices of $(W_{i,j},\gamma)$ contains a $\Gamma$-nonzero cycle $O_{i,j}$ for all $i,j \in [t]$. 

For $i,j\in [t]$, let $P_{i,j}$ denote the subpath of $Q_{j+1}^{(v)}$ that is a $Q_i^{(h)}$-$Q_{i+1}^{(h)}$-path.
Let $H_{i,j}$ be the union of $D_{i,j}$, $P_{i,j}$, and the subpaths of horizontal paths of $W$ that are $W_{i,j}$-$P_{i,j}$-paths.
Then there are three disjoint paths from the interior of $P_{i,j}$ to $O_{i,j}$ in $H_{i,j}$, so by Lemma \ref{threepathscyclelemma}(b), there is a path $R_{i,j}$ in $H_{i,j}$ having the same endpoints as $P_{i,j}$ such that $\gamma(P_{i,j})\neq \gamma(R_{i,j})$.
Note that replacing $P_{i,j}$ with $R_{i,j}$ yields a local rerouting of $(W_2,\gamma)$.

We then obtain a facially $\Gamma$-odd $t$-wall $(W_1,\gamma)$ from $(W_2,\gamma)$ by a sequence of local reroutings where, for each $(i,j) \in [t]^2$ in lexicographic order, we replace $P_{i,j}$ with $R_{i,j}$ if necessary to make the $(i,j)$-th brick $\Gamma$-nonzero.
\end{proof}

\begin{lemma}
\label{flatwalllemma}
Let $r \geq 4$ be an integer and let $\Gamma$ be an abelian group.
Let $(G,\gamma)$ be a $\Gamma$-labelled graph containing a flat $(150r^{12}+2)^2$-wall $(W,\gamma)$.
Then one of the following outcomes hold:
\begin{enumerate}
	\item[(1)]
	There is a flat $50r^{12}$-wall $(W_1,\gamma)$ such that $\mct_{W_1}$ is a truncation of $\mct_W$ and either
	\begin{enumerate}
		\item[(i)]
		$(W_1,\gamma)$ is facially $\Gamma$-odd, or
		\item[(ii)] 
		$(W_1,\gamma)$ is strongly $\Gamma$-bipartite and there is a pure $\Gamma$-odd linkage of $(W_1,\gamma)$ of size $r$.
	\end{enumerate}
	\item[(2)]
	There exists $Z \subseteq V(G)$ with $|Z| < 50r^{12}$ such that the $\mct_W$-large 3-block of $(G-Z,\gamma)$ is $\Gamma$-bipartite.
\end{enumerate}
\end{lemma}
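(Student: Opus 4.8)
The plan is to apply Lemma \ref{walllemma1} first and then analyse its two outcomes. Put $t = 150r^{12}$; since $r\ge 4$ we have $t\ge 4$ and $(t+2)^2=(150r^{12}+2)^2$, so Lemma \ref{walllemma1} applied to the flat $(150r^{12}+2)^2$-wall $(W,\gamma)$ yields a flat $150r^{12}$-wall $(W',\gamma)$ with a certifying separation $(C',D')$ such that $\mct_{W'}$ is a truncation of $\mct_W$ and either $(W',\gamma)$ is facially $\Gamma$-odd, or the $3$-block $(B,\gamma)$ of $(D',\gamma)$ containing the degree-$3$ vertices of $W'$ is $\Gamma$-bipartite. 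In the first case I would pass to a compact, $1$-contained $50r^{12}$-subwall $(W_1,\gamma)$ of $(W',\gamma)$ (which exists since $50r^{12}\le 150r^{12}-2$); its bricks are bricks of $W'$ so it is facially $\Gamma$-odd, a subwall of a flat wall is flat so it has a certifying separation $(C_1,D_1)$, and $\mct_{W_1}$ is a truncation of $\mct_{W'}$ and hence of $\mct_W$. Its natural corners and nails, coming from $1$-containedness, supply the top nails $N_1$. This is outcome (1)(i).

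So assume the $3$-block $(B,\gamma)$ is $\Gamma$-bipartite. Since $B$ has at least three vertices and no two-vertex subset of $D'$ separates two vertices of $B$, any two vertices of $B$ are joined by three internally disjoint paths in $D'$, which lift along the added edges to three internally disjoint paths in $(B,\gamma)$; hence $(B,\gamma)$ is $3$-connected, and by Lemma \ref{lemma3connshiftequivalent} it is shift-equivalent to $(B,\bm 0)$. Performing the same sequence of vertex-shifts in $(G,\gamma)$ makes every $B$-path of $(G,\gamma)$ have weight $0$ --- the effect of a vertex-shift on the weight of a $B$-path matches its effect on the corresponding edge of the $3$-block --- and since shifting preserves cycle weights we may assume this holds. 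Now take any compact, $1$-contained $50r^{12}$-subwall $(W_1,\gamma)$ of $(W',\gamma)$ with top nails $N_1$; its branch vertices $b(W_1)$ are degree-$3$ vertices of $W'$, hence lie in $B$, so every $b(W_1)$-path, being a concatenation of $B$-paths, has weight $0$. Thus $(W_1,\gamma)$ is a $\Gamma$-bipartite wall; it is flat with a certifying separation $(C_1,D_1)$, and $\mct_{W_1}$ is a truncation of $\mct_W$.

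Next I would apply Theorem \ref{nzapathslemma} to $(G-(W_1-N_1),\gamma)$ with $A=N_1$ and $k=r^3$. If it returns $r^3$ disjoint nonzero $N_1$-paths, these form a linkage of $(W_1,\gamma)$, and Lemma \ref{purelinkagelemma}, applied with $N_1$ linearly ordered along the top horizontal path of $W_1$, extracts a pure sub-linkage $\mcl$ with $|\mcl|=r$. This linkage is $\Gamma$-odd: for $P\in\mcl$ with endpoints $n,n'\in N_1$, closing $P$ through the segment of the top horizontal path of $W_1$ between $n$ and $n'$ --- a concatenation of weight-$0$ $b(W_1)$-paths --- gives a cycle of weight $\gamma(P)\ne 0$ inside $W_1\cup P$. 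Hence outcome (1)(ii) holds. Otherwise Theorem \ref{nzapathslemma} gives $X\subseteq V(G)\setminus(V(W_1)\setminus N_1)$ with $|X|\le 50r^{12}-4$ such that $(G-(W_1-N_1)-X,\gamma)$ has no nonzero $N_1$-path, and I would set $Z:=X$, so $|Z|<50r^{12}$, and aim to show that the $\mct_W$-large $3$-block $(B^{*},\gamma)$ of $(G-Z,\gamma)$ is $\Gamma$-bipartite, which is outcome (2).

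The last step is where the real work lies, and I expect it to be the main obstacle. Morally the argument is: if $(B^{*},\gamma)$ contained a $\Gamma$-nonzero cycle, witnessed by a cycle $C'$ in $G-Z$, then, since $W'$ is vastly larger than $|Z|$, at least three top nails of $W_1$ survive in $B^{*}\setminus Z$; using that $B^{*}$ is a $3$-block, hence $3$-connected in $G-Z$, together with the grid of $W'$ surrounding $W_1$ as a routing device, one would produce three pairwise disjoint paths in $G-(W_1-N_1)-Z$ from $V(C')$ to these three nails, and then Lemma \ref{threepathscyclelemma} would yield a nonzero $N_1$-path in $G-(W_1-N_1)-Z$, contradicting the choice of $Z$ (this mirrors the clique case of Lemma \ref{oddmodellemma}, but with the added requirement of avoiding the interior of $W_1$). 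The genuinely delicate case is when $C'$ meets $B^{*}$ in only one or two vertices, so that fewer than three disjoint paths leave $C'$; there one must argue instead that the attachment vertices of $C'$, lying in $B^{*}$, are so highly connected to $N_1$ through the huge wall $W'$ that $Z$ being a hitting set for nonzero $N_1$-paths already forces $Z$ to meet $C'$, contradicting that $C'$ survives in $G-Z$. Throughout, one must carry the certifying separations, the $V(C'\cap D')$-reduction witnessing flatness, and the $\Gamma$-bipartiteness of $B$ supplied by Lemma \ref{walllemma1}, which together confine the nonzero cycles of $(G-Z,\gamma)$ to a part of the graph controllable by a set of size less than $50r^{12}$; this bookkeeping is the bulk of the technical content.
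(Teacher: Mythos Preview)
Your proposal tracks the paper through the application of Lemma \ref{walllemma1}, the shifting argument, and the invocation of Theorem \ref{nzapathslemma} with $k=r^3$ to obtain either outcome (1)(ii) via Lemma \ref{purelinkagelemma} or a hitting set $Z$ with $|Z|<50r^{12}$. The first genuine gap is your choice of $(W_1,\gamma)$ as \emph{any} $1$-contained $50r^{12}$-subwall of $(W',\gamma)$: the paper instead takes $(W_1,\gamma)$ to be $50r^{12}$-\emph{contained} in the $150r^{12}$-wall, so that at least $50r^{12}$ horizontal and $50r^{12}$ vertical paths of $W'$ lie on each side of $W_1$. Since $|Z|<50r^{12}$, this guarantees a horizontal path of $W'$ above $W_1$, one below, and a vertical path on each side, all disjoint from $Z$ and from the perimeter of $W'$; their union contains a cycle $O$ surrounding $W_1$, together with two disjoint $O$-$N_1$ connector paths $P_1,P_2\subseteq W'$ avoiding $Z$. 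With a merely $1$-contained subwall your ``routing device'' need not survive after deleting $Z$.

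The second gap is the missing structural step that replaces your vague final paragraph. Let $W^\circ$ be the compact subwall of $W'$ bounded by $O$ and choose a certifying separation $(C^\circ,D^\circ)$ for it minimizing $|V(C^\circ\cap D^\circ)\setminus V(B)|$; one checks $V(C^\circ\cap D^\circ)\subseteq V(B)$, since a vertex of $O$ outside $B$ lies in a $B$-bridge with two attachments on $O$ which can be absorbed into $D^\circ$. Now a nonzero cycle $S$ in $G-Z$ witnessing a simple nonzero cycle of $(B^{*},\gamma)$ cannot lie in $D^\circ$: three disjoint $V(C^\circ\cap D^\circ)$-$V(S)$-paths in $D^\circ$ exist (via $b(W_1)$), and Lemma \ref{threepathscyclelemma} would give a nonzero $V(C^\circ\cap D^\circ)$-path in $D^\circ\subseteq D'$, impossible since all $V(B)$-paths there have weight $0$. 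It cannot lie in $C^\circ$ either: three disjoint $V(C^\circ\cap D^\circ)$-$V(S)$-paths in $C^\circ-Z$ plus Lemma \ref{threepathscyclelemma} give a nonzero $V(C^\circ\cap D^\circ)$-path in $C^\circ-Z$, which extends along $O\cup P_1\cup P_2$ (weight-$0$ segments) to a nonzero $N_1$-path avoiding $V(D_1)\setminus N_1$ and $Z$, contradicting the choice of $Z$. If $S$ crosses, its $D^\circ$-segments have weight $0$, forcing a nonzero $V(C^\circ\cap D^\circ)$-path in $C^\circ$ and the same contradiction. Your ``delicate case'' of $S$ meeting $B^{*}$ in at most two vertices never occurs: $S$ corresponds to a \emph{simple} cycle of the $3$-block, hence has at least three vertices in $B^{*}$; the real dichotomy is inside versus outside $D^\circ$.
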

\begin{proof}
Applying Lemma \ref{walllemma1} with $t=150r^{12}$, we obtain a flat $150r^{12}$-wall $(W_0,\gamma)$ with top nails $N_0$ and certifying separation $(C_0,D_0)$ satisfying the conclusion of Lemma \ref{walllemma1}.
If $(W_0,\gamma)$ is facially $\Gamma$-odd, then outcome (1)-(i) is satisfied by taking a compact $50r^{12}$-subwall.
So we may assume that the 3-block $(B_0,\gamma_0)$ of $(D_0,\gamma)$ containing the vertices of degree 3 in $(W_0,\gamma)$ is $\Gamma$-bipartite.
Since $W_0$ is a $150r^{12}$-wall and $r\geq 4$, we have $|V(B_0)|\geq 4$, hence $B_0$ is 3-connected.
By Lemma \ref{lemma3connshiftequivalent}, we may assume by possibly shifting in $(G,\gamma)$ that 
\begin{equation} \label{eqn:wall3block0}
	\text{every $V(B_0)$-path in $(D_0,\gamma)$ is $\Gamma$-zero.} \tag{$\dagger$} 
\end{equation}
Also note that given any 1-contained subwall $W'$ of $W_0$ with the choice of nails and corners with respect to $W_0$, its branch vertices all have degree 3 in $W_0$, so $b(W') \subseteq V(B_0)$ and every $b(W')$-path in $(D_0,\gamma)$ is $\Gamma$-zero by \eqref{eqn:wall3block0}.

Since $(W_0,\gamma)$ is a flat $150r^{12}$-wall, it $50r^{12}$-contains a flat $50r^{12}$-subwall $(W_1,\gamma)$.
Let $N_1$ denote its top nails with respect to $(W_0,\gamma)$ and let $(C_1,D_1)$ be a certifying separation for $(W_1,\gamma)$ such that $|V(D_1)|$ is minimized.
Note that $D_1 \subseteq D_0$ and hence, by \eqref{eqn:wall3block0}, every path in $(D_1,\gamma)$ between branch vertices of $(W_1,\gamma)$ is $\Gamma$-zero.
In particular, $(W_1,\gamma)$ is strongly $\Gamma$-bipartite.

If there exist $r^3$ disjoint $\Gamma$-nonzero $N_1$-paths in $(G-(V(D_1)-N_1),\gamma)$, then by Lemma \ref{purelinkagelemma}, there is a pure linkage $\mcp$ of $(W_1,\gamma)$ whose paths are $\Gamma$-nonzero.
Since every $N_1$-path in $(D_1,\gamma)$ is $\Gamma$-zero, $\mcp$ is $\Gamma$-odd and outcome (1)-(ii) is satisfied.
So by Theorem \ref{nzapathslemma}, we may assume that there exists $Z \subseteq V(G-(V(D_1)-N_1))$ with $|Z| \leq 50r^{12}-3$ such that $(G-(V(D_1)-N_1)-Z,\gamma)$ does not contain a $\Gamma$-nonzero $N_1$-path.

Since $|Z| \leq 50r^{12}-3$ and $W_1$ is $50r^{12}$-contained in $W_0$, there are two vertical paths of $W_0$, one to the left of $W_1$ and one to the right, and two horizontal paths of $W_0$, one above $W_1$ and one below, such that the four paths are all disjoint from $Z$ and not contained in the perimeter of $W_0$.
Let $O$ denote the unique cycle in the union of these four paths.
See Figure \ref{FigFlatWall}.
Since $W_1$ is a $50r^{12}$-wall, there are two disjoint $V(O)$-$N_1$-paths $P_1$ and $P_2$ that are subpaths of vertical paths of $W_0$ disjoint from $Z$.
Note that $\gamma(P_1)=\gamma(P_2)=0$ since the endpoints of each $P_i$ have degree 3 in $W_0$.

\begin{figure}
	\centering
\begingroup%
  \makeatletter%
  \providecommand\color[2][]{%
    \errmessage{(Inkscape) Color is used for the text in Inkscape, but the package 'color.sty' is not loaded}%
    \renewcommand\color[2][]{}%
  }%
  \providecommand\transparent[1]{%
    \errmessage{(Inkscape) Transparency is used (non-zero) for the text in Inkscape, but the package 'transparent.sty' is not loaded}%
    \renewcommand\transparent[1]{}%
  }%
  \providecommand\rotatebox[2]{#2}%
  \newcommand*\fsize{\dimexpr\f@size pt\relax}%
  \newcommand*\lineheight[1]{\fontsize{\fsize}{#1\fsize}\selectfont}%
  \ifx\svgwidth\undefined%
    \setlength{\unitlength}{227.45196853bp}%
    \ifx\svgscale\undefined%
      \relax%
    \else%
      \setlength{\unitlength}{\unitlength * \real{\svgscale}}%
    \fi%
  \else%
    \setlength{\unitlength}{\svgwidth}%
  \fi%
  \global\let\svgwidth\undefined%
  \global\let\svgscale\undefined%
  \makeatother%
  \begin{picture}(1,0.60431202)%
    \lineheight{1}%
    \setlength\tabcolsep{0pt}%
    \put(0,0){\includegraphics[width=\unitlength,page=1]{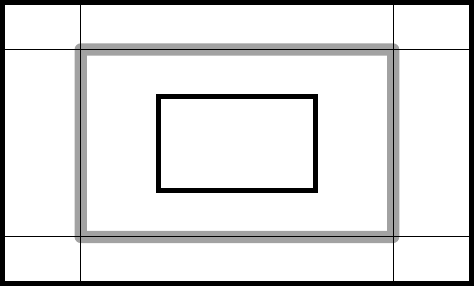}}%
    \put(0.02200983,0.28541886){\color[rgb]{0,0,0}\makebox(0,0)[lt]{\lineheight{1.25}\smash{\begin{tabular}[t]{l}$W_0$\end{tabular}}}}%
    \put(0.3509592,0.28570601){\color[rgb]{0,0,0}\makebox(0,0)[lt]{\lineheight{1.25}\smash{\begin{tabular}[t]{l}$W_1$\end{tabular}}}}%
    \put(0.18725168,0.28570059){\color[rgb]{0,0,0}\makebox(0,0)[lt]{\lineheight{1.25}\smash{\begin{tabular}[t]{l}$W^\circ$\end{tabular}}}}%
    \put(0.84722098,0.28490446){\color[rgb]{0,0,0}\makebox(0,0)[lt]{\lineheight{1.25}\smash{\begin{tabular}[t]{l}$O$\end{tabular}}}}%
    \put(0,0){\includegraphics[width=\unitlength,page=2]{FigFlatWalls.pdf}}%
    \put(0.42745728,0.43405199){\color[rgb]{0,0,0}\makebox(0,0)[lt]{\lineheight{1.25}\smash{\begin{tabular}[t]{l}$P_1$\end{tabular}}}}%
    \put(0.59232723,0.43405199){\color[rgb]{0,0,0}\makebox(0,0)[lt]{\lineheight{1.25}\smash{\begin{tabular}[t]{l}$P_2$\end{tabular}}}}%
  \end{picture}%
\endgroup%

	\caption{$W_0$ is a flat $150r^{12}$-wall which $50r^{12}$-contains a compact $50r^{12}$-subwall $W_1$. The cycle $O$ (highlighted in grey) around $W_1$ is disjoint from $Z$ and forms the perimeter of a compact subwall $W^\circ$ of $W_0$.
		The two $V(O)$-$N_1$-paths $P_1$ and $P_2$ are also disjoint from $Z$.}
	\label{FigFlatWall}
\end{figure}

Let $W^\circ$ denote the compact subwall of $W_0$ whose perimeter is $O$, and let $(C^\circ, D^\circ)$ be a certifying separation for $W^\circ$ in $G$ minimizing $|V(C^\circ \cap D^\circ) - V(B_0)|$.
Note that $W_1 \subseteq W^\circ \subseteq D^\circ \subseteq D_0$ and $V(C^\circ\cap D^\circ) \subseteq V(O)$.
We claim that $V(C^\circ \cap D^\circ) \subseteq V(B_0)$.
Suppose otherwise and let $v \in V(C^\circ \cap D^\circ) - V(B_0)$.
Then $v$ is contained in the interior of a $V(B_0)$-bridge $\mcb$ of $D_0$ with two attachments, say $a_1,a_2 \in V(B_0) \cap V(O)$.
But then $(C^\circ - (\mcb - \{a_1,a_2\}), D^\circ \cup \mcb)$ is also a certifying separation for $W^\circ$ with fewer vertices in $V(C^\circ \cap D^\circ) - V(B_0)$, contradicting our choice of $(C^\circ,D^\circ)$.
It follows from \eqref{eqn:wall3block0} that every $V(C^\circ \cap D^\circ)$-path in $(D^\circ,\gamma)$ is $\Gamma$-zero.

We now show that outcome (2) is satisfied.
Let $(B,\gamma_B)$ be the $\mct_{W_1}$-large 3-block of $(G-Z,\gamma)$. 
Note that $B$ is 3-connected and contains all vertices of degree 3 in $W_1$.
Suppose contrary to outcome (2) that $(B,\gamma_B)$ contains a simple $\Gamma$-nonzero cycle $S'$ (using Lemma \ref{lemma3connshiftequivalent}), and let $S$ be a $\Gamma$-nonzero cycle in $(G-Z,\gamma)$ corresponding to $S'$ as given by Proposition \ref{prop:3blockpathcycle}.

We claim that $S \not\subseteq D^\circ$.
Otherwise, there exist three disjoint $V(C^\circ \cap D^\circ)$-$V(S)$-paths in $D^\circ$, since $S$ is 3-connected to $b(W_1)$ which is in turn highly connected to $V(C^\circ \cap D^\circ)$. By Lemma \ref{lem:threeACpathsnonzero}, we obtain a $\Gamma$-nonzero $V(C^\circ \cap D^\circ)$-path in $(D^\circ,\gamma)$, a contradiction.

We also claim that $S \not\subseteq C^\circ$.
Otherwise, there exist three disjoint $V(C^\circ \cap D^\circ)$-$V(S)$-paths in $(C^\circ-Z,\gamma)$ (since there are three disjoint paths from $S$ to $b(W_1)$ which must go through $V(C^\circ \cap D^\circ)$).
By Lemma \ref{lem:threeACpathsnonzero}, there exists a $\Gamma$-nonzero $V(C^\circ \cap D^\circ)$-path in $C^\circ$.
Extending the endpoints of this path along $O \cup P_1\cup P_2$, we obtain a $\Gamma$-nonzero $N_1$-path in $(G-(V(D_1)-N_1)-Z,\gamma)$, a contradiction.

Therefore, $S$ intersects both $C^\circ -  D^\circ$ and $D^\circ - C^\circ$, so the edges of $S$ can be partitioned into a number of $V(C^\circ \cap D^\circ)$-paths, each contained in either $C^\circ$ or $D^\circ$.
Those in $D^\circ$ are all $\Gamma$-zero by \eqref{eqn:wall3block0}, so $C^\circ$ contains a $\Gamma$-nonzero $V(C^\circ \cap D^\circ)$-path.
This similarly gives a contradictory $\Gamma$-nonzero $N_1$-path in $(G-(V(D_1)-N_1)-Z,\gamma)$, and therefore outcome (2) holds.
\end{proof}

\section{Proof of Theorem \ref{flatwallundirectedtheorem}}
\label{sec:mainproof}

The proof of Theorem \ref{flatwallundirectedtheorem}, restated below, now follows readily from Lemma \ref{oddmodellemma} and Lemma \ref{flatwalllemma}.
\flatwallundirectedtheoremres*

\begin{proof}
Let $r'=(150r^{12}+2)^2$ and $t'=R_4(t,m(t))$ where $m(t)=50(150t^4)^4+1+300t^4$.
Let $g:\mbn\times\mbn\to\mbn$ be a function such that $g(r,t)\geq F(r',t')$ where $F$ is the function from Theorem \ref{flatwalltheorem}, and let $h(r,t)=t' + 50(150t^4)^4 + 50r^{12}$.
Note that we may assume $g(r,t) \ge h(r,t)+3$.

Suppose $(G,\gamma)$ contains an $g(r,t)$-wall $W$.
By Theorem \ref{flatwalltheorem}, either $(G,\gamma)$ contains a $K_{t'}$-model $\pi$ such that $\mct_\pi$ is a truncation of $\mct_W$ or there exists $X \subseteq V(G)$ with $|X| \leq t'-5$ and an $r'$-subwall $W'$ of $W$ that is disjoint from $X$ and flat in $G-X$.

Suppose we are in the first case, that there is a $K_{R_4(t,m(t))}$-model $\pi$ such that $\mct_\pi$ is a truncation of $\mct_W$. By Lemma \ref{oddmodellemma}, either there is a $\Gamma$-odd $K_t$-model $\mu$ in $G$ such that $\mct_\mu$ is a truncation of $\mct_\pi$ (hence of $\mct_W$), or there exists $Y \subseteq V(G)$ with $|Y| < 50(150t^4)^4$ such that the $\mct_\pi$-large 3-block of $(G-Y,\gamma)$ is $\Gamma$-bipartite.
Since $\mct_\pi$ is a truncation of $\mct_W$, this 3-block is also $\mct_W$-large.
The first outcome satisfies (1).
The second outcome satisfies (3) with $Z=Y$.

Now suppose we are in the second case, that there exists $X \subseteq V(G)$ with $|X|\leq t'-5$ and a flat $(150r^{12}+2)^2$-wall $(W',\gamma)$ in $(G-X,\gamma)$ such that $\mct_{W'}$ is a truncation of $\mct_W$.
If there exists $Y \subseteq V(G-X)$ with $|Y| < 50r^{12}$ such that the $\mct_{W'}$-large 3-block of $(G-X-Y,\gamma)$ is $\Gamma$-bipartite, then this 3-block is also $\mct_W$-large, so (3) is satisfied with $Z=X\cup Y$.
Otherwise, by Lemma \ref{flatwalllemma}, there is a flat $50r^{12}$-wall $(W_1,\gamma)$ such that $\mct_{W_1}$ is a truncation of $\mct_W$ and either $(W_1,\gamma)$ is facially $\Gamma$-odd or $(W_1,\gamma)$ is strongly $\Gamma$-bipartite and there is pure $\Gamma$-odd linkage of $(W_1,\gamma)$ of size $r$.
These two outcomes satisfy (2)-(a) and (2)-(b) respectively.
\end{proof}

\subsection*{Acknowledgements}
We thank the anonymous referees for their careful reading of the paper and the numerous helpful comments.

\end{document}